\providecommand{\U}[1]{\protect\rule{.1in}{.1in}}
\newtheorem{theorem}{Theorem}[section]
\newtheorem{proposition}[theorem]{Proposition}
\newtheorem{lemma}[theorem]{Lemma}
\newtheorem{corollary}[theorem]{Corollary}
\theoremstyle{definition}
\newtheorem{definition}[theorem]{Definition}
\newtheorem{remark}[theorem]{Remark}
\newcommand{\ie}{i.e.\ }
\newcommand{\CSM}{\mathfrak{OB}}
\newcommand{\CM}{\mathfrak{MB}}
\newcommand{\GP}{\mathcal{S}}
\newcommand{\MG}{\mathrm{PSL}(2,\mathbb{Z})}
\newcommand{\PSL}{\mathrm{PSL}}
\newcommand{\PGL}{\mathrm{PGL}}
\newcommand{\GL}{\mathrm{GL}}
\newcommand{\Rep}{\mathrm{Hom}}
\newcommand{\NT}{\Sigma_\lambda}
\newcommand{\PP}{\mathbf{P}}
\newcommand{\IR}{\mathcal{R}^{\circ}}
\newcommand*{\interior}[1]{{#1}^{\circ}}
\newcommand*{\otop}[1]{\check{#1}}
\newcommand*{\trans}[1]{{}^t\!{#1}}
\newcommand{\GG}{\mathscr{G}}
\newcommand{\HH}{\mathscr{H}}
\newcommand{\FF}{\mathscr{F}}
\numberwithin{equation}{section}
\begin{document}

\title[Pappus Theorem, Schwartz Representations and Anosov Representations]{Pappus Theorem, Schwartz Representations \\
and Anosov Representations}

\author{Thierry Barbot}
\address{Universit\'e d'Avignon et des Pays de Vaucluse, Laboratoire de Math\'ematiques, Campus Jean-Henri Fabre, 301, rue Baruch de Spinoza, BP 21239, 84 916 Avignon Cedex 9 France}

\author{Gye-Seon Lee}
\address{Mathematisches Institut, Universit{\"a}t Heidelberg, Im Neuenheimer Feld 205, 69120 Heidelberg, Germany}

\author{Viviane Pardini Val\'erio}
\address{Universidade Federal de S{\~ a}o Jo{\~ a}o del Rei, Departamento de Matem\'atica e Estat\'istica. Pra\c{c}a Frei Orlando, 170, Centro, CEP: 36307-352. S{\~ a}o Jo{\~ a}o del Rei, Minas Gerais, Brazil}

\begin{abstract}
In the paper \textit{Pappus's theorem and the modular group}, R. Schwartz constructed a $2$-dimensional family of faithful representations $\rho_\Theta$ of the modular group $\MG$ into the group $\GG$ of projective symmetries of the projective plane via Pappus Theorem. The image of the unique index $2$ subgroup $\MG_o$ of $\MG$ under each representation $\rho_\Theta$ is in the subgroup $\PGL(3,\mathbb{R})$ of $\GG$ and preserves a topological circle in the flag variety, but $\rho_\Theta$ is \emph{not} Anosov. In her PhD Thesis, V. P. Val\'erio elucidated the Anosov-like feature of Schwartz representations: For every $\rho_\Theta$, there exists a $1$-dimensional family of Anosov representations $\rho^\varepsilon_{\Theta}$ of $\MG_o$ into $\PGL(3,\mathbb{R})$ whose limit is the restriction of $\rho_\Theta$ to $\MG_o$. In this paper, we improve her work: For each $\rho_\Theta$, we build a $2$-dimensional family of Anosov representations of $\MG_o$ into $\PGL(3,\mathbb{R})$ containing $\rho^\varepsilon_{\Theta}$ and a $1$-dimensional subfamily of which can extend to representations of $\MG$ into $\GG$. Schwartz representations are therefore, in a sense, the limits of Anosov representations of $\MG$ into $\GG$.
\end{abstract}

\keywords{Pappus Theorem, modular group, group of projective symmetries, Farey triangulation, Schwartz representation, Gromov-hyperbolic group, Anosov representation, Hilbert metric}

\subjclass[2010]{37D20, 37D40, 20M30, 22E40, 53A20}

\maketitle


\section{Introduction}
\label{sec:intro}

The initial goal of this work is to understand the similarity between Schwartz representations $\rho_{\Theta}$ of the modular group $\MG$ into the group $\GG$ of projective symmetries, presented in Schwartz \cite[Theorem 2.4]{SCHWARTZ}, and Anosov representations of Gromov-hyperbolic groups, which were studied by Labourie \cite{LABO} and Guichard--Wienhard \cite{GUICHA}.

The starting point is a classical theorem due to Pappus of Alexandria (290 AD - 350 AD) known as Pappus's (hexagon) theorem (see Figure \ref{fig:Pappus}). As said by Schwartz, a slight twist makes this old theorem new again. This twist is to iterate, and thereby Pappus Theorem becomes a dynamical system. An important insight of Schwartz was to describe this dynamic through objects named by him marked boxes. A marked box $[\Theta]$ is simply a collection of points and lines in the projective plane $\PP(V)$ obeying certain rules (see Section \ref{markedbox}). When the Pappus theorem is applied to a marked box, more points and lines are produced, and so on.

\begin{figure}[ht!]
\centering
\includegraphics[scale=0.6]{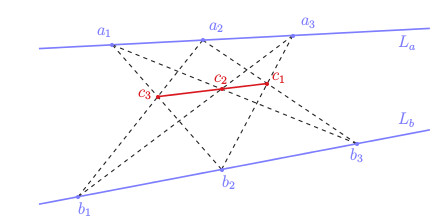}
\caption{\textbf{Pappus Theorem}: If the points $a_1$, $a_2$, $a_3$ are collinear and the points $b_1$, $b_2$, $b_3$ are collinear, then the points $c_1$, $c_2$, $c_3$ are also collinear.}
\label{fig:Pappus}
\end{figure}

The dynamics on the set $\CM$ of marked boxes come from the actions of two special groups $\GG$ and $\mathfrak{G}$. The group $\GG$ of projective symmetries is the group of transformations of the flag variety $\FF$, \ie the group $\GG$ is generated by projective transformations and dualities. The action of $\GG$ on $\CM$ is essentially given by the fact that a marked box is characterized by a collection of flags in $\FF$. The group $\mathfrak{G}$ of elementary transformations of marked boxes is generated by a natural involution $i$, and transformations $\tau_1$, $\tau_2$ induced by Pappus Theorem (see Section \ref{grupoelem}). The group $\mathfrak{G}$ is isomorphic to the modular group $\MG$.

Another Schwartz insight was that for each convex marked box $[\Theta]$, there exists another action of the modular group on the $\mathfrak{G}$-orbit of $[\Theta]$, commuting with the action of $\mathfrak{G}.$ This action can be described in the following way: On the one hand, the isometric action
of the modular group $\MG$ on the hyperbolic plane $\mathbb{H}^2$ preserves the set $\mathcal{L}_o$ of Farey geodesics.  On the other hand, there exists a natural labeling on $\mathcal{L}_o$ by the elements of the $\mathfrak{G}$-orbit of $[\Theta]$. Hence the action of $\MG$ on labels induces an action on the $\mathfrak{G}$-orbit of $[\Theta].$
Moreover, this labeling allows us to better understand how the elements of the $\mathfrak{G}$-orbit of $[\Theta]$ are nested when viewed in the projective plane $\PP(V)$ (or when viewed in the dual projective plane $\PP(V^*)$).

Through these two actions on $\CM$, Schwartz showed that for each convex marked box $[\Theta]$, there exists a faithful representation $\rho_{\Theta} : \MG \to \GG$ such that for every $\gamma$ in $\MG$ and every Farey geodesic $e \in \mathcal{L}_o$, the label of $\gamma(e)$ is the image of the label of $e$ under $\rho_\Theta(\gamma)$ (see Theorem \ref{principalSch}).

As observed in Barbot \cite[Remark 5.13]{BARBOT}, the Schwartz representations $\rho_{\Theta}$, in their dynamical behavior, look like \emph{Anosov representations}, introduced by Labourie \cite{LABO} in order to study the Hitchin component of the space of representations of closed surface groups. Later, Guichard and Wienhard \cite{GUICHA} enlarged this concept to the framework of Gromov-hyperbolic groups, which allows us to define the notion of Anosov representations of $\MG$. Anosov representations currently play an important role in the development of higher Teichm\"uller theory (see e.g.  Bridgeman--Canary--Labourie--Sambarino \cite{BCS}).

In this paper, we show that Schwartz representations are not Anosov, but limits of Anosov representations. More precisely:

\begin{theorem}
\label{thm:main}
Let $\mathcal{R}$ be a region of $\mathbb{R}^2$ given by (\ref{eq:R}) (see Figure \ref{fig:region_R}) with interior $\IR$ and let $\MG_o$ denote the unique subgroup of index $2$ in $\MG$. Then for any convex marked box $[\Theta]$, there exists a two-dimensional family of representations
$$\rho^\lambda_{\Theta} \,:\, \MG_o \to \PGL(3,\mathbb{R}),$$
with $\lambda = (\varepsilon,\delta) \in \mathbb{R}^2$ such that:
\begin{enumerate}
\item If $\lambda = (0,0)$, then $\rho^\lambda_{\Theta}$ coincides with the restriction of the Schwartz representation $\rho_{\Theta}$ to $\MG_o$.
\item If $\lambda \in \mathcal{R}$, then $\rho^\lambda_{\Theta}$ is discrete and faithful.
\item If $\lambda \in \IR$, then $\rho^\lambda_{\Theta}$ is Anosov.
\end{enumerate}
\end{theorem}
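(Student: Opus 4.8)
The plan is to realise the representations $\rho^\lambda_\Theta$ as an explicit two-parameter deformation of the generators of $\MG_o$, to derive discreteness and faithfulness from a Klein ping-pong argument whose exact domain of validity is the region $\mathcal{R}$ of (\ref{eq:R}), and to upgrade this on $\IR$ to the Anosov property by exhibiting an equivariant hyperconvex limit curve in the flag variety $\FF$. Recall first that $\MG_o$ is isomorphic to the free product $\mathbb{Z}/3\mathbb{Z}\ast\mathbb{Z}/3\mathbb{Z}=\langle\sigma\rangle\ast\langle\tau\rangle$, where $\sigma$ and $\tau$ are the two natural order-three elements attached to the Farey triangulation, and the order-two generator $S$ of $\MG$ (not lying in $\MG_o$) conjugates $\sigma$ to $\tau$. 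By Theorem \ref{principalSch} the convex marked box $[\Theta]$ determines order-three transformations $\rho_\Theta(\sigma),\rho_\Theta(\tau)\in\PGL(3,\mathbb{R})$ and a duality $\rho_\Theta(S)\in\GG$. Working in projective coordinates adapted to $[\Theta]$, I would write down an explicit family of order-three elements $\sigma_\lambda,\tau_\lambda\in\PGL(3,\mathbb{R})$, $\lambda=(\varepsilon,\delta)$, with $\sigma_{(0,0)}=\rho_\Theta(\sigma)$ and $\tau_{(0,0)}=\rho_\Theta(\tau)$, extending Val\'erio's one-parameter family, and then \emph{define} $\rho^\lambda_\Theta$ to be the homomorphism $\MG_o\to\PGL(3,\mathbb{R})$ with $\sigma\mapsto\sigma_\lambda$ and $\tau\mapsto\tau_\lambda$. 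This is automatically well defined, since $\MG_o$ is the above free product and no relation beyond the order-three condition needs to be checked; item (1) is then immediate.

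For item (2) I would deform Schwartz's nested-boxes picture: each edge of the Farey triangulation carries a marked box, these boxes are nested in $\PP(V)$ according to the triangulation, and for $\lambda$ in a suitable region the deformed boxes still yield two compact subsets $K_\sigma,K_\tau$ of $\FF$ (equivalently of $\PP(V)\times\PP(V^*)$) with disjoint interiors such that $\sigma_\lambda^{\pm1}\bigl(\FF\setminus\mathrm{int}\,K_\sigma\bigr)\subseteq K_\sigma$ and $\tau_\lambda^{\pm1}\bigl(\FF\setminus\mathrm{int}\,K_\tau\bigr)\subseteq K_\tau$. The inequalities (\ref{eq:R}) defining $\mathcal{R}$ are precisely the closed conditions guaranteeing these inclusions; they degenerate to tangencies of the nesting along $\partial\mathcal{R}$, the Schwartz configuration $\lambda=(0,0)$ being such a boundary case, so the ping-pong lemma for free products of finite cyclic groups shows $\rho^\lambda_\Theta$ is discrete and faithful. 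For the extension I would look for a duality $C_\lambda\in\GG$ with $C_\lambda^2=\mathrm{id}$ and $C_\lambda\sigma_\lambda C_\lambda^{-1}=\tau_\lambda$ (which forces $C_\lambda\tau_\lambda C_\lambda^{-1}=\sigma_\lambda$ as well): representing $C_\lambda$ by a nondegenerate quadratic form conjugating $\tau_\lambda$ to the contragredient of $\sigma_\lambda$, the relation $C_\lambda^2=\mathrm{id}$ becomes the requirement that this form be symmetric, which is a single algebraic condition on $\lambda$ cutting out a curve through the origin; along it $\sigma\mapsto\sigma_\lambda$, $\tau\mapsto\tau_\lambda$, $S\mapsto C_\lambda$ extends $\rho^\lambda_\Theta$ to a representation $\MG\to\GG$.

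For item (3), when $\lambda\in\IR$ the ping-pong inclusions are strict and $K_\sigma\cap K_\tau=\emptyset$; intersecting the nested sets along infinite reduced words in $\sigma$ and $\tau$ then produces a continuous $\rho^\lambda_\Theta$-equivariant map $\xi_\lambda$ from the Cantor boundary $\partial_\infty\MG_o$ into $\FF$. The general-position and incidence properties of distinct marked boxes, which survive the deformation for $\lambda\in\IR$, translate into hyperconvexity of $\xi_\lambda$: for any three pairwise distinct boundary points the three point-components span $V$, and the point-component of a flag $\xi_\lambda(x)$ is transverse to the line-component of $\xi_\lambda(y)$ whenever $y\neq x$. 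Combined with the uniform contraction along the nested regions, equivalently an exponential gap between consecutive singular values of $\rho^\lambda_\Theta(\gamma)$ in the word length of $\gamma$, which the ping-pong supplies, this identifies $\xi_\lambda$ as the limit map of an Anosov representation by the characterisation of Anosov representations of Gromov-hyperbolic groups (Labourie \cite{LABO}, Guichard--Wienhard \cite{GUICHA}); equivalently, $\rho^\lambda_\Theta$ is convex cocompact in $\PP(V)$ for the Hilbert metric.

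The step I expect to be the real obstacle is the calibration behind item (2): one must choose the deformation $\lambda\mapsto(\sigma_\lambda,\tau_\lambda)$ so that it is genuinely two-dimensional, so that the ping-pong sets $K_\sigma,K_\tau$ exist over exactly $\mathcal{R}$ with the Schwartz representation appearing as the tangential, non-strictly-nested boundary configuration, and so that the limit curve $\xi_\lambda$ stays transverse over the whole Cantor boundary rather than developing coincident or incident flags in the limit. In practice this is a careful perturbation analysis of Schwartz's nested marked boxes, carried out simultaneously in $\PP(V)$ and the dual plane $\PP(V^*)$, and it is what produces the explicit shape of $\mathcal{R}$ in (\ref{eq:R}).
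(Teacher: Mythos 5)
Your overall strategy is the same as the paper's (deform the two order-three generators of $\MG_o\cong\mathbb{Z}/3\mathbb{Z}\ast\mathbb{Z}/3\mathbb{Z}$, get faithfulness/discreteness from the nesting of marked boxes, and get the Anosov property from an equivariant limit map plus uniform contraction), but the proposal stops short of the step that actually constitutes the theorem: you never produce the two-parameter deformation, and without it the statement about the \emph{specific} region $\mathcal{R}$ of (\ref{eq:R}) cannot even be formulated, let alone proved. In the paper the deformation is completely explicit: a projective map $\Sigma_{(\varepsilon,\delta)}$ written in the $\Theta$-basis is inserted into the elementary transformations, giving $i^\lambda=\sigma_\lambda i$, $\tau_k^\lambda=\sigma_\lambda\tau_k$, with the crucial identity $i^\lambda\tau_1^\lambda=i\tau_1=\varrho_1$; consequently $\rho^\lambda_\Theta(R_*)=\mathcal{A}^0_\Theta$ is \emph{not} deformed and only the second order-three generator moves, by conjugation $\mathcal{B}^\lambda_\Theta=\NT^{-1}\mathcal{B}^0_\Theta\NT$ (Lemma \ref{lemaprinc2}, Theorem \ref{principalSch3}). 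The region $\mathcal{R}$ is then computed as exactly the set of $\lambda$ for which $\interior{[\sigma_\lambda(\Theta)]}\subset\interior{[\Theta]}$, and this is what makes the labeled-lamination equivariance and the nesting persist. Your version, in which both $\sigma_\lambda$ and $\tau_\lambda$ are ``some'' order-three deformations, does define a homomorphism of the free product, so item (1) is fine abstractly, but the claim that ``the inequalities (\ref{eq:R}) are precisely the closed conditions guaranteeing these inclusions'' is asserted, not derived; you yourself flag this calibration as the real obstacle, and it is precisely the content the paper supplies.

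For item (3) there are further gaps in the route you sketch. The paper does not invoke hyperconvexity or singular-value-gap characterizations: it verifies Definition \ref{def:anosov} directly, building the limit maps by intersecting the nested convex interiors along nonwandering geodesics, getting uniform contraction from the Hilbert-metric distortion constant $C>1$ of Definition \ref{def:C} (which exists only because $\lambda\in\IR$ forces \emph{closure} containment of the boxes after crossing two leaves), defining the Finsler norms $\nu_\pm$ as Hilbert norms of the boxes, and using the doubling criterion of Remark \ref{rk:doubling}. Two points you gloss over would need real arguments in your route: (a) the compatibility $\varphi_\lambda(\alpha)\in\varphi^*_\lambda(\alpha)$, i.e.\ that the two maps combine into a map to $\FF$ — the paper proves this by a specific trick comparing the bottoms of the boxes $[\Theta_m]$ with the tops of the duals $[i(\Theta_m)^*]$ attached to the reversed-orientation leaves; and (b) your hyperconvexity requirement (triples of point-components spanning $V$) is both stronger than what $(\PGL(V),\PP(V))$-Anosov needs (only transversality of $\varphi_\lambda(x)$ with $\varphi^*_\lambda(y)$ for $x\neq y$) and not established by anything in your sketch, while the exponential singular-value gap you attribute to ``the ping-pong'' is exactly the uniform Hilbert-contraction estimate that has to be proved from the explicit geometry of the deformed boxes. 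So the proposal is a reasonable road map that matches the paper's architecture, but the decisive constructions — the explicit family $\Sigma_{(\varepsilon,\delta)}$, the derivation of $\mathcal{R}$ from the nesting condition, and the quantitative contraction yielding the Anosov property — are missing.
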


Finally, by understanding the extension of the representations $\rho^\lambda_{\Theta} $ to $\MG$, we can prove the following:

\begin{theorem}
\label{thm:main2}
Let $[\Theta]$ be a convex marked box and let $\rho^\lambda_{\Theta} : \MG_o \to \PGL(3,\mathbb{R})$ be the representations as in Theorem \ref{thm:main}. Then there exist a real number $\varepsilon_0 < 0$ and a function $\delta_h : \; ]\varepsilon_0,0] \rightarrow \mathbb{R}$ such that for $\lambda = (\varepsilon, \delta_h(\varepsilon))$,
\begin{enumerate}
\item If $\varepsilon \in \;]\varepsilon_0,0]$, then $\rho_{\Theta}^{\lambda} $ extends naturally to a representation $\bar{\rho}_{\Theta}^{\lambda}$ of $\MG$ into $\GG$.
\item If $\varepsilon = 0$, then $\bar{\rho}^\lambda_{\Theta} = \rho_{\Theta}$.
\item If $\varepsilon \in\;]\varepsilon_0,0[$, then $\rho^\lambda_{\Theta}$ is Anosov.
\end{enumerate}
\end{theorem}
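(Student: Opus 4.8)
The plan is to realise the extension to $\MG$ as the presence of a duality implementing the outer symmetry of $\MG_o$, and then to pin down the one–dimensional locus of parameters for which that duality survives the deformation. Write $\MG = \MG_o \rtimes \langle\sigma\rangle$, where $\sigma$ is a coset representative of the nontrivial class of $\MG/\MG_o$ of order two (such a splitting exists for the modular group), and let $\theta = \mathrm{Ad}_\sigma$ be the induced automorphism of $\MG_o$; on the level of Section \ref{grupoelem}, $\theta$ is the symmetry that interchanges the elementary transformations $\tau_1$ and $\tau_2$, and it is exactly the symmetry that forces $\rho_\Theta$ to take values in $\GG$: by Theorem \ref{principalSch}, $\rho_\Theta(\sigma)$ is a \emph{duality}, not a projective transformation. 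Since the image of $\rho^\lambda_\Theta$ is irreducible and preserves no conic of $\PP(V)$, its centralizer in $\GG$ is trivial; hence a homomorphism $\MG_o\to\PGL(3,\mathbb{R})$ extends to $\MG\to\GG$ if and only if $\rho^\lambda_\Theta\circ\theta$ is conjugate in $\GG$ to $\rho^\lambda_\Theta$, the conjugator $g_\lambda$ being then unique, and $g_\lambda^2$ centralizing $\rho^\lambda_\Theta(\MG_o)$ forces $g_\lambda^2=1$ so that $\sigma\mapsto g_\lambda$ does define a homomorphism. So the task reduces to deciding, for each $\lambda=(\varepsilon,\delta)$, when $\rho^\lambda_\Theta\circ\theta$ is $\PGL(3,\mathbb{R})$–conjugate to the contragredient representation $\gamma\mapsto\trans{\rho^\lambda_\Theta(\gamma)}^{-1}$.

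I would next produce the candidate $g_\lambda$ directly from the construction underlying Theorem \ref{thm:main}: $\rho^\lambda_\Theta$ is assembled from a two–parameter deformation of the marked–box data attached to $[\Theta]$, and transporting that deformed data through the duality of the flag variety $\FF$ yields a duality $g_\lambda$, depending algebraically on $\lambda$, built $\theta$–equivariantly on the generating data. With such a $g_\lambda$ all but one of the identities in $g_\lambda\,\rho^\lambda_\Theta(\cdot)\,g_\lambda^{-1}=\rho^\lambda_\Theta(\theta(\cdot))$ hold automatically, so the extension exists precisely when a single scalar equation $F(\varepsilon,\delta)=0$ is satisfied; and $F(0,0)=0$ because $\rho_\Theta$ does extend. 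The key analytic input is the transversality $\partial F/\partial\delta\,(0,0)\neq 0$, which says that to first order it is $\delta$, and not $\varepsilon$ alone, that destroys the self–duality of the deformed configuration. Granting it, the implicit function theorem supplies a number $\varepsilon_0<0$ and a smooth function $\delta_h:\,]\varepsilon_0,0]\to\mathbb{R}$ with $\delta_h(0)=0$ and $F(\varepsilon,\delta_h(\varepsilon))=0$; for $\lambda=(\varepsilon,\delta_h(\varepsilon))$ one then defines $\bar\rho^\lambda_\Theta$ by $\bar\rho^\lambda_\Theta|_{\MG_o}=\rho^\lambda_\Theta$ and $\bar\rho^\lambda_\Theta(\sigma)=g_\lambda$, which is item (1). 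For item (2), at $\lambda=(0,0)$ the element $g_{(0,0)}$ conjugates $\rho_\Theta|_{\MG_o}$ to $\rho_\Theta|_{\MG_o}\circ\theta$ and is a duality, hence equals $\rho_\Theta(\sigma)$ by uniqueness; thus $\bar\rho^{(0,0)}_\Theta$ agrees with $\rho_\Theta$ on $\MG_o$ and on $\sigma$, so the two coincide.

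For item (3) I would feed the curve back into Theorem \ref{thm:main}(3): it suffices to show that $(\varepsilon,\delta_h(\varepsilon))\in\IR$ for every $\varepsilon$ in some interval $]\varepsilon_0,0[$, after replacing $\varepsilon_0$ by a number closer to $0$ if necessary. Here one uses that $(0,0)$ is a boundary point of $\mathcal{R}$ and does not lie in $\IR$ — indeed $\rho^{(0,0)}_\Theta=\rho_\Theta|_{\MG_o}$ is not Anosov, whereas every point of $\IR$ gives an Anosov representation — so the real question is on which side of $\partial\mathcal{R}$ the curve $\varepsilon\mapsto(\varepsilon,\delta_h(\varepsilon))$ leaves the origin. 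Comparing its tangent vector $(1,\delta_h'(0))$ with $\partial\mathcal{R}$ near $(0,0)$, read off from the explicit description (\ref{eq:R}) of $\mathcal{R}$, shows that the curve enters $\IR$ on one side of $\varepsilon=0$; relabelling $\varepsilon\mapsto-\varepsilon$ at the outset if needed, that side is $\varepsilon<0$, and then Theorem \ref{thm:main}(3) gives the Anosov property on $]\varepsilon_0,0[$.

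The main obstacle is precisely the pair of first–order facts isolated above — the transversality $\partial_\delta F(0,0)\neq 0$ and the sign of $\delta_h'(0)$ relative to $\partial\mathcal{R}$ at the origin — everything else (the extension criterion, uniqueness of the extension, the implicit function theorem, and inheritance of the Anosov property) being either formal or already contained in Theorem \ref{thm:main}. In practice both facts should be established by carrying the explicit two–parameter deformation of the generating marked boxes through the operations $i$, $\tau_1$, $\tau_2$ of Section \ref{grupoelem} and differentiating, at $\lambda=(0,0)$, the resulting expressions for the self–duality defect $F$ and for the boundary function of $\mathcal{R}$.
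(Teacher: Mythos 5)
Your outline follows the same skeleton as the paper's argument: the extension amounts to producing a polarity $\mathcal P$ with $\mathcal B^\lambda_\Theta=\mathcal P\,\mathcal A^\lambda_\Theta\,\mathcal P$ (equivalently an invertible symmetric $S$ with $S^{-1}\trans{(A_\Theta)}^{-1}S=B^\lambda_\Theta$), this is a codimension-one condition through $(0,0)$, the implicit function theorem yields $\delta_h$, and one then checks that the curve $(\varepsilon,\delta_h(\varepsilon))$ lies in $\mathcal R$, resp.\ $\IR$, for $\varepsilon\in\,]\varepsilon_0,0]$, resp.\ $]\varepsilon_0,0[$, so that Theorem \ref{thm:main} gives items (2) and (3). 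However, the proposal defers exactly the steps that carry the content, and some intermediate claims are not right as stated. The reduction to ``a single scalar equation $F(\varepsilon,\delta)=0$'' is asserted, not proved: in the paper this is the appendix Lemma \ref{appendix}, which uses that $A_\Theta$ is a rotation of angle in $(0,\pi)$ and that $B^\lambda_\Theta$ is conjugate to $\trans{(A_\Theta)}^{-1}$ to show that a nonzero \emph{symmetric} intertwiner exists iff $\det(\mathrm{Id}-A_\Theta B^\lambda_\Theta)=0$; one must then still get \emph{invertibility} of $S$, which follows from openness because $S=D_\Theta$ works at $\lambda=(0,0)$. Your alternative route through ``trivial centralizer in $\GG$'' (to force uniqueness of $g_\lambda$ and $g_\lambda^2=1$) rests on irreducibility and the absence of an invariant conic for $\rho^\lambda_\Theta$, which you do not establish and which fails outright for special boxes; the paper avoids this by demanding $S$ symmetric from the start (a polarity is automatically an involution in $\GG$, Remark \ref{rk:polarity}) and by treating the special case separately, where $S=\Sigma_\lambda$ solves the problem for every $\lambda$ and one may take $\delta_h\equiv 0$.

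More seriously, the two ``first-order facts'' you isolate as the main obstacle are precisely what has to be computed, and one of them is not uniformly true: the paper's function is $h(\varepsilon,\delta)$ with $\partial h/\partial\delta(0,0)=2(\zeta_t^2+\zeta_b^2-2\zeta_t^2\zeta_b^2)$, which vanishes exactly when $[\Theta]$ is special, so the transversality you take as granted requires the special/non-special dichotomy. For item (3), the option of ``relabelling $\varepsilon\mapsto-\varepsilon$ if needed'' is not available: the parameter and the region $\mathcal R$ of (\ref{eq:R}) are fixed by Theorem \ref{thm:main}, and $\mathcal R$ lies in $\{\varepsilon\le 0\}$; what must be shown (and is shown in the paper by explicit differentiation) is that $d\delta_h/d\varepsilon(0)=0$ while the two boundary curves $f(\varepsilon,\pm\delta)=0$ have slopes $\mp 1$ at the origin, so the curve $(\varepsilon,\delta_h(\varepsilon))$ is tangent to the bisector of the wedge $\mathcal R$ and enters $\IR$ precisely for small $\varepsilon<0$. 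Until the determinant identity behind $h$, the derivative $\partial_\delta h(0,0)\neq 0$ (non-special case), and the comparison of $\delta_h'(0)$ with $\partial\mathcal R$ are actually carried out, what you have is a correct outline of the paper's proof rather than a proof.
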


The remainder of this paper is organized as follows.

In Section \ref{sec.Farey}, we recall some basic facts on the Farey triangulation that, as observed by Schwartz, is very useful for the description of the combinatorics of Pappus iterations.
In Section \ref{sec:pappus}, we describe the dynamics on marked boxes generated by Pappus Theorem. In Section \ref{sec:projective}, we introduce the group $\GG$ of projective symmetries and the group $\mathfrak{G}$ of elementary transformations of marked boxes. In Section \ref{sec:Schwartz}, we present Schwartz representations, which involves a labeling on Farey geodesics by the orbit of a marked box under $\mathfrak{G}$. In Section \ref{sec:anosov}, we define Anosov representations. After that, we start the original content of this paper. In Section \ref{sec:new}, we construct our new elementary transformations on marked boxes and our new representations of $\MG_o$ in $\PGL(3,\mathbb{R})$. In Section \ref{sec:norm}, we explain how to define special norms on the projective plane (or its dual plane) for each convex marked box, and using them, in Section \ref{sec:newisanosov}, we prove that our new representations are Anosov, which  establish Theorem \ref{thm:main}. In Section \ref{conclusion}, we understand how to extend new representations to $\MG$ for the proof of Theorem \ref{thm:main2}. Finally, in Section \ref{section:InVariety}, we show that the $\PGL(3,\mathbb{R})$-orbit of our new representations in the algebraic variety $\Rep(\MG_o, \PGL(3,\mathbb{R}))$ has a non-empty interior.

\subsection*{Acknowledgements}
We are thankful for helpful conversations with L\'eo Brunswic, M\'ario Jorge D. Carneiro, S\'ergio Fenley, Nikolai Goussevskii, Fabian Ki{\ss}ler, Jaejeong Lee, Carlos Maquera, Alberto Sarmiento and Anna Wienhard. We also thank the \textit{Funda\c c\~ao de Amparo \`a Pesquisa do Estado de Minas Gerais (FAPEMIG)} and the \textit{Coordena\c c\~ao de Aperfei\c coamento de Pessoal de N\'ivel Superior (CAPES)} for their financial support during the realization of this work. It has been continued during the stay of the first author in the Federal University of Minas Gerais (UFMG) and supported by the program ``Chaire Franco-Br\'esilienne \`a l'UFMG" between the University and the French Embassy in Brazil. The final version of this paper has been elaborated as part of the project MATH AMSUD 2017, Project No. 38888QB - GDAR. Finally, we would like to thank the referees for carefully reading the paper and suggesting several improvements.

G.-S. Lee was supported by the DFG research grant ``Higher Teichm{\"u}ller Theory'' and by the European Research Council under ERC-Consolidator Grant 614733, and he acknowledges support from U.S. National Science Foundation grants DMS 1107452, 1107263, 1107367 ``RNMS: GEometric structures And Representation varieties" (the GEAR Network).

\section{A short review on the Farey triangulation}\label{sec.Farey}
We start by giving a short overview of the Farey graph (see e.g. Katok--Ugarcovici \cite{KU} or Morier-Genoud--Ovsienko--Tabachnikov \cite{OvTab}).

Let $\Delta_{0}$ be the ideal geodesic triangle in the upper half plane $\mathbb{H}^2$ whose vertices are $0, 1, \infty$ in $\partial \mathbb{H}^2$. For any two distinct points $x, y \in \partial \mathbb{H}^2$, we denote by $[x,y]$ the unique oriented geodesic joining $x$ and $y$. The initial point $x$ of an oriented geodesic $e = [x, y]$ is called the \emph{tail} of $e$ and the final point $y$ is called the \emph{head} of $e$. The modular group $\MG$, which is a subgroup of $\PSL(2,\mathbb{R})$, has the group presentation:
\begin{equation}\label{equation:1stpresentation}
\langle\, I,R \mid I^2=1, R^3=1 \,\rangle
\end{equation}
where $I = \left(
\begin{smallmatrix}
0 & 1 \\
-1 & 0 \\
\end{smallmatrix}
\right)
 $ and
 $R = \left(
\begin{smallmatrix}
-1 & 1 \\
-1 & 0 \\
\end{smallmatrix}
\right)
$.
The isometry $R$ of $\mathbb{H}^2$ is the rotation of order 3 whose center is the ``center'' of the triangle $\Delta_{0}$ and that permutes $1$, $0$, $\infty$ in this (clockwise) cyclic order. The isometry $I$ of $\mathbb{H}^2$ is the rotation of order 2 whose center is the orthogonal projection of the ``center'' of  $\Delta_{0}$ on the geodesic $[\infty,0]$ (see Figure \ref{fig:farey_triangulation}).

\begin{remark}\label{rk:index2}
  It will be essential for us to deal with the subgroup $\MG_o$ of $\MG$ generated by $R$ and $IRI$. It consists of the elements of $\MG$ that can be written as a word made up of the letters $I$ and $R$ with an even number of $I$, and it is in fact the {\it unique} index $2$ subgroup of $\MG$ since every homomorphism of $\MG$ into $\mathbb{Z}/2\mathbb{Z}$ must vanish on $R$. Finally, we can also characterize $\MG_o$ as the set of elements of $\MG$ whose trace is an odd integer.
\end{remark}

The Farey graph is a directed graph whose set of vertices is $\mathbb{Q} \cup \{\infty\}$. Taking the
convention $\infty = 1/0$, two vertices $p/q$ and $p'/q'$ (in reduced form) are connected by
an edge if and only if $pq'-p'q  = \pm1$. Two adjacent vertices are connected by exactly two oriented edges $e$ and $\bar{e}$, where $\bar{e}$ is the same as $e$ except the orientation.

It is well-known that if we realize every oriented edge $(p/q, p'/q')$ by the oriented geodesic $[p/q, p'/q']$ in $\mathbb{H}^2$ joining $p/q$ and $p'/q'$ in $\partial\mathbb{H}^2$, then we obtain a triangulation of $\mathbb{H}^2$ (see Figure \ref{fig:farey_triangulation}), called the \emph{Farey triangulation}. Each oriented geodesic in $\mathbb{H}^2$ that realizes an edge of the Farey graph is called a \emph{Farey geodesic} and we denote the set of Farey geodesics by $\mathcal{L}_o$.

\begin{figure}[ht!]
\centering
\includegraphics[scale=0.3]{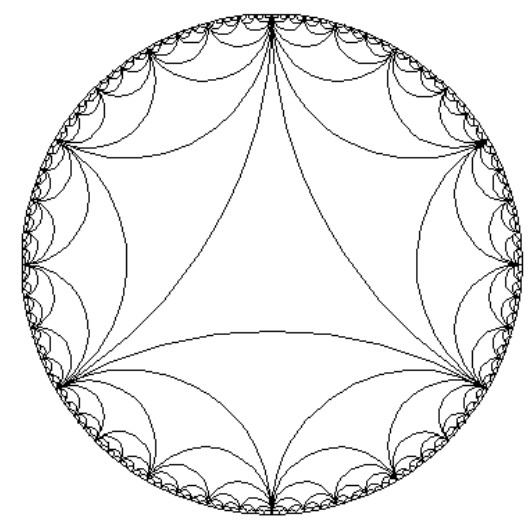}
\footnotesize
\put(-110,26){$0$}
\put(-7,26){$1$}
\put(-60,114){$\infty$}
\put(-56,58){ $R$}
\put(-58,59){$\cdot$}
\put(-60.5,59){\small $\circlearrowright$}
\put(-78,67){$I$}
\put(-69,65){$\cdot$}
\put(-72,65){\small $\circlearrowright$}
\put(-60,46){$\Delta_0$}
\caption{The Farey triangulation in the Poincar{\'e} disk model of $\mathbb{H}^2$}
\label{fig:farey_triangulation}
\end{figure}

We can regard the Farey triangulation as a tiling: The Farey geodesics without orientation are edges of ideal geodesic triangles, called \emph{Farey triangles}. For example, the triangle $\Delta_{0}$ is a Farey triangle. The modular group $\MG$ acting on $\mathbb{H}^2$ preserves the Farey triangulation, and acts transitively on the set of Farey triangles. Moreover, the stabilizer of $\Delta_0$ in $\MG$ is the subgroup $\langle R \rangle$ of order $3$ generated by $R$.

\begin{remark}\label{rk:index2farey}
 The index $2$ subgroup $\MG_o$ does not act transitively on the set of Farey geodesics, but acts simply transitively on the set of \emph{non-oriented} Farey geodesics. On the other hand, once chosen a Farey geodesic $e_0$ (we will always take $e_0 = [\infty,0]$), then $\MG_o$ acts simply transitively on the orbit of $e_0$ under $\MG_o$, which is called the \emph{$\MG_o$-orientation}.
\end{remark}

It is useful to consider the following other presentation of $\MG$:
\begin{equation}\label{equation:2ndpresentation}
\langle\, I, T_1, T_2  \mid I^2=1, \,\: T_1 I T_2 = I, \,\: T_2 I T_1 = I, \,\:  T_1 I T_1 = T_2, \,\: T_2 I T_2 = T_1 \, \rangle
\end{equation}
where $T_1 :=IR$ and $T_2 := IR^2$.

\begin{remark}\label{rk:eveng}
An element of $\MG$ belongs to the index $2$ subgroup $\MG_o$  if and only if it is a product of an even number of generators $I$, $T_1$, $T_2$.
\end{remark}

Now consider another action of $\MG$ on the set $\mathcal{L}_o$ of Farey geodesics, denoted by $*$, such that for every Farey geodesic $e \in \mathcal{L}_o$, we have (see Figure \ref{fig:click}):

\begin{itemize}
\item $I*e$ is the Farey geodesic $\bar{e}$, which is the same as $e$ except the orientation;
\item $T_1*e$ is the Farey geodesic obtained by rotating $e$ counterclockwise one ``click'' about its tail point;
\item $T_2*e$ is the Farey geodesic obtained by rotating $e$ clockwise one ``click'' about its head point.
\end{itemize}

\begin{figure}[ht!]
\centering
\includegraphics[scale=0.25]{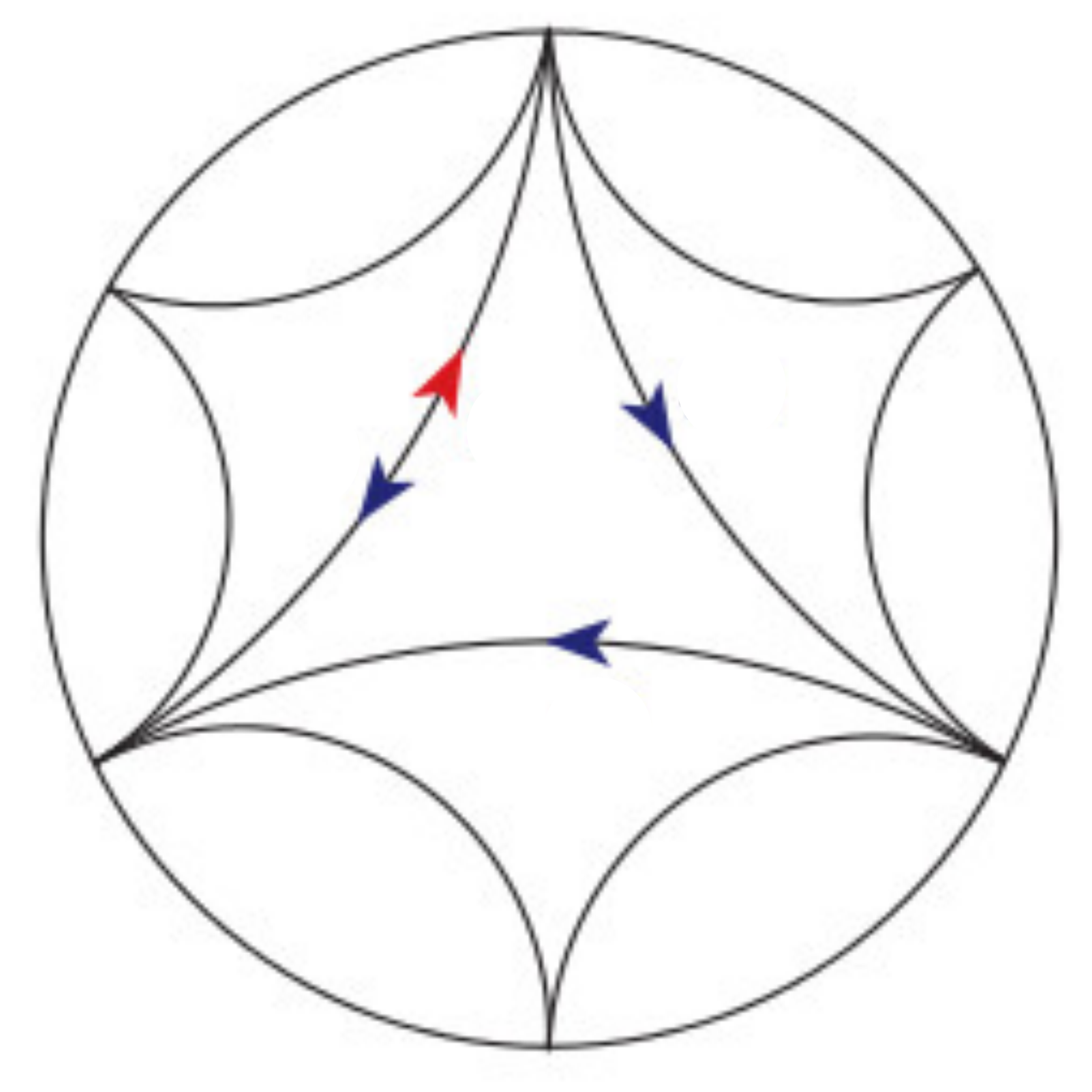}
\footnotesize
\put(-85,66){\textcolor{blue}{$e$}}
\put(-46,74){\textcolor{blue}{$T_1*e$}}
\put(-70,41){\textcolor{blue}{$T_2*e$}}
\put(-93,77){\textcolor{blue}{$I*e$}}
\caption{The $*$-action of $\MG$ on $\mathcal{L}_o$.}
\label{fig:click}
\end{figure}

These two actions of $\MG$ on $\mathcal{L}_o$ are both simply transitive and commute each other.

\begin{remark}
The $*$-action of $\MG$ on $\mathcal{L}_o$ does {\it not} induce an action on the Farey graph since it does {\it not} respect the incidence relation of the graph. For example, even though $T_2 * e$ is incident to $T_2^{-1} * e$, the Farey geodesic $T_1 * (T_2 * e) $ is {\it not} incident to $T_1 * ( T_2^{-1} * e )$.
\end{remark}

\begin{remark}\label{rek:2action}
For the $*$-action, the orbit of a Farey geodesic under $\langle R \rangle$ is the union of the three edges of a Farey triangle because $R = IT_1$ and $R^2=IT_2$. Moreover, for the specific Farey geodesic $e_0 = [\infty,0]$, we have:
$$I(e_0)=I*e_0, \quad R(e_0)=R*e_0, \quad R^2(e_0) = R^2*e_0$$
\end{remark}

\section{A dynamic of Pappus Theorem via marked boxes}
\label{sec:pappus}
As in Schwartz \cite{SCHWARTZ}, we consider the Pappus Theorem as a dynamical system defined on objects called marked boxes. A marked box is essentially a collection of points and lines in the projective plane satisfying the rules that we present below.

\subsection{Pappus Theorem} \label{secPappus}
Let $V$ be a 3-dimensional real vector space and let $\PP(V)$ be the projective space associated to $V$, \ie the space of $1$-dimensional subspaces of $V$. If $a$ and $b$ are two distinct points of $\PP(V)$, then $ab$ denotes the line through $a$ and $b$. In a similar way, if $A$ and $B$ are two distinct lines of $\PP(V)$, then $AB$ denotes the intersection point of $A$ and $B$.

\begin{theorem} [Pappus Theorem]
If the points $a_1$, $a_2$, $a_3$ are collinear and the points $b_1$, $b_2$, $b_3$ are collinear in $\PP(V)$, then the points $c_1= (a_2 b_3) (a_3 b_2)$, $c_2=(a_3 b_1)  (a_1 b_3)$, $c_3=(a_1 b_2) (a_2 b_1)$ are also collinear in $\PP(V)$.
\end{theorem}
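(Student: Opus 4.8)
The plan is to prove the statement by a direct computation in a well-chosen system of projective coordinates, reducing the configuration to a normal form dictated by the two given lines. First I would dispose of the degenerate case: if the line $L$ through $a_1,a_2,a_3$ equals the line $M$ through $b_1,b_2,b_3$, then all six points lie on $L=M$ and, whenever they are defined at all, so do $c_1,c_2,c_3$, so there is nothing to prove. I may therefore assume $L\neq M$ and, moreover, that the six points are in general position (none equal to $o:=L\cap M$, and pairwise distinct); since the collinearity of $c_1,c_2,c_3$ is a Zariski-closed condition on the configuration while the construction of the $c_i$ is generically well-defined, the general case follows by continuity from this one.

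Under these assumptions $a_1$, $b_1$, $o$ form a projective triangle, so I can fix coordinates $[x_1:x_2:x_3]$ on $\PP(V)$ with $a_1=[1:0:0]$, $b_1=[0:1:0]$ and $o=[0:0:1]$. Then $L=a_1o=\{x_2=0\}$ and $M=b_1o=\{x_1=0\}$, hence there are scalars $s,t,u,v$ with $a_2=[1:0:s]$, $a_3=[1:0:t]$, $b_2=[0:1:u]$, $b_3=[0:1:v]$. Representing the line through two points, and dually the meet of two lines, by the cross product of the corresponding coordinate vectors, a routine computation with $2\times2$ minors gives
$$ c_1=[\,v-u:t-s:vt-su\,],\qquad c_2=[\,v:t:tv\,],\qquad c_3=[\,u:s:su\,]. $$

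The conclusion is then immediate, and this is the whole point of the chosen normalization: the vector $(v-u,\,t-s,\,vt-su)$ representing $c_1$ is exactly the difference of the vectors $(v,t,tv)$ and $(u,s,su)$ representing $c_2$ and $c_3$, so the $3\times3$ determinant with rows $c_1,c_2,c_3$ vanishes and the three points are collinear. I expect no genuine obstacle here; the only points needing a little care are the bookkeeping of which degenerate configurations make some $c_i$ ill-defined and the justification that the coordinate normalization is legitimate, both handled in the first paragraph. (For the record, one may alternatively view Pappus's theorem as the special case of Pascal's theorem for the split conic $L\cup M$ applied to the inscribed hexagon $a_1b_3a_2b_1a_3b_2$, but the coordinate argument above is self-contained.)
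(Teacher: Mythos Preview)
Your argument is correct: the coordinate normalization is legitimate once $L\neq M$ and $a_1,b_1,o$ are in general position, the cross-product computations check out (one finds $c_1=[v-u:t-s:vt-su]$, $c_2=[v:t:tv]$, $c_3=[u:s:su]$), and the linear dependence $c_1=c_2-c_3$ at the level of representing vectors immediately gives the vanishing of the $3\times3$ determinant. The handling of degenerate configurations via Zariski closure is also fine.

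There is, however, nothing to compare with: the paper does not supply a proof of Pappus's theorem. It is quoted as a classical result and serves only as the seed for the dynamical system on marked boxes studied afterwards. So your coordinate proof stands on its own as a self-contained verification of the statement; it is not in tension with anything in the paper, nor is it redundant with it.
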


We say that the Pappus Theorem is on \emph{generic conditions} if
$a_1$, $a_2$, $a_3$ are distinct points of a line $L_a$, as well as $b_1$, $b_2$, $b_3$ are distinct points of a line $L_b$, and $a_i  \notin L_b$, $b_i \notin L_a$ for all $ i = 1,2,3 $. When the Pappus Theorem is on generic conditions, we have a \emph{Pappus configuration} formed by the points $a_1, a_2, a_3, b_1, b_2, b_3$. An important fact is that the Pappus Theorem on generic conditions can be iterated infinitely many times (see Figure \ref{fig:iteration}), \ie a Pappus configuration is stable, and therefore it gives us a dynamical system (for a proof of the stability of generic conditions under Pappus iteration, see Val\'erio \cite{PV}).

\begin{figure}[ht!]
\centering
\includegraphics[scale=0.65]{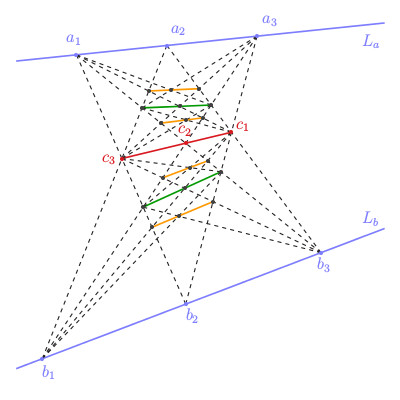}
\caption{An iteration of the Pappus Theorem}
\label{fig:iteration}
\end{figure}

\subsection{Marked boxes} \label{markedbox}
Let $V^*$ be the dual vector space of $V$ and let $\PP(V^*)$ be the projective space associated to $V^*$, \ie the space of lines of $\PP(V)$. An \emph{overmarked box} $\Theta$ of $\PP(V)$ is a pair of distinct 6 tuples having the incidence relations shown in Figure \ref{fig:overmarked_box}:
\begin{eqnarray*}
& \Theta = ((p, q, r, s; t, b),(P, Q, R, S; T, B))\\
& p, q, r, s, t, b \in \PP(V) \, \textrm{ and } \,  P, Q, R, S, T, B \in \PP(V^*)  \\
& P=ts, Q=tr, R=bq, S=bp, T=pq, B=rs \textrm{ and } TB \notin \{p, q, r, s, t, b\}
\end{eqnarray*}

\begin{figure}[ht!]
\centering
\includegraphics[scale=0.75]{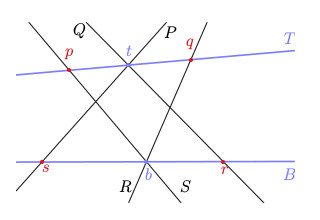}
\caption{An overmarked box in $\PP(V)$}
\label{fig:overmarked_box}
\end{figure}

The overmarked box $\Theta$ is completely determined by the $6$ tuple $(p, q, r, s; t, b)$, but it is wise to keep in mind that
we should treat equally the dual counterpart $(P, Q, R, S; T, B)$.
The \emph{dual} of $\Theta$, denoted by $\Theta^*$, is $((P, Q, R, S; T, B),(p, q, r, s; t, b))$. The \emph{top} (flag) of $\Theta$ is the pair $(t, T)$ and the \emph{bottom} (flag) of $\Theta$ is the pair $(b, B)$.

We denote the set of overmarked boxes by $\CSM$. Let $j: \CSM \to \CSM$ be the involution given by:
\begin{equation}\label{equation:involution}
((p, q, r, s; t, b),(P, Q, R, S; T, B)) \mapsto ((q, p, s, r; t, b),(Q, P, S, R; T, B))
\end{equation}
A \emph{marked box} is an equivalence class of overmarked boxes under this involution $j$. We denote the set of marked boxes by $\CM$. An overmarked box $\Theta = ((p, q, r, s; t, b),(P, Q, R, S; T, B))$ (or a marked box $[\Theta]$) is \emph{convex} if the following two conditions hold:

\begin{itemize}
\item The points $p$ and $q$ separate $t$ and $TB$ on the line $T$.
\item The points $r$ and $s$ separate $b$ and $TB$ on the line $B$.
\end{itemize}

Given a marked box $[\Theta]$, we can define the segments $[pq]$ (resp. $[rs]$) as the closure of the complement in $T$ (resp. $B$) of $\{p, q\}$ (resp. $\{ r, s \}$) containing $t$ (resp. $b$). There are three ways to choose the segments $[qr]$, $[sp]$ simultaneously so that they do not intersect. If the marked box $[\Theta]$ is convex, then one of these three choices leads to a quadrilateral $(p, q, r, s)$ (in this cyclic order)
the boundary of which is not freely homotopic to a line of $\PP(V)$. We then define the \emph{convex interior}, denoted by $\interior{[\Theta]}$, of the convex marked box $[\Theta]$ as the interior of the convex quadrilateral $(p, q, r, s)$ in $\PP(V)$ (see Figure \ref{fig:convex_interior}).

\begin{figure}[ht!]
\centering
\includegraphics[scale=0.75]{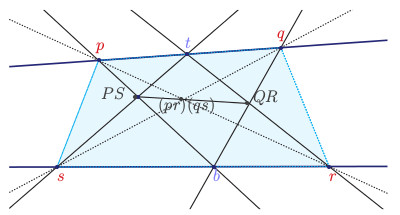}
\caption{A convex interior $\interior{[\Theta]}$ of $[\Theta]$ in $\PP(V)$ is drawn in blue.}
\label{fig:convex_interior}
\end{figure}

Finally, a marked box $[\Theta]$ is convex if and only if the dual $[\Theta^*]$ of $[\Theta]$ is convex, and in this case, we denote by $\interior{[\Theta^*]}$ the convex interior of $[\Theta^*]$ in $\PP(V^*) $. Be careful that $\interior{[\Theta^*]}$ is \emph{not} the convex domain dual to $\interior{[\Theta]}$.

\section{Two groups acting on marked boxes}
\label{sec:projective}

Following Schwartz \cite{SCHWARTZ}, we will explain how the group of projective symmetries acts on marked boxes, and introduce the group of elementary transformations of marked boxes.

\subsection{The group $\GG$ of projective symmetries}
Recall that $V$ is a three-dimensional real vector space and $V^*$ is its dual vector space. We denote by $\langle v^* | v\rangle$ the evaluation of an element $v^*$ of $V^*$ on an element $v$ of $V$. If $W$ is a vector space and $f : V\to W$ is a linear isomorphism between $V$ and $W$, then the dual map of $f$ is the linear isomorphism $f^*: W^*\to V^*$ such that $\langle f^*( w^*) | v\rangle = \langle  w^*| f(v)\rangle$ for all $w^* \in W^*$ and $v \in V$.

We denote the projectivization of $f : V \rightarrow W$ by $\PP(f) : \PP(V) \rightarrow \PP(W)$. A \emph{projective transformation} $T$ of $\PP(V)$ is a transformation of $\PP(V)$ induced by an automorphism $g$ of $V$, \ie $T=\PP(g)$, and the dual map $T^*$ of $T$ is the transformation $\PP(g^*)^{-1}$ of $\PP(V^*)$. A \emph{projective duality} $D$ is a homeomorphism between $\PP(V)$ and $\PP(V^*) $ induced by an isomorphism $h$ between $V$ and $V^*$, \ie $D=\PP(h)$, and the dual map  $D^*$ of $D$ is the homeomorphism $(\PP(h^*) \circ \PP(I))^{-1} : \PP(V^*) \rightarrow \PP(V) $, where $I : V \rightarrow V^{**}$ is the canonical linear isomorphism between $V$ and $V^{**}$. We denote by $[v]$ the $1$-dimensional subspace spanned by a non-zero $v$ of $V$. The \emph{flag variety} $\FF$ is the subset of $\PP(V)\times \PP(V^*) $ formed by all pairs $([v],[v^*])$ satisfying $\langle v^* | v\rangle=0$.

If $T$ is a projective transformation of $\PP(V)$, then there is an automorphism $\mathcal{T}: \FF \to \FF$, also called \emph{projective transformation}, defined by:
\begin{equation}\label{homeo trans}
\mathcal{T}(x, X)=(T(x), T^*(X)) \quad \textrm{for every } (x,X) \in \FF
\end{equation}

Similarly, if $D: \PP(V) \to \PP(V^*)$ is a duality, then there is an automorphism $\mathcal{D}: \FF \to \FF$, also called \emph{duality}, defined by:
\begin{equation}\label{homeo dual}
\mathcal{D}(x, X)=(D^*(X),D(x))\quad \textrm{for every } (x,X) \in \FF
\end{equation}

Let $\HH$ be the set of projective transformations of $\FF$ as in $(\ref{homeo trans})$, and let $\GG$ be the set formed by $\HH$ and dualities of $\FF$ as in $(\ref{homeo dual})$. This set $\GG$ is the \emph{group of projective symmetries} with the obvious composition operation. The subgroup $\HH$ of $\GG$ has index $2$.

\begin{remark}\label{rk:polarity}
 If we equip $V$ with a basis $\mathcal{B}$ and $V^*$ with the dual basis $\mathcal{B}^*$, then the projective space $\PP(V)$ and its dual space $\PP(V^*)$ can be identified with $\PP(\mathbb{R}^3)$. A duality $D: \PP(V) \to \PP(V^*)$ is given by a unique element $A \in \PGL(3,\mathbb{R})$, and the flag transformation $\mathcal{D}$ is expressed by the map
 $$ (x, X) \mapsto (\trans{A}^{-1}X, Ax)$$
 from $\{ (x,X) \in \PP(\mathbb{R}^3) \times \PP(\mathbb{R}^3) \mid x \cdot X =0 \}$ into itself, where $\trans{A}$ denotes the transpose of $A$ and $x \cdot X$ is the dot product of $x$ and $X$. It follows that involutions in $\GG \setminus \HH$ correspond to dualities $D$ for which $A$ is symmetric. They are precisely \emph{polarities}, \ie isomorphims $h$ between $V$ and $V^*$ for which $(u,v) \mapsto \langle h(u) | v\rangle$ is a non-degenerate symmetric bilinear form.
 \end{remark}

\subsection{The action of $\GG$ on marked boxes}

If $T$ is a projective transformation of $\PP(V)$ that induces $\mathcal{T} \in \HH \subset \GG$, then we define a map $\mathcal{T} : \CSM \rightarrow \CSM$ by:
$$
\mathcal{T}(\Theta)=((\hat{p}, \hat{q}, \hat{r}, \hat{s}; \hat{t},\hat{b}),(\hat{P}, \hat{Q}, \hat{R}, \hat{S}; \hat{T}, \hat{B})) \, \textrm{ for every } \Theta \in \CSM
$$
where $\hat{x}=T(x)$ for $x \in \PP(V)$ and $\hat{X}=T^*(X)$ for $X\in \PP(V^*) $.

If $D$ is a duality that induces $\mathcal{D} \in \GG \setminus\HH$, then we define a map $\mathcal{D} : \CSM \rightarrow \CSM$ by:
\begin{equation}\label{equation:reordering}
\mathcal{D}(\Theta)=((P^*, Q^*, {\color{blue}S^*}, {\color{blue}R^*}; T^*,B^*),({\color{blue}q^*}, {\color{blue}p^*}, r^*, s^*; t^*, b^*)) \, \textrm{ for every } \Theta \in \CSM
\end{equation}
where $X^*=D^*(X)$ for $ X \in \PP(V^*)$ and $x^* = D(x)$ for $x \in \PP(V) $.

It is clear that both transformations $\mathcal T$ and $\mathcal D$ commute with the involution $j$ (see (\ref{equation:involution})), and so it induces an action of $\GG$ on $ \CM $, which furthermore preserves the convexity of marked boxes. We will see in Section \ref{grupoelem} that this action commutes with elementary transformations of marked boxes.

\begin{remark}
If we consider the map $\Upsilon: \CSM \to \FF^6$ defined by:
$$\Upsilon((p, q, r, s; t, b),(P, Q, R, S; T, B)) = ( (t,T), (t,P), (t,Q), (b,B), (b,R), (b,S))$$
then it is a bijection onto some subset of $\FF^6$ (which is not useful to describe further). It induces a map
from $\CSM$ into the quotient of $\FF^6$ by the involution permuting the second and the third factor, and the fifth and the sixth factor. Therefore, it gives us a natural action of the group $\GG$ of projective symmetries on $\CM$. In particular, (\ref{equation:reordering}) would be:
\begin{equation}\label{equation:wrong}
\mathcal{D}(\Theta)=((P^*, Q^*, {\color{blue}R^*}, {\color{blue}S^*}; T^*,B^*),({\color{blue}p^*}, {\color{blue}q^*}, r^*, s^*; t^*, b^*)) \, \textrm{ for every } \Theta \in \CSM
\end{equation}
However, as Schwartz observed, (\ref{equation:wrong}) is \emph{not} the one we should consider because with this choice the Schwartz Representation Theorem (Theorem \ref{principalSch}) would fail.
\end{remark}

\begin{remark}\label{rk:notCSM}
Given two dualities $\mathcal{D}_1, \mathcal{D}_2 \in \GG \setminus\HH $, we have:
$$ \mathcal{D}_2 (\mathcal{D} _1 (\Theta)) = j ((\mathcal{D}_2 \mathcal{D}_1) (\Theta)) $$
where $ j: \CSM \to \CSM $ is the involution defining marked boxes (see (\ref{equation:involution})). Hence, the action of $\GG$ on $\CM$ defined by Schwartz does not lift to an action of $ \GG$ on $ \CSM $.
\end{remark}

\subsection{The space of marked boxes modulo $\GG$}
\label{sub:parameter}

Let
$$ \Theta =((p, q, r, s; t, b), (P, Q, R, S; T, B))$$
be an overmarked box.

\begin{definition}
A \emph{$\Theta$-basis} is a basis  of $V$ for which the points $p, q, r, s$ of $\Theta$ have the projective coordinates:
$$ p = [-1 : 1 : 0],\quad q = [1 : 1 : 0],\quad r = [1 : 0 : 1],\quad s = [ -1 : 0 : 1]
$$
If $\zeta_t$ and $\zeta_b$ denote the real numbers, different from $\pm 1$, such that:
$$ t = [\zeta_t : 1 : 0] \quad \textrm{and} \quad b = [\zeta_b : 0 : 1]
$$
then we call $\Theta$ a $(\zeta_t, \zeta_b)$-overmarked box. It is said to be \emph{special} when $(\zeta_t, \zeta_b) = (0, 0)$.
\end{definition}
Observe that for each $\Theta$, there exists a unique $\Theta$-basis up to scaling, and hence that $\zeta_t$, $\zeta_b$ are well-defined. Two overmarked boxes lie in the same $\HH$-orbit  if and only if they have the same coordinates $\zeta_t$ and $\zeta_b$.
In other words, we can identify the space of overmarked boxes modulo $\HH$ with:
$$ \Omega = \{\, (\zeta_t,\zeta_b) \in \mathbb{R}^2 \mid  (\zeta_t^2 - 1)(\zeta_b^2 - 1) \neq 0  \,\} $$
Moreover, the overmarked box $\Theta$ is convex if and only if $\zeta_t$ and $\zeta_b $ are in $]\!-\!1, 1[.$

\begin{remark}
If we equip $V$ with a $\Theta$-basis of $V$ and $V^*$ with its dual basis, then the lines $P, Q, R, S, T, B$ of $\Theta$ have the following projective coordinates:
$$
P = [1 : -\zeta_t : 1], \quad  Q = [-1 : \zeta_t : 1], \quad  R = [-1 : 1 : \zeta_b], \quad  S = [ 1 : 1 : -\zeta_b] $$
$$
T = [0 : 0 : 1] \quad \textrm{and} \quad  B = [0 : 1 : 0]
$$
\end{remark}
\begin{proposition}\label{pro:stabilibox}
Let $\gamma$ be a rotation of $\mathbb{R}^2$ through the angle $\tfrac{\pi}{2}$ about the origin. Then the space of marked boxes modulo $\GG$ is isomorphic to a $2$-dimensional orbifold $\mathcal{O} = \Omega / \langle \gamma \rangle$. In particular, the singular locus of $\mathcal{O}$ consists of a cone point of order $4$, which corresponds to the special marked boxes.
\end{proposition}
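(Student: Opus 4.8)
The strategy is to work in the parametrization $\CSM/\HH = \Omega$ recorded above, compute the residual action on $\Omega$ of the box involution $j$ and of one duality, and then use that $\GG$ is generated by $\HH$ together with a single polarity. The action of $j$ is immediate: writing $\Theta$ in a $\Theta$-basis, the box $j(\Theta)$, which swaps $p,q$ and $r,s$, returns to $\Theta$-basis form under the linear automorphism $(x_1,x_2,x_3)\mapsto(-x_1,x_2,x_3)$, and this sends $t = [\zeta_t : 1 : 0]$ and $b = [\zeta_b : 0 : 1]$ to $[-\zeta_t : 1 : 0]$ and $[-\zeta_b : 0 : 1]$. Hence $j$ descends on $\Omega$ to $(\zeta_t,\zeta_b)\mapsto(-\zeta_t,-\zeta_b) = \gamma^2$.

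For a duality, note that since $\HH$ acts trivially on $\CSM/\HH$, every duality induces the same self-map of $\Omega$; so I would use the polarity $\mathcal{D}_0$ whose matrix in a $\Theta$-basis and its dual basis is the identity, for which $D_0(x) = x$ and $D_0^*(X) = X$. Feeding the listed coordinates of $P,Q,R,S,T,B$ into (\ref{equation:reordering}), the box $\mathcal{D}_0(\Theta)$ has, in the original $\Theta$-basis, quadrilateral vertices $P^* = [1 : -\zeta_t : 1]$, $Q^* = [-1 : \zeta_t : 1]$, $S^* = [1 : 1 : -\zeta_b]$, $R^* = [-1 : 1 : \zeta_b]$ and top/bottom points $T^* = [0 : 0 : 1]$, $B^* = [0 : 1 : 0]$. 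I would then determine the linear map $M$, unique up to scale, carrying those four vertices to $[-1:1:0],[1:1:0],[1:0:1],[-1:0:1]$ respectively; solving the resulting system gives the two entries we need, $M(0,0,1) = \tfrac{1}{1-\zeta_b^2}(\zeta_b,-1,0)$ and $M(0,1,0) = \tfrac{1}{1-\zeta_t^2}(\zeta_t,0,1)$, which are finite and nonzero precisely because $(\zeta_t,\zeta_b)\in\Omega$. Applying $M$ to $T^*$ and $B^*$ shows that the new top and bottom are $[-\zeta_b : 1 : 0]$ and $[\zeta_t : 0 : 1]$, so $\mathcal{D}_0$ descends on $\Omega$ to $(\zeta_t,\zeta_b)\mapsto(-\zeta_b,\zeta_t) = \gamma$ (consistently with $\mathcal{D}_0^2 = j$ as self-maps of $\CSM$, by Remark \ref{rk:notCSM}).

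Finally I assemble the quotient. The group $\GG$ equals $\langle\HH,\mathcal{D}_0\rangle$, both $\HH$ and $\mathcal{D}_0$ act on $\CSM$ by maps commuting with $j$ (Section \ref{sec:projective}), and $\mathcal{D}_0^2 = j$; hence the $\GG$-orbits in $\CM = \CSM/j$ are exactly the images of the $\langle\HH,\mathcal{D}_0\rangle$-orbits in $\CSM$, so $\CM/\GG = \CSM/\langle\HH,\mathcal{D}_0\rangle$. Quotienting first by $\HH$, which acts trivially on $\CSM/\HH = \Omega$ while $\mathcal{D}_0$ acts as $\gamma$, and noting that $\Omega$ is $\gamma$-invariant because the function $(\zeta_t^2-1)(\zeta_b^2-1)$ is, we get $\CM/\GG \cong \Omega/\langle\gamma\rangle$ with $\langle\gamma\rangle\cong\mathbb{Z}/4\mathbb{Z}$. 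A finite group acting on a $2$-manifold produces a $2$-orbifold whose cone points are the images of the points with nontrivial stabilizer; the only point of $\mathbb{R}^2$ fixed by a nontrivial rotation about the origin is the origin, which lies in $\Omega$ (since $(0-1)(0-1)\neq 0$) and has full stabilizer $\langle\gamma\rangle$. Therefore the singular locus of $\mathcal{O} = \Omega/\langle\gamma\rangle$ is a single cone point of order $4$, namely the image of $(\zeta_t,\zeta_b) = (0,0)$, which is the special marked box. The only real computation here is the duality step, where one must correctly track the reordering built into (\ref{equation:reordering}) — in particular the transposition of $R^*$ with $S^*$ and of $p^*$ with $q^*$ — before carrying out the change of basis; the rest is bookkeeping and standard orbifold generalities.
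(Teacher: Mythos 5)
Your proposal is correct and follows essentially the same route as the paper: compute that $j$ acts on the parameter space $\Omega$ as $\gamma^2$ and that a duality (via the reordering in (\ref{equation:reordering})) acts as the order-$4$ rotation $\gamma$, then identify the quotient $\CM/\GG$ with $\Omega/\langle\gamma\rangle$ and locate the order-$4$ cone point at the special box $(\zeta_t,\zeta_b)=(0,0)$. The only cosmetic difference is that you compute the duality action by applying the identity polarity and renormalizing to a new $\Theta$-basis (your values $[-\zeta_b:1:0]$ and $[\zeta_t:0:1]$ check out), whereas the paper solves directly for the matrix of a duality carrying one box to another with the same vertices; both give the same $\gamma$-action.
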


\begin{proof}
The involution $j$ maps a $(\zeta_t,\zeta_b)$-overmarked box to a $(-\zeta_t, -\zeta_b)$-overmarked box, and hence the space of marked boxes modulo $\HH$ is isomorphic to:
$$\Omega / \langle - \mathrm{Id} \rangle = \Omega / \langle \gamma^2 \rangle$$
Now, for each $i=1,2$, let
$$
\Theta_i =((p_i, q_i, r_i, s_i; t_i, b_i), (P_i, Q_i, R_i, S_i; T_i, B_i))
$$
be a $(\zeta_{t,i},\zeta_{b,i})$-overmarked box. We claim that $\mathcal{D}(\Theta_1) = \Theta_2$ for some duality $\mathcal{D}$, induced by $D: \PP(V) \to \PP(V^*)$, if and only if:
\begin{equation*}\label{equation:order4symmetry}
\left(
\begin{matrix}
0 & -1 \\
1 & 0 \\
\end{matrix}
\right)
\left(
\begin{matrix}
\zeta_{t, 1} \\
\zeta_{b, 1} \\
\end{matrix}
\right)
=
\left(
\begin{matrix}
\zeta_{t, 2} \\
\zeta_{b, 2} \\
\end{matrix}
\right)
\end{equation*}
Suppose that $\mathcal{D}(\Theta_1) = \Theta_2$. Without loss of generality, we may assume that:
$$ p_1 = p_2, \quad q_1 = q_2, \quad r_1 = r_2, \quad s_1 = s_2 \quad$$
that is, the $\Theta_1$-basis of $V$ is the same as the $\Theta_2$-basis of $V$. Equip $V$ with the $\Theta_i$-basis of $V$ and $V^*$ with its dual basis. The matrix $A$ of the duality $D$ relative to these bases must be:
$$
A = \left(
\begin{matrix}
-1 & \zeta_{b,2} & -\zeta_{t,2} \\
\zeta_{t,2} & -\zeta_{t,2}\zeta_{b,2} & 1 \\
\zeta_{b,2} & -1 & \zeta_{t,2}\zeta_{b,2} \\
\end{matrix}
\right)
\in \GL(3,\mathbb{R})
$$
(up to scaling) because $D : \PP(V) \to \PP(V^*)$ satisfies the following (see (\ref{equation:reordering})):
$$ q_1^* = D(q_1) = P_2, \quad p_1^* = D(p_1) = Q_2, \quad r_1^* = D(r_1) = R_2, \quad s_1^* = D(s_1) = S_2 $$
Moreover, since $t_1^* = D(t_1) = T_2$ and $b_1^* = D(b_1) = B_2$, we have:
$$\zeta_{b,2} = \zeta_{t,1} \quad \textrm{and} \quad \zeta_{t,2} = -\zeta_{b,1}$$
as claimed. Similarly, there exists a duality $\mathcal{D}$ such that $\mathcal{D}(\Theta_1) = j(\Theta_2)$ if and only if:
\begin{equation*}
\left(
\begin{matrix}
0 & 1 \\
-1 & 0 \\
\end{matrix}
\right)
\left(
\begin{matrix}
\zeta_{t, 1} \\
\zeta_{b, 1} \\
\end{matrix}
\right)
=
\left(
\begin{matrix}
\zeta_{t, 2} \\
\zeta_{b, 2} \\
\end{matrix}
\right)
\end{equation*}
Therefore the space of marked boxes modulo $\GG$ is isomorphic to:
$$ \left(\, \Omega  /  \langle \gamma^2 \rangle \,\right) / \langle \gamma \rangle  = \Omega / \langle \gamma \rangle$$
which completes the proof.
\end{proof}

\begin{corollary}\label{cor:box}
In the setting of Proposition \ref{pro:stabilibox}, the space of \emph{convex} marked boxes modulo $\GG$ is isomorphic to a $2$-dimensional orbifold $\,]\!-\!1, 1[\,{}^2 / \langle \gamma \rangle$.
\end{corollary}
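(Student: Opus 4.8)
The plan is to deduce this directly from Proposition \ref{pro:stabilibox} by restricting the identification there to the sub-square that corresponds to convex boxes. Recall from the discussion preceding the statement that an overmarked box $\Theta$ is convex precisely when its parameters satisfy $\zeta_t,\zeta_b\in\,]\!-\!1,1[$, and that the $\HH$-orbit of $\Theta$ is determined by the pair $(\zeta_t,\zeta_b)\in\Omega$; in particular convexity of $\Theta$ depends only on its class in $\Omega$, hence only on its class in $\mathcal{O}=\Omega/\langle\gamma\rangle$, since the $\GG$-action on $\CM$ was seen to preserve convexity. Consequently the convex marked boxes modulo $\GG$ form exactly the subset of $\mathcal{O}$ that is the image of the open square $\mathcal{Q}:=\,]\!-\!1,1[^{2}\subset\Omega$.

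First I would observe that $\mathcal{Q}$ is invariant under the generator $\gamma$ of the deck group: $\gamma$ is the rotation of $\mathbb{R}^2$ through the angle $\tfrac{\pi}{2}$ about the origin, and $\mathcal{Q}$ is the square centred at the origin with sides parallel to the axes, which is manifestly carried to itself by this rotation. Therefore the restriction of the quotient projection $\Omega\to\Omega/\langle\gamma\rangle$ to $\mathcal{Q}$ identifies the image of $\mathcal{Q}$ in $\mathcal{O}$ with the orbifold quotient $\mathcal{Q}/\langle\gamma\rangle=\,]\!-\!1,1[^{2}/\langle\gamma\rangle$, and this is an open sub-orbifold of $\mathcal{O}$.

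Combining the two points, the space of convex marked boxes modulo $\GG$ is canonically identified with the image of $\mathcal{Q}$ in $\mathcal{O}$, which by the $\gamma$-invariance of $\mathcal{Q}$ equals $\,]\!-\!1,1[^{2}/\langle\gamma\rangle$; note moreover that $(0,0)\in\mathcal{Q}$, so the order-$4$ cone point of $\mathcal{O}$ (the class of special boxes) survives in this sub-orbifold, which is consistent with the stated description. There is essentially no obstacle here: the only steps requiring any care are recording that convexity is a $\GG$-invariant property of a marked box — already noted when the $\GG$-action on $\CM$ was defined — and the elementary verification that the order-$4$ rotation $\gamma$ preserves the open square $\,]\!-\!1,1[^{2}$, so that the orbifold structure of Proposition \ref{pro:stabilibox} restricts correctly.
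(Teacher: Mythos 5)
Your proposal is correct and follows exactly the route the paper intends: the corollary is an immediate restriction of Proposition \ref{pro:stabilibox} to the $\gamma$-invariant square $\,]\!-\!1,1[\,^2\subset\Omega$, using that convexity corresponds to $\zeta_t,\zeta_b\in\,]\!-\!1,1[$ and is preserved by the $\GG$-action. Nothing further is needed.
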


\begin{remark}
  The fact that only special marked boxes admit a non-trivial $\GG$-stabilizer is also proved in Barrera--Cano--Navarrete \cite[Lemma 3.1]{Cano}, in a nice, geometric way, involving Desargues' Theorem - in \cite{Cano}, special boxes are called \textit{good boxes}.
\end{remark}

\subsection{The group $\mathfrak{G}$ of elementary transformations of marked boxes} \label{grupoelem}

Let $$\Theta = ((p, q, r, s; t, b), (P, Q, R, S; T, B)) \in  \CSM.$$ The Pappus Theorem gives us two new elements of $\CSM$ that are images of $\Theta$ under two special permutations $\tau_1$ and $\tau_2$ on $\CSM$ (see Figure \ref{fig:tau}). These permutations are defined by:
\begin{eqnarray*}
& \tau_1(\Theta) = \left( (p, q, QR, PS; t, (pr)(qs)),(P, Q, qs, pr; T, (QR)(PS)) \right)\\
& \tau_2(\Theta) = \left( (QR, PS, s, r; (pr)(qs), b),(pr, qs, S, R; (QR)(PS), B) \right)
\end{eqnarray*}

\begin{figure}[ht!]\centering
\includegraphics[scale=0.6]{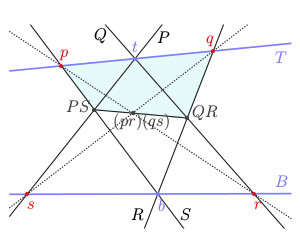}
\quad\quad
\includegraphics[scale=0.6]{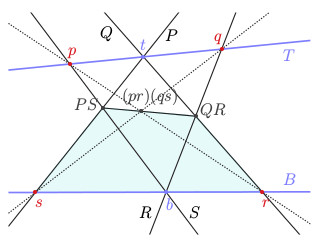}
\caption{Two permutations $\tau_1$ and $\tau_2$; the convex interiors of $\tau_1(\Theta)$ and $\tau_2(\Theta)$ are drawn in blue when $\Theta$ is convex.}
\label{fig:tau}
\end{figure}

There is also a natural involution, denoted by $i$, on $\CSM$ (see Figure \ref{fig:i}) given by:
$$i(\Theta)= ((s, r, p, q; b, t),(R, S, Q, P; B,T))$$

\begin{figure}[ht!]
\centering
\includegraphics[scale=0.65]{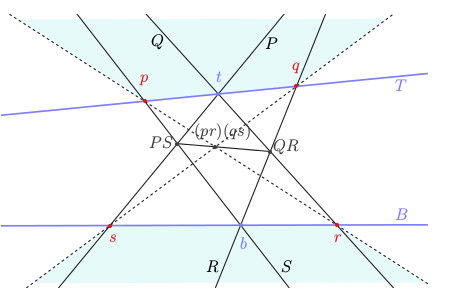}
\caption{The permutation $i$; the convex interior of $i(\Theta)$ is drawn in blue when $\Theta$ is convex.}
\label{fig:i}
\end{figure}

The transformations $ i $, $ \tau_1 $ and $ \tau_2 $ are permutations on $\CSM$ commuting with $j$, hence they also act on $\CM$. We denote by $\GP(\CM)$ the group of permutations on $\CM$.

\begin{remark}\label{rk:nesting}
If $[\Theta]$ is convex, then the new boxes $[i(\Theta)]$, $[\tau_1(\Theta)]$ and $[\tau_2(\Theta)]$ are also convex. The convex interiors of these marked boxes are highlighted in Figures \ref{fig:tau} and \ref{fig:i}. Since:
$$
\interior{[\tau_1(\Theta)]} \subsetneq \interior{[\Theta]}, \quad \interior{[\tau_2(\Theta)]} \subsetneq \interior{[\Theta]} \quad \textrm{and} \quad  \interior{[\tau_1(\Theta)]} \cap \interior{[\tau_2(\Theta)]} = \emptyset
$$
the semigroup of $ \GP(\CM) $ generated by $ \tau_1 $ and $ \tau_2 $ is free. Note that also $\interior{ [i(\Theta)]} \cap \interior{[\Theta]}= \emptyset$.
\end{remark}

\begin{remark} \label{oposto}
In the dual projective plane $\PP(V^*) $, the inclusions are reversed:
$$
  \interior{[\tau_1(\Theta)^*]} \supsetneq \interior{[\Theta^*]}  \quad \textrm{and} \quad \interior{[\tau_2(\Theta)^*]} \supsetneq \interior{[\Theta^*]}
$$
However, we still have $\interior{[i(\Theta)^*]} \cap \interior{[\Theta^*]} = \emptyset$.
\end{remark}

The permutations $i $, $ \tau_1 $ and $ \tau_2 $ on $\CM $ are called \emph{elementary transformations of marked boxes}.
These transformations can be applied iteratively on the elements of $\CM$, so $i $, $ \tau_1 $ and $ \tau_2 $ generate a semigroup $\mathfrak{G}$ of $\GP(\CM$).


\begin{lemma}\label{relacoes}
The following relations hold:
\begin{equation}\label{equation:relations}
i^2=1, \quad \tau_1 i \tau_2 = i, \quad \tau_2 i \tau_1 = i, \quad  \tau_1 i \tau_1 = \tau_2, \quad \tau_2 i \tau_2 = \tau_1
\end{equation}
\end{lemma}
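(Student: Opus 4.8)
\textbf{Proof plan for Lemma \ref{relacoes}.}

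The plan is to verify each of the five relations by a direct computation on an arbitrary overmarked box $\Theta = ((p,q,r,s;t,b),(P,Q,R,S;T,B))$, using the explicit formulas for $i$, $\tau_1$, $\tau_2$ given above together with the incidence relations defining an overmarked box. Since the relations are asserted in the group $\GP(\CM)$ of permutations of marked boxes, it suffices to check them as identities of permutations on $\CSM$ up to the involution $j$; but in fact I expect them to hold already on $\CSM$ (or, where $j$ intervenes, to produce exactly the $j$-twist that disappears in $\CM$), so I would work at the level of $\CSM$ and only invoke $j$ at the very end. The relation $i^2 = 1$ is immediate from the formula for $i$: applying the substitution $(p,q,r,s;t,b)\mapsto(s,r,p,q;b,t)$ twice returns the original tuple, and likewise on the dual side.

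For the remaining four relations I would set up a bookkeeping device: track what each generator does to the ordered data $(p,q,r,s;t,b)$ and, in parallel, to $(P,Q,R,S;T,B)$, remembering that the dual data is determined by $P = ts$, $Q = tr$, $R = bq$, $S = bp$, $T = pq$, $B = rs$. The key geometric input is the pair of auxiliary points $QR$ and $PS$ (the ``Pappus points'') appearing in $\tau_1,\tau_2$; I would first record the identities $QR = (tr)(bq)$ and $PS = (ts)(bp)$ and, dually, the lines $qs = (QR)?$ etc., so that composing $\tau_1$ after $\tau_2$ (or $i$ between them) can be expanded without reintroducing undefined symbols. Then, for instance, to check $\tau_1 i \tau_1 = \tau_2$ I would compute $\tau_1(\Theta)$, apply $i$ to swap top and bottom and reverse the role of $p,q$ versus $r,s$, then apply $\tau_1$ again, and match the resulting six points and six lines against the formula for $\tau_2(\Theta)$; the Pappus configuration guarantees that the doubly-iterated intersection points collapse back to the original $p,q,r,s,t,b$ in the right order. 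The relations $\tau_1 i \tau_2 = i$ and $\tau_2 i \tau_1 = i$ are handled the same way, and $\tau_2 i \tau_2 = \tau_1$ follows from $\tau_1 i \tau_1 = \tau_2$ by conjugating with $i$ and using $i^2 = 1$, so really only two of the four need independent verification.

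The main obstacle is purely notational rather than conceptual: keeping consistent track of the ordering conventions (in particular the $p \leftrightarrow q$, $r \leftrightarrow s$ swaps built into $i$ and into the definition of marked boxes via $j$) through a triple composition, and making sure that the intersection points produced by the second application of a $\tau$ are identified with the correct original vertices. I would mitigate this by choosing the $\Theta$-basis of Section \ref{sub:parameter}, so that $p,q,r,s$ have fixed coordinates $[-1:1:0],[1:1:0],[1:0:1],[-1:0:1]$ and $t=[\zeta_t:1:0]$, $b=[\zeta_b:0:1]$; then every point and line occurring in $i(\Theta)$, $\tau_1(\Theta)$, $\tau_2(\Theta)$ and their composites becomes an explicit function of $(\zeta_t,\zeta_b)$, and each relation reduces to a finite list of coordinate identities that can be checked mechanically. (Alternatively one may cite Schwartz \cite{SCHWARTZ}, where these relations are implicit in the identification of $\mathfrak{G}$ with $\MG$ via the presentation (\ref{equation:2ndpresentation}); but a self-contained coordinate check is cleaner.) Once the five identities are established on $\CSM$, passing to $\CM$ is automatic since $i,\tau_1,\tau_2$ all commute with $j$.
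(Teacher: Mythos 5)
Your plan is correct in substance, but it takes a different route from the paper: the paper disposes of Lemma \ref{relacoes} by citing Schwartz \cite[Lemma 2.3]{SCHWARTZ}, whereas you propose a self-contained verification. The verification does work, and in fact more easily than you suggest: once $\tau_1(\Theta)$, $\tau_2(\Theta)$, $i(\Theta)$ are written out, every intersection point or joining line appearing in a composite such as $\tau_1 i \tau_1$ is the intersection of two lines (or the join of two points) that already share one of the original elements $p,q,r,s,t,b,P,\dots$, so the composite collapses by pure incidence bookkeeping; no coordinates and no further appeal to Pappus are needed. (Pappus's theorem is what guarantees that $\tau_1(\Theta)$ and $\tau_2(\Theta)$ are overmarked boxes at all --- i.e.\ that the new bottom point $(pr)(qs)$ lies on the new bottom line $(QR)(PS)$ --- not what makes the group relations close up, so your attribution of the collapse to ``the Pappus configuration'' is slightly off, though harmless.) What the citation buys is brevity; what your computation buys is a proof that is independent of \cite{SCHWARTZ} and makes the $j$-bookkeeping explicit, which is genuinely useful since the analogous relations are reused for the deformed transformations $i^\lambda,\tau_1^\lambda,\tau_2^\lambda$ later in the paper.

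Two corrections to the details. First, your claim that $i^2=1$ holds already on $\CSM$ because ``applying the substitution twice returns the original tuple'' is false: applying $(p,q,r,s;t,b)\mapsto(s,r,p,q;b,t)$ twice yields $(q,p,s,r;t,b)$, so $i^2=j$ on $\CSM$, and likewise one finds, e.g., $\tau_1 i\tau_1 = j\,\tau_2$ on $\CSM$; all five relations hold only after passing to $\CM$. This is exactly the $j$-twist your parenthetical anticipates, so the plan survives, but the sentence asserting the tuple returns to itself must be amended. Second, $\tau_2 i\tau_2=\tau_1$ does not follow from $\tau_1 i\tau_1=\tau_2$ ``by conjugating with $i$ and using $i^2=1$'' alone; in the free product $\langle i\mid i^2\rangle\ast\langle\tau_1\rangle$ with $\tau_2:=\tau_1 i\tau_1$ the relation fails. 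However, working in the permutation group $\GP(\CM)$, the two relations $\tau_1 i\tau_2=i$ and $\tau_1 i\tau_1=\tau_2$ give $\tau_2=i\tau_1^{-1}i$ and $(\tau_1 i)^3=1$, from which both $\tau_2 i\tau_1=i$ and $\tau_2 i\tau_2=\tau_1$ follow formally; so your count that only two relations need independent geometric verification is right, just with a different justification.
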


\begin{proof}
See the proof in Schwartz \cite[Lemma 2.3]{SCHWARTZ}.
\end{proof}

Thus, by Lemma \ref{relacoes}, the inverses of $i $, $ \tau_1 $ and $ \tau_2 $ in $\GP(\CM)$ are:
$$
i^{-1}=i,  \quad \tau_1^{-1}=i\tau_2i, \quad \tau_2^{-1}=i\tau_1i
$$
Therefore, the semigroup $\mathfrak{G}$ is in fact a group, and this group $\mathfrak{G}$ is called the \emph{group of elementary transformations of marked boxes}.


\begin{remark} The actions of $\mathfrak{G}$ and $\GG$ on $\CM$ commute each other.
\end{remark}


\begin{lemma}\label{lemma:grouppresentation}
The group $\mathfrak{G}$ has the following presentation:
$$
\langle\, i, \tau_1, \tau_2 \mid  i^2=1,\; \tau_1 i \tau_2 = i,\; \tau_2 i \tau_1 = i,\; \tau_1 i \tau_1 = \tau_2,\; \tau_2 i \tau_2 = \tau_1 \, \rangle
$$
\end{lemma}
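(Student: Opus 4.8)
The plan is to show that the abstract group $\Gamma$ defined by the presentation
$\langle i, \tau_1, \tau_2 \mid i^2=1,\ \tau_1 i \tau_2 = i,\ \tau_2 i \tau_1 = i,\ \tau_1 i \tau_1 = \tau_2,\ \tau_2 i \tau_2 = \tau_1 \rangle$
is isomorphic to $\mathfrak{G}$, and then to identify $\Gamma$ with $\MG$. By Lemma \ref{relacoes}, the generators $i,\tau_1,\tau_2$ of $\mathfrak{G}$ satisfy exactly these relations, so there is a surjective homomorphism $\pi \colon \Gamma \twoheadrightarrow \mathfrak{G}$. The task is therefore to prove injectivity of $\pi$, and the cleanest route is to observe that the relation $\tau_1 i \tau_1 = \tau_2$ lets us eliminate $\tau_2$ as a generator: set $T_1 := i\tau_1$ and $T_2 := i\tau_2$ in $\Gamma$ (mirroring the substitution $T_1 = IR$, $T_2 = IR^2$ preceding (\ref{equation:2ndpresentation})). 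Then $\Gamma$ is generated by $i$ and $\tau_1$ alone, and the five relations translate into relations among $i$ and $\tau_1$; I would show they are equivalent to $i^2 = 1$ together with $(i\tau_1)^3 = 1$, i.e. that $\Gamma \cong \langle I, R \mid I^2 = 1, R^3 = 1\rangle = \MG$ via $i \leftrightarrow I$, $\tau_1 \leftrightarrow R^{-1}$ (or an appropriate choice matching the presentation (\ref{equation:1stpresentation})). Comparing (\ref{equation:1stpresentation}) and (\ref{equation:2ndpresentation}), this is precisely the known fact that the two displayed presentations of $\MG$ coincide, so $\Gamma \cong \MG$.

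Concretely, the key steps are: (i) In $\Gamma$, verify that $\tau_1 i \tau_1 = \tau_2$ and $i^2 = 1$ already imply the other three relations, so $\Gamma = \langle i, \tau_1 \mid i^2 = 1,\ R^3 = 1 \rangle$ where $R := (\tau_1 i)^{\pm 1}$; the relation $R^3 = 1$ comes from rewriting, e.g., $\tau_1 i \tau_2 = i$ as $\tau_1 i (\tau_1 i \tau_1) = i$, hence $(\tau_1 i)^2 \tau_1 = i$, hence $(\tau_1 i)^3 = i \tau_1^{-1} \cdot \tau_1 i \cdot \dots$ — I would carry out this short rewriting carefully to land exactly on $(\tau_1 i)^3 = 1$. (ii) Conclude $\Gamma \cong \MG$ abstractly. (iii) Recall from Section \ref{sec:pappus}/\ref{grupoelem} (Schwartz \cite{SCHWARTZ}) that the map $\mathfrak{G} \to \MG$ sending the elementary transformations to the corresponding generators of $\MG$ is an isomorphism — this is essentially the statement that $\mathfrak{G} \cong \MG$, already invoked in the introduction. (iv) Chase the triangle: $\Gamma \xrightarrow{\pi} \mathfrak{G} \xrightarrow{\sim} \MG$ and $\Gamma \xrightarrow{\sim} \MG$ agree on generators, so $\pi$ is an isomorphism, which is the assertion of the lemma.

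The main obstacle is step (i): showing that the five relations in the statement are genuinely equivalent (no more, no less) to $i^2 = 1$ and $(i\tau_1)^3 = 1$ after eliminating $\tau_2$. One must check both that every one of the five given relations follows from the two-generator relations, and — more delicately for injectivity — that nothing is lost, i.e. that the substitution $\tau_2 \mapsto \tau_1 i \tau_1$ induces a genuine isomorphism between the two presentations rather than merely a surjection. This is a finite but fiddly Tietze-transformation argument: one adds the generator relation $\tau_2 = \tau_1 i \tau_1$, uses it to eliminate $\tau_2$, simplifies the remaining relators by free reduction, and must confirm that the resulting presentation is literally $\langle I, R \mid I^2, R^3\rangle$. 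An alternative that sidesteps some of this: appeal directly to Schwartz \cite[Lemma 2.3 and the surrounding discussion]{SCHWARTZ}, where the identification of $\mathfrak{G}$ with $\MG$ via exactly this presentation is established; then the lemma is immediate. I would present the self-contained Tietze argument as the main proof and cite \cite{SCHWARTZ} as corroboration.
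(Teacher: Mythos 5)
Your proposal has the logical structure backwards, and the resulting gap is circularity. The content of the lemma is \emph{not} that the abstract group with these five relations is isomorphic to $\MG$ (that is the easy, purely combinatorial half, recorded afterwards in Corollary \ref{corollary:rho} via $\varrho_1 = i\tau_1$); it is that the \emph{concrete} group $\mathfrak{G}$ of permutations of $\CM$ satisfies \emph{no relations beyond} those derivable from the five listed. Equivalently, one must prove that the surjection $\pi\colon \Gamma \twoheadrightarrow \mathfrak{G}$ from the abstractly presented group is injective, i.e.\ that no nontrivial reduced word in $i,\tau_1,\tau_2$ acts trivially on marked boxes; a priori $\mathfrak{G}$ could be a proper quotient of $\mathbb{Z}/2\mathbb{Z} \ast \mathbb{Z}/3\mathbb{Z}$. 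Your steps (iii)--(iv) discharge exactly this burden by ``recalling'' that $\mathfrak{G}\to\MG$, sending the elementary transformations to the corresponding generators, is an isomorphism. But that statement is precisely what Lemma \ref{lemma:grouppresentation} together with Corollary \ref{corollary:rho} establishes; the remark in the introduction is an announcement of this result, not an independent input, so invoking it here is circular. Your fallback of citing Schwartz outsources the whole content of the lemma rather than proving it (the paper cites Schwartz only for Lemma \ref{relacoes}, i.e.\ that the relations \emph{hold}).

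What is missing is the geometric faithfulness argument, and this is what the paper's proof supplies. Given a purported relator $W$ not derivable from (\ref{equation:relations}), the relations are first used to bring $W$ to the normal form $i^a w i^b$ with $a,b\in\{0,1\}$ and $w$ a word in the semigroup generated by $\tau_1,\tau_2$ (this is the analogue of your Tietze step, and is the unproblematic part of your plan). The crux is then a ping-pong--type argument using the nesting of convex interiors from Remark \ref{rk:nesting}: for a convex marked box $[\Theta]$ one has $\interior{[\tau_k(\Theta)]} \subsetneq \interior{[\Theta]}$ and $\interior{[i(\Theta)]} \cap \interior{[\Theta]} = \emptyset$, so a nontrivial $w$ cannot fix $[\Theta]$, and $wi[\Theta]$ has convex interior disjoint from $\interior{[\Theta]}$, ruling out the case $b=1$ as well. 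Without some argument of this kind (or another proof that the action of the abstract group on $\CM$ is faithful), your proof does not establish the lemma, however carefully the two-generator presentation of $\Gamma$ is worked out.
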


\begin{proof}
By Lemma \ref{relacoes}, it only remains to see that (\ref{equation:relations}) is a complete set of relations for the group $\mathfrak{G}$ on the generators $i$, $\tau_1$ and $\tau_2$. Assume that a word $W$ in the symbols $i$, $\tau_1$ and $\tau_2$ is a relator, i.e. it defines the identity element in $\mathfrak{G}$, and that $W$ is \emph{not} derivable from (\ref{equation:relations}). Using the relations in (\ref{equation:relations}), the word $W$ may be reduced to the form $i^awi^b$, where $a, b \in \{0,1\}$ and $w$ is an element of the semigroup generated by $\tau_1$ and $\tau_2$. Since $W$ is a relator, we have that $i^awi^b[\Theta] = [\Theta]$ for every convex marked box $[\Theta]$. Since $\interior{i[\Theta]} \cap \interior{[\Theta]}= \emptyset$, the element $w$ is not trivial. By replacing $[\Theta]$ by $i[\Theta]$, we can further assume that $a=0$. Then we have two cases: either $b=0$ or $b=1$. If $b=0$, then $w[\Theta] = [\Theta]$, which is impossible since $\interior{w[\Theta]} \subsetneq \interior{[\Theta]}$ by Remark \ref{rk:nesting}. If $b=1$, then $wi\interior{[\Theta]}$ is contained in $\interior{i[\Theta]}$, therefore  disjoint from $\interior{[\Theta]}$: contradiction.
\end{proof}

\begin{corollary}\label{corollary:rho}
Let $\varrho_1 = i\tau_1$. Then $\mathfrak{G}$ admits the following group presentation:
$$\mathfrak{G}= \langle\, i, \varrho_1 \mid i^2=1,  \varrho_1^3=1 \,\rangle $$
In particular, it is isomorphic to the modular group $\MG \cong \mathbb{Z}/2\mathbb{Z} \ast \mathbb{Z}/3\mathbb{Z}$.
\end{corollary}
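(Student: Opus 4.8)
The plan is to derive the presentation $\langle\, i, \varrho_1 \mid i^2 = 1,\ \varrho_1^3 = 1 \,\rangle$ from the presentation of $\mathfrak{G}$ established in Lemma \ref{lemma:grouppresentation}, by a Tietze transformation argument. First I would introduce the new generator $\varrho_1 := i\tau_1$, so that $\tau_1 = i\varrho_1$ (using $i^2 = 1$). I also need to recover $\tau_2$ in terms of $i$ and $\varrho_1$: from the relation $\tau_1 i \tau_1 = \tau_2$ of (\ref{equation:relations}) we get $\tau_2 = (i\varrho_1) i (i\varrho_1) = i\varrho_1\varrho_1 = i\varrho_1^2$. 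Thus both $\tau_1$ and $\tau_2$ are words in $i$ and $\varrho_1$, so $\{i,\varrho_1\}$ generates $\mathfrak{G}$.

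Next I would check that the defining relations of the old presentation become consequences of $i^2 = 1$ and $\varrho_1^3 = 1$, and conversely. The relation $\varrho_1^3 = 1$ itself must be verified: $\varrho_1^3 = (i\tau_1)^3$, and using $\tau_1 i \tau_1 = \tau_2$ and $\tau_2 i \tau_1 = i$ one computes $(i\tau_1)(i\tau_1)(i\tau_1) = i(\tau_1 i \tau_1) i \tau_1 = i \tau_2 i \tau_1 = i \cdot i = 1$. Conversely, substituting $\tau_1 = i\varrho_1$, $\tau_2 = i\varrho_1^2$ into each of the four relations $\tau_1 i \tau_2 = i$, $\tau_2 i \tau_1 = i$, $\tau_1 i \tau_1 = \tau_2$, $\tau_2 i \tau_2 = \tau_1$ should yield identities that follow from $i^2=1$ and $\varrho_1^3 = 1$; for instance $\tau_1 i \tau_2 = (i\varrho_1) i (i\varrho_1^2) = i\varrho_1\varrho_1^2 = i\varrho_1^3 = i$, and $\tau_1 i \tau_1 = (i\varrho_1)i(i\varrho_1) = i\varrho_1^2 = \tau_2$, etc. This establishes that the two presentations define isomorphic groups. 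Finally, $\langle\, i, \varrho_1 \mid i^2 = 1,\ \varrho_1^3 = 1 \,\rangle$ is by definition the free product $\mathbb{Z}/2\mathbb{Z} \ast \mathbb{Z}/3\mathbb{Z}$, which is a standard presentation of $\MG$ (compare (\ref{equation:1stpresentation}): one may take $I \leftrightarrow i$ and $R \leftrightarrow \varrho_1$, or adjust by an inversion since $\varrho_1$ and $\varrho_1^{-1}$ play symmetric roles).

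I expect the only mildly delicate point to be the bookkeeping in the Tietze argument — specifically confirming that nothing beyond $i^2=1$ and $\varrho_1^3=1$ is needed, i.e. that the redundant relations genuinely collapse. Since Lemma \ref{lemma:grouppresentation} already gives a finite presentation and the substitutions above are explicit short words, this is routine: one simply verifies the finitely many relator-rewrites in each direction. There is no serious obstacle; the corollary is essentially a repackaging of Lemma \ref{lemma:grouppresentation} exhibiting $\mathfrak{G}$ in the familiar modular-group form.
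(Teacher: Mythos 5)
Your proposal is correct and follows essentially the same route as the paper: the paper's proof simply cites Lemma \ref{lemma:grouppresentation} together with the presentation (\ref{equation:2ndpresentation}) of $\MG$, and your explicit Tietze computations (with $\tau_1 = i\varrho_1$, $\tau_2 = i\varrho_1^2$) are precisely the verification implicit in that citation, mirroring $T_1 = IR$, $T_2 = IR^2$. All the relator checks you carry out are accurate, so nothing is missing.
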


\begin{proof}
It follows from Lemma \ref{lemma:grouppresentation} and (\ref{equation:2ndpresentation}).
\end{proof}

\begin{definition}
We denote by $\Xi : \MG \to  \mathfrak{G}$ the isomorphism given by:
$$\Xi(I)=i, \quad \Xi(T_1)=\tau_1 \quad \textrm{and} \quad \Xi(T_2)=\tau_2$$
(see (\ref{equation:2ndpresentation})). Notice that:
$$\Xi(R) = \varrho_1$$
\end{definition}

\section{Schwartz representations}
\label{sec:Schwartz}

\subsection{Farey geodesics labeled by the $\mathfrak{G}$-orbit of a marked box} \label{geradores}

Given a convex marked box $[\Theta_0] \in \CM$,
we can label each Farey geodesic $e$ by an element $[\Theta](e)$ of the $\mathfrak{G}$-orbit of $[\Theta_0]$ as follows:
first assign the label $[\Theta](e_0)=[\Theta_0]$ for the geodesic $e_0 = [\infty, 0]$,
and then for every geodesic $e = \gamma*e_0$
with $\gamma \in \MG$, define $[\Theta](e) = \Xi(\gamma) [\Theta_0]$.
More generally, for any Farey geodesic $e \in \mathcal{L}_o$ and any $\gamma \in \MG$:
$$[\Theta](\gamma * e) = \Xi(\gamma) [\Theta](e).$$

\begin{remark} \label{aninha} Using this labeling, we can easily see the nesting property of the marked boxes in the $\mathfrak{G}$-orbit of $[\Theta]$ viewed in $\PP(V)$. Assume that the label $[\Theta](e_0)$ of $e_0$ is convex. For each oriented geodesic $e$, we denote by $H_e$ the half space of $\mathbb{H}^2$ on the left of $e$. Let $e$, $e'$ be two Farey geodesics. Then the following property is true: \emph{$H_{e'} \subset H_e$ if and only if the convex interior of $ [\Theta](e')$ is contained in the convex interior of $[\Theta](e)$.} In other words, $H_{e'} \subset H_e$ if and only if $e'$ is obtained from $e$ by applying a sequence of elementary
transformations $\tau_1$ and $\tau_2$. Moreover, $e$ and $e'$ have the same tail point (resp. head point) if and only if the marked boxes $[\Theta](e)$ and $[\Theta](e')$ have the same top (resp. bottom).
\end{remark}

\subsection{Construction of Schwartz representations}

Now we explain how to build Schwartz representations.

\begin{lemma} \label{lemaprinc}
Let $\Theta$ be a convex overmarked box. Then

\begin{enumerate}

\item[1.] there exists a unique projective transformation $\mathcal{A}_{\Theta}^0 \in \HH$ such that:
$$
\Theta \xrightarrow{\mathcal{A}_{\Theta}^0 }  j\varrho_1\Theta \xrightarrow{\mathcal{A}_{\Theta}^0 }  \varrho_1^2\Theta \xrightarrow{\mathcal{A}_{\Theta}^0 } j\Theta
$$

\item[2.] there exists a unique duality $\mathcal{D}_{\Theta}^0  \in \GG \setminus \HH$ such that:
$$
\Theta \xrightarrow{\mathcal{D}_{\Theta}^0 }  j i \Theta \xrightarrow{\mathcal{D}_{\Theta}^0 }  \Theta
$$
\end{enumerate}
Moreover, the duality $\mathcal{D}_{\Theta}^0$ happens to be
a polarity associated to a positive definite quadratic form (see Remark \ref{rk:polarity}).
\end{lemma}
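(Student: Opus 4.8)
The plan is to work entirely in a $\Theta$-basis and reduce both assertions to explicit, finite linear algebra over the two parameters $(\zeta_t,\zeta_b)\in\;]\!-\!1,1[^2$. Recall from Section~\ref{sub:parameter} that in a $\Theta$-basis the points $p,q,r,s$ and the lines $P,Q,R,S,T,B$ of $\Theta$ have fixed coordinates, and $t,b$ depend on $(\zeta_t,\zeta_b)$; moreover the action of $\varrho_1=i\tau_1$ on overmarked boxes, and of $i$ itself, is completely explicit via the formulas in Section~\ref{grupoelem} together with the $\GG$-action formulas $(\ref{homeo trans})$, $(\ref{equation:reordering})$. So each of $j\varrho_1\Theta$, $\varrho_1^2\Theta$, $j\Theta$, $ji\Theta$ is an overmarked box whose coordinates can be written down as rational functions of $(\zeta_t,\zeta_b)$ (one checks along the way that these images are again convex, so genericity of all incidences is automatic and the rational functions never blow up).

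First, for part~(1): a projective transformation $\mathcal T\in\HH$ is determined by the underlying $g\in\PGL(3,\mathbb R)$, and requiring $\mathcal A^0_\Theta(\Theta)=j\varrho_1\Theta$ amounts to requiring $g$ to send the ordered $4$-tuple $(p,q,r,s)$ to the corresponding ordered $4$-tuple of $j\varrho_1\Theta$. Since no three of $p,q,r,s$ are collinear (they are the vertices of the quadrilateral bounding $\interior{[\Theta]}$) and the same holds for the image four points, there is a \emph{unique} $g$ in $\PGL(3,\mathbb R)$ doing this — that is the standard fact that $\PGL(3,\mathbb R)$ acts simply transitively on projective frames. This already gives existence and uniqueness of a candidate $\mathcal A^0_\Theta$ such that $\mathcal A^0_\Theta(\Theta)=j\varrho_1\Theta$. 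One then must verify the remaining two arrows, i.e. $\mathcal A^0_\Theta(j\varrho_1\Theta)=\varrho_1^2\Theta$ and $\mathcal A^0_\Theta(\varrho_1^2\Theta)=j\Theta$; equivalently, writing $M$ for the matrix of $g$, one checks $M^3=\mathrm{Id}$ in $\PGL(3,\mathbb R)$ and that the frame of $\Theta$ is carried correctly around the $3$-cycle. The cleanest route is to use the structure already present: since $\varrho_1^3=1$ in $\mathfrak G$ and the labeling/Farey picture of Section~\ref{sec:Schwartz} shows that $\Theta$, $j\varrho_1\Theta$, $\varrho_1^2\Theta$, $j\Theta$ are the four marked boxes attached to the three edges of the Farey triangle $\Delta_0$ under the $\langle R\rangle$-action (Remark~\ref{rek:2action}), the three target frames are cyclically permuted by a common projectivity; one verifies this by a direct computation in the $\Theta$-basis, exhibiting $M$ explicitly and checking $M^3=\lambda\,\mathrm{Id}$.

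For part~(2), a duality $\mathcal D\in\GG\setminus\HH$ is given by $A\in\PGL(3,\mathbb R)$ via Remark~\ref{rk:polarity}, and the reordering $(\ref{equation:reordering})$ tells us precisely which points of $\Theta$ map to which lines of $ji\Theta$. Imposing $\mathcal D^0_\Theta(\Theta)=ji\Theta$ again pins down $A$ uniquely (four point-to-line incidence conditions in general position), and one must then check the second arrow $\mathcal D^0_\Theta(ji\Theta)=\Theta$, which by Remark~\ref{rk:notCSM} translates into $\mathcal D^0_\Theta\circ\mathcal D^0_\Theta$ acting as the identity on the relevant marked box, i.e. $(\trans A^{-1})A$ is scalar, so $A$ is (up to scalar) symmetric — hence $\mathcal D^0_\Theta$ is a polarity. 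Finally, to see the associated quadratic form is positive definite, I would write $A$ explicitly in the $\Theta$-basis as a symmetric matrix with entries rational in $(\zeta_t,\zeta_b)$ and compute its leading principal minors, checking they are all positive (equivalently all eigenvalues of the same sign) precisely when $\zeta_t,\zeta_b\in\;]\!-\!1,1[$; convexity of $[\Theta]$ is exactly what forces the definiteness, and geometrically the quadric is the conic ``inscribed'' in the convex interior that the polarity must preserve.

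\textbf{Main obstacle.} The conceptual content is light — everything is simple transitivity of $\PGL(3,\mathbb R)$ on frames plus ``a symmetric matrix is a polarity'' — so the real work, and the only place errors creep in, is bookkeeping: correctly tracking the $j$'s and the blue reordering in $(\ref{equation:involution})$ and $(\ref{equation:reordering})$ through $\varrho_1=i\tau_1$ so that the three arrows genuinely close up, and then carrying out the minor/eigenvalue computation for $A$ cleanly enough to read off definiteness. I expect the sign analysis of the principal minors of $A$ (showing the equivalence with $|\zeta_t|,|\zeta_b|<1$) to be the fussiest step; a good sanity check is the special case $(\zeta_t,\zeta_b)=(0,0)$, where $A$ should reduce to (a scalar multiple of) the standard diagonal form $\mathrm{diag}(1,1,1)$ or $\mathrm{diag}(1,-1,-1)$ composed with the coordinate permutation, making positive definiteness transparent, and then invoke continuity/connectedness of $]\!-\!1,1[^2$ together with non-vanishing of $\det A$ to propagate definiteness to the whole square.
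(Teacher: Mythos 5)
Your overall strategy is the same as the paper's: uniqueness comes from the fact that an overmarked box contains a projective frame (so at most one projective transformation, and at most one duality, can realize a given arrow), and existence is settled by an explicit computation in the $\Theta$-basis — the paper simply exhibits the matrices $A_\Theta$ in (\ref{equation:matrixA}) and the symmetric $D_\Theta$ in (\ref{equation:matrixD}) and checks them, and your plan defers to exactly this kind of computation, so to that extent it is fine. Two caveats on part~1, though. Requiring $\mathcal{A}^0_\Theta(\Theta)=j\varrho_1\Theta$ does \emph{not} ``amount to'' sending the frame $(p,q,r,s)$ to the corresponding frame: the frame condition only pins down the candidate $g$, and the actual content of the lemma is that this $g$ also sends $t$ and $b$ to the top and bottom points of $j\varrho_1\Theta$ (the six lines then come for free, being joins of the six points); this is the verification your write-up should not gloss over. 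On the other hand, you can spare yourself the check of the remaining two arrows and of $M^3=\lambda\,\mathrm{Id}$: since a projective transformation commutes with $i,\tau_1,\tau_2$ and with $j$ on overmarked boxes, the first arrow formally propagates, e.g. $\mathcal{A}^0_\Theta(j\varrho_1\Theta)=j\varrho_1\,\mathcal{A}^0_\Theta(\Theta)=\varrho_1^2\Theta$.

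The genuine gap is in your mechanism for the ``moreover'' of part~2. You argue that the second arrow forces $\mathcal{D}^0_\Theta\circ\mathcal{D}^0_\Theta$ to act as the identity on the marked box, hence $\trans{A}^{-1}A$ scalar, hence $A$ symmetric. But if you track (\ref{equation:reordering}) you find that as soon as the first arrow $\mathcal{D}^0_\Theta(\Theta)=ji\Theta$ holds componentwise, the second application is completely determined and automatically returns $j\Theta$, i.e. the marked box $[\Theta]$ — consistent with Remark \ref{rk:notCSM}, which gives $\mathcal{D}(\mathcal{D}(\Theta))=j\bigl((\trans{A}^{-1}A)(\Theta)\bigr)$. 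So at the level of marked boxes the second arrow is a consequence of the first and imposes no condition at all on $A$; it cannot be the source of the polarity statement (and read literally on overmarked boxes, ``$\mathcal{D}^2(\Theta)=\Theta$'' would require a projectivity carrying $\Theta$ to $j\Theta$, which exists only for special boxes). The repair is cheap: either read off symmetry from the explicit matrix in the $\Theta$-basis, as the paper does, or observe that the first arrow alone already forces it, because it yields both $Ax\sim X$ and $\trans{A}x\sim X$ for each of the six pairs $(p,R),(q,S),(r,P),(s,Q),(t,B),(b,T)$, so $A$ and $\trans{A}$ agree on a projective frame, whence $\trans{A}=\pm A$, and an invertible antisymmetric $3\times 3$ matrix does not exist. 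Your plan for positive definiteness is fine: with the paper's $D_\Theta$ the leading principal minors are $1$, $1-\zeta_t^2$ and $(1-\zeta_t^2)(1-\zeta_b^2)$, positive exactly when $\zeta_t,\zeta_b\in\,]\!-\!1,1[$, i.e. exactly when $\Theta$ is convex, which also confirms your sanity check at $(\zeta_t,\zeta_b)=(0,0)$.
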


\begin{proof}
The proof is in Schwartz \cite[Theorem $2.4$]{SCHWARTZ} (see also Val{\'e}rio \cite[Lemma $3.1$]{PV} for more details). The uniqueness follows from the fact
that for two overmarked boxes $\Theta_1$ and $\Theta_2$, there exists at most one projective transformation and one polarity respectively mapping $\Theta_1$ to $\Theta_2$. Let us establish the existence. Equip $V$ as usual with the $\Theta$-basis of $V$ and $V^*$ with its dual basis.
Then a straightforward computation shows that the matrix:
\begin{equation}\label{equation:matrixA}
A_\Theta = \left(\begin{array}{ccc}
  \zeta_t\zeta_b-1 & \zeta_t(1-\zeta_t\zeta_b) & \zeta_b-\zeta_t \\
  \zeta_b-\zeta_t & 1-\zeta_t\zeta_b & \zeta_t\zeta_b-1 \\
  0 & 1-\zeta_t^2 & 0
\end{array}\right)
\end{equation}
provides a projective transformation $\mathcal{A}_{\Theta}^0$ as required,
whereas the symmetric matrix:
\begin{equation}\label{equation:matrixD}
D_\Theta = \left(\begin{array}{ccc}
  1 & -\zeta_t & -\zeta_b \\
  -\zeta_t & 1 & \zeta_t\zeta_b \\
  -\zeta_b & \zeta_t\zeta_b & 1
\end{array}\right)
\end{equation}
provides the polarity $\mathcal{D}_{\Theta}^0$, and it is positive definite since $\zeta_t, \zeta_b \in  \,]\!-\!1, 1[\,{}^2$.
\end{proof}

\begin{remark}
At the level of marked boxes, we have:
\begin{align*}
&  [\Theta] \xrightarrow{\mathcal{A}_{\Theta}^0 }  \varrho_1[\Theta] \xrightarrow{\mathcal{A}_{\Theta}^0 }  \varrho_1^2[\Theta] \xrightarrow{\mathcal{A}_{\Theta}^0 } [\Theta]  \\
& [\Theta] \xrightarrow{\mathcal{D}_{\Theta}^0 }  i[\Theta] \xrightarrow{\mathcal{D}_{\Theta}^0 }  [\Theta]
\end{align*}

Since the involution $j$ commutes with projective transformations and polarities, we have:
$$
j\Theta \xrightarrow{\mathcal{A}_{\Theta}^0 }  j\varrho_1j\Theta \xrightarrow{\mathcal{A}_{j\Theta}^0 }  \varrho_1^2j\Theta \xrightarrow{\mathcal{A}_{\Theta}^0 } \Theta
$$
$$
j\Theta \xrightarrow{\mathcal{D}_{\Theta}^0 }  jij\Theta \xrightarrow{\mathcal{D}_{\Theta}^0 }  j\Theta
$$
It follows that $\mathcal{A}_{\Theta}^0$ and $\mathcal{D}_{\Theta}^0$ only depend on the marked box $[\Theta]$.
\end{remark}

\begin{theorem}[\textbf{Schwartz representation Theorem}]\label{principalSch}
Let $[\Theta]$ be a convex marked box, and label the Farey geodesics in $\mathcal{L}_o$ as in Section \ref{geradores} so that $[\Theta](e_0) = [\Theta]$. Then there exists a faithful representation $\rho_\Theta : \MG \to \GG$ such that for every Farey geodesic $e \in \mathcal{L}_o$ and every $\gamma\in \MG$, the following $\rho_\Theta$-equivariant property holds:
$$[\Theta](\gamma e)=\rho_\Theta(\gamma)([\Theta](e))$$
\end{theorem}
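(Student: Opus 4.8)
The plan is to build $\rho_\Theta$ on the two generators $I, R$ of $\MG$ (presentation (\ref{equation:1stpresentation})) using the two transformations produced by Lemma \ref{lemaprinc}, and then to verify both the relations defining $\MG$ and the equivariance property by a single induction over the $\mathfrak{G}$-orbit of $[\Theta]$. Concretely, I would set $\rho_\Theta(R) = \mathcal{A}_{\Theta}^0$ and $\rho_\Theta(I) = \mathcal{D}_{\Theta}^0$, where $\mathcal{A}_{\Theta}^0 \in \HH$ and $\mathcal{D}_{\Theta}^0 \in \GG \setminus \HH$ are the unique transformations from Lemma \ref{lemaprinc}. The two cyclic relations $I^2 = 1$ and $R^3 = 1$ must then be checked: since by Lemma \ref{lemaprinc} we have $(\mathcal{A}_{\Theta}^0)^3$ fixing $\Theta$ and $(\mathcal{D}_{\Theta}^0)^2$ fixing $\Theta$, and since (by the uniqueness clause in the proof of Lemma \ref{lemaprinc}) a projective transformation or polarity fixing an overmarked box is the identity, we get $(\mathcal{A}_{\Theta}^0)^3 = 1$ and $(\mathcal{D}_{\Theta}^0)^2 = 1$ in $\GG$. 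By the presentation (\ref{equation:1stpresentation}) this already produces a well-defined homomorphism $\rho_\Theta : \MG \to \GG$.

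Next I would prove the equivariance $[\Theta](\gamma e) = \rho_\Theta(\gamma)([\Theta](e))$ for all $\gamma \in \MG$ and all $e \in \mathcal{L}_o$. Because both the labeling (via $\Xi$) and $\rho_\Theta$ are defined through the $*$-action generated by $I, T_1, T_2$, and because $[\Theta](\gamma * e) = \Xi(\gamma)[\Theta](e)$ by construction, it suffices to check the identity for $\gamma$ a generator and $e = e_0$, and then propagate it using that the $*$-action of $\MG$ on $\mathcal{L}_o$ is simply transitive and commutes with the $\mathfrak{G}$-action (Remark after the figure on the $*$-action, and the commuting-actions remark). The base cases are exactly the content of Lemma \ref{lemaprinc} read at the level of marked boxes: $\mathcal{A}_{\Theta}^0$ realizes $[\Theta] \mapsto \varrho_1[\Theta] = \Xi(R)[\Theta] = [\Theta](R*e_0) = [\Theta](R e_0)$ (using Remark \ref{rek:2action} that $R(e_0) = R*e_0$), and similarly $\mathcal{D}_{\Theta}^0$ realizes $[\Theta] \mapsto i[\Theta] = [\Theta](I*e_0) = [\Theta](I e_0)$. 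A short computation with the relations in $\mathfrak{G}$ (Lemma \ref{relacoes}) extends this from $R, I$ to $T_1 = IR$ and $T_2 = IR^2$, giving the generator-level statements $[\Theta](T_k * e_0) = \rho_\Theta(T_k)[\Theta](e_0)$; then a word-length induction using the cocycle-type identity $[\Theta]((\gamma_1\gamma_2)*e) = \Xi(\gamma_1\gamma_2)[\Theta](e)$ finishes the general case.

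Finally, faithfulness: I would deduce it from the nesting behavior recorded in Remark \ref{aninha} and Remark \ref{rk:nesting}. If $\rho_\Theta(\gamma) = \mathrm{id}$ for some $\gamma \neq 1$, then applying equivariance at $e_0$ gives $[\Theta](\gamma e_0) = [\Theta](e_0)$, i.e.\ $\Xi(\gamma)[\Theta_0] = [\Theta_0]$; but the freeness of the $\tau_1,\tau_2$-semigroup (Remark \ref{rk:nesting}) together with the argument already used in the proof of Lemma \ref{lemma:grouppresentation} — comparing convex interiors in $\PP(V)$, using $\interior{\tau_k[\Theta]} \subsetneq \interior{[\Theta]}$ and $\interior{i[\Theta]} \cap \interior{[\Theta]} = \emptyset$ — shows $\Xi(\gamma)$ acts without fixed points on the $\mathfrak{G}$-orbit unless $\gamma = 1$. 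Hence $\gamma = 1$, contradiction.

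I expect the main obstacle to be the careful bookkeeping in the equivariance step: one must keep straight the distinction between the isometric $\MG$-action on $\mathcal{L}_o$ (which appears in the statement as $\gamma e$) and the $*$-action (which defines the labeling), reconciled only at $e_0$ via Remark \ref{rek:2action}, and one must track the reordering convention (\ref{equation:reordering}) for how dualities act on marked boxes so that the duality $\mathcal{D}_{\Theta}^0$ genuinely sends $[\Theta]$ to $i[\Theta]$ rather than to $ji[\Theta]$. Once the base cases at $e_0$ are pinned down correctly, the induction is routine; verifying $(\mathcal{A}_{\Theta}^0)^3 = (\mathcal{D}_{\Theta}^0)^2 = 1$ and the positive-definiteness of $D_\Theta$ is the explicit matrix computation already sketched in Lemma \ref{lemaprinc}.
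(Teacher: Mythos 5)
Your proposal follows essentially the same route as the paper's proof: define $\rho_\Theta$ on the generators $I,R$ via Lemma \ref{lemaprinc}, verify equivariance at $e_0$ using $Re_0=R*e_0$ and $Ie_0=I*e_0$ (Remark \ref{rek:2action}), propagate it by the commutation of the two $\MG$-actions on $\mathcal{L}_o$ and of the $\GG$- and $\mathfrak{G}$-actions on $\CM$, and finish by induction on word length in $R,I$. The extra items you sketch (checking $(\mathcal{A}_\Theta^0)^3=(\mathcal{D}_\Theta^0)^2=1$ and deducing faithfulness from the nesting of convex interiors) are left implicit in the paper, which relies on Lemma \ref{lemaprinc} and Schwartz's original argument, and your sketches of them are sound, provided you check the relation $(\mathcal{A}_\Theta^0)^3=1$ at the \emph{overmarked}-box level (four points in general position), since a non-identity projective transformation can fix the marked box $[\Theta]$ while inducing $j$.
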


\begin{proof} Recall (see (\ref{equation:1stpresentation})) that:
$$ \MG = \langle\, I,R \mid I^2=1, R^3=1 \,\rangle$$
Therefore there exists a representation $\rho_\Theta: \MG \to \GG$ such that:
$$\rho_\Theta(R)=\mathcal{A}_{\Theta}^0 \in \HH \;\; \mbox{ and } \;\; \rho_\Theta(I)=\mathcal{D}_{\Theta}^0 \in \GG \setminus \HH$$
where $\mathcal{A}_{\Theta}^0$ and $\mathcal{D}_{\Theta}^0$ are defined in Lemma \ref{lemaprinc}. Once observed the identities $Re_0 = R*e_0$ and $Ie_0 = I*e_0$ (see Remark \ref{rek:2action}), the $\rho_\Theta$-equivariant property is obviously satisfied for $e = e_0$ and $\gamma = R$ or $I$. Let now $e$ be any other
Farey geodesic. Then:
\begin{eqnarray*}
  [\Theta](Re) &=& [\Theta](R(\gamma*e_0)) \mbox{ for some $\gamma$ in $\MG$}\\
   &=&  [\Theta](\gamma*(Re_0)) \mbox{ (the two actions of $\MG$ on $\mathcal{L}_o$ commute)}\\
   &=&  \Xi(\gamma) [\Theta](Re_0) \mbox{ (by the construction of the labeling on $\mathcal{L}_o$)} \\
   &=&  \Xi(\gamma) \mathcal{A}_\Theta^0([\Theta](e_0)) \mbox{ (the $\rho_\Theta$-equivariant property holds for $\gamma = R, e = e_0$)}\\
   &=& \mathcal{A}_\Theta^0 (\Xi(\gamma) [\Theta](e_0)) \mbox{ (the actions of $\HH$ and $\mathfrak{G}$ on $\CM$ commute)}\\
   &=& \mathcal{A}_\Theta^0 ([\Theta](\gamma*e_0)) \mbox{ (by the construction of the labeling on $\mathcal{L}_o$)}\\
   &=& \rho_\Theta(R)([\Theta](e)) \mbox{ (by the definition of $\rho_\Theta$)}
\end{eqnarray*}
Hence, the $\rho_\Theta$-equivariant property holds for $\gamma = R$ and for every $e \in \mathcal{L}_o$. Similarly, we can check this property for $\gamma = I$, applying the fact that the actions of $\GG$ and $\mathfrak{G}$ on $\CM$ commute for the third-to-last step (whereas \emph{for $\gamma = R$, we only need the fact that $\mathfrak{G}$ commutes with projective transformations}).

Now, the general case follows from the fact that $R$ and $I$ generate $\MG$: If $\gamma$ is an element of $\MG$ for which $[\Theta](\gamma e)=\rho_\Theta(\gamma)([\Theta](e))$ for every $e \in \mathcal{L}_o$, then:
\begin{eqnarray*}
  [\Theta](\gamma Ie) &=& \rho_\Theta(\gamma)([\Theta](Ie)) \\
   &=& \rho_\Theta(\gamma) \rho_\Theta(I)([\Theta](e)) \\
   &=&  \rho_\Theta(\gamma I)([\Theta](e))
\end{eqnarray*}
and similarly $[\Theta](\gamma Re) = \rho_\Theta(\gamma R)([\Theta](e))$, completing the proof by induction on the word length of $\gamma$ in the letters $R$ and $I$.
\end{proof}

We call $\rho_\Theta: \MG \to \GG$ the \emph{Schwartz representation} of $\MG$.

\subsection{The Schwartz map}\label{Schwatzmap}

Recall that in Section \ref{geradores}, for each convex marked box $[\Theta]$, we attach the labels, which are the elements of the orbit of $[\Theta]$ under $\mathfrak{G}$, to the Farey geodesics. As we mentioned in Remark \ref{aninha}, two Farey geodesics have the same tail point in $\partial\mathbb{H}^2$ if and only if the labels of these geodesics are marked boxes with the same top flag. Therefore, it gives us two $\rho_\Theta$-equivariant maps $\varphi: \mathbb{Q} \cup \{ \infty \} \to \PP(V)$ and $\varphi^* : \mathbb{Q} \cup \{ \infty \} \to \PP(V^*)$, and moreover the map $\varphi$ (resp. $\varphi^*$) can be extended to an injective $\rho_\Theta$-equivariant continuous map $\varphi_o : \partial \mathbb{H}^2 \to \PP(V)$ (resp. $\varphi_o^* : \partial \mathbb{H}^2 \to \PP(V^*)$) (see Schwartz \cite[Theorem $3.2$]{SCHWARTZ}). The maps $\varphi_o$ and $\varphi^*_o$ combine to a $\rho_\Theta$-equivariant map, which we call the \emph{Schwartz map}:
$$
\Phi := (\varphi_o,\varphi^{\ast}_o): \partial \mathbb{H}^2 \rightarrow \FF\subset \PP(V)\times \PP(V^*)
$$

\subsection{The case of special marked boxes}
\label{sub:specialcase}

We closely look at the Schwartz representation $\rho_\Theta$ and the Schwartz map $\Phi$ in the case when $[\Theta]$ is a special marked box. In the $\Theta$-basis of $V$, the projective transformation $\mathcal A_\Theta^0$ corresponds to:
$$A = \left(\begin{array}{c|cc}
  1 & 0 & 0 \\
  \hline
  0 & -1 & 1 \\
  0 & -1 & 0
\end{array}\right)$$
whereas the polarity $\mathcal{D}_{\Theta}^0$ is expressed by the identity matrix. Hence, the image of $IRI$ under $\rho_{\Theta}$ corresponds to the inverse of the transpose of $A$:
$$\left(\begin{array}{c|cc}
  1 & 0 & 0 \\
 \hline
  0 & 0 & 1 \\
  0 & -1 & -1
\end{array}\right)$$
Since the elements $R$ and $IRI$ of $\MG_o$ are the equivalence classes of the matrices:
$$
\left(\begin{array}{cc}
   -1 & 1 \\
   -1 & 0
\end{array}\right) \quad \textrm{and} \quad
\left(\begin{array}{cc}
   0 & 1 \\
   -1 & -1
\end{array}\right)
$$
respectively (see (\ref{equation:1stpresentation})), we see that the restriction of $\rho_\Theta$ to $\MG_o$ is a linear action of $\MG_o$ on the affine plane $\{ [x : y : z] \in \PP(V) \mid x\neq0 \}$. It holds true only for $\MG_o$, not for $\MG$: The image of $I$ under $\rho_{\Theta}$ is the polarity associated to an inner product on $V$ for which a $\Theta$-basis of $V$ is orthonormal.

In the case when $[\Theta]$ is special, the map $\varphi_o$ defined in Section \ref{Schwatzmap} is the canonical identification between $\partial\mathbb{H}^2$ and the line $L := \{x = 0 \}$ in $\PP(V)$, and the image of the Schwartz map $\Phi$ is the set of flags $([v], [v^*])$ such that $[v] \in L$ and $[v^*]$ is the line though the points $[v]$ and $[1:0:0]$.

\subsection{Opening the cusps} \label{sub:cocompact}
In the previous subsections, the role of the Farey geodesics is purely combinatorial, except for the definition of the Schwartz map.
We can replace the Farey lamination $\mathcal{L}_o$, which is the set of Farey geodesics, by any other geodesic lamination $\mathcal{L}$ obtained by ``opening the cusps'' in a $3$-fold symmetric way
(see Figure \ref{fig:openingcusp}).
The ideal triangles become hyperideal triangles, which means that these triangles are bounded by three geodesics in $\mathbb{H}^2$, but now these geodesics have no common point in $\partial \mathbb{H}^2$. The lamination $\mathcal{L}$ is still preserved by a discrete subgroup $\Gamma $ of $\mathrm{Isom}(\mathbb{H}^2)$, which is isomorphic to $\MG$ but which is now convex cocompact.

One way to operate this modification is to pick up a hyperideal triangle $\Delta$ containing $\Delta_0$ such that $\Delta$ still admits
the side $e_0 = [\infty, 0]$ but the other two sides are pushed away on the right. The discrete group $\Gamma$ is then generated by $I$
and the unique (clockwise) rotation $R_*$ of order $3$ preserving $\Delta$. Here, we just have to adjust $\Delta$ so that the projection of the ``center'' of the rotation $R_*$ on $e_0 = [\infty, 0]$ is the fixed point of $I$.

\begin{figure}[ht!]
\centering
\includegraphics[scale=0.25]{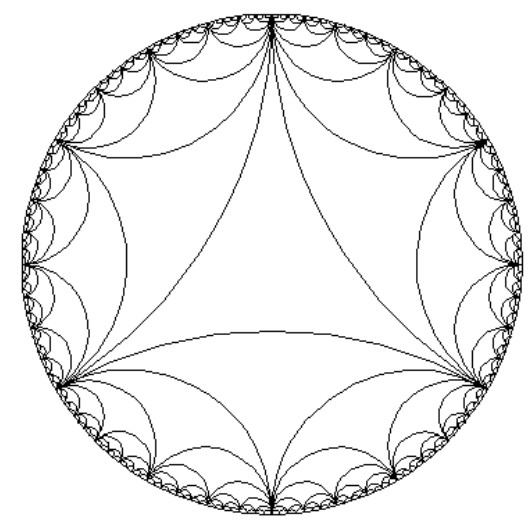}
\quad\quad
\includegraphics[scale=0.20]{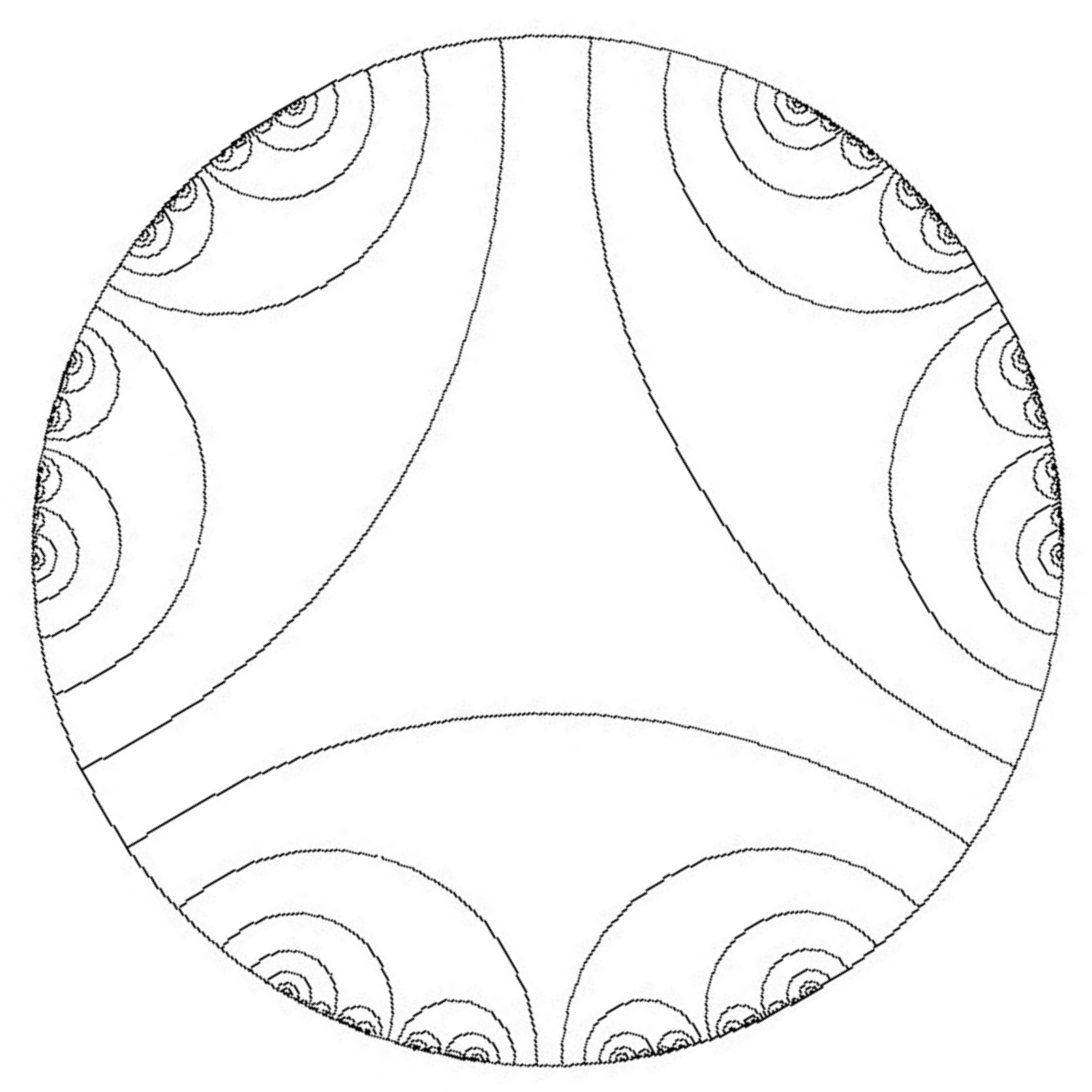}
\footnotesize
\put(-215,22){$0$}
\put(-174,95){$\infty$}
\put(-173,38){$\Delta_0$}
\put(-95,25){$0$}
\put(-57,95){$\infty$}
\put(-49,47){ $R_*$}
\put(-51,48){$\cdot$}
\put(-53.5,48){\small $\circlearrowright$}
\put(-71,55){$I$}
\put(-62,53){$\cdot$}
\put(-64.5,53){\small $\circlearrowright$}
\put(-52.5,36){$\Delta$}
\caption{The Farey lamination $\mathcal{L}_o$ and the new lamination $\mathcal{L}$ obtained by opening the cusps.}
\label{fig:openingcusp}
\end{figure}

All the discussions in the previous subsections remain true if we interpret the notion of ``rotating around the head or tail point'' in the appropriate (and obvious) way.
In particular, in the quotient surface $\Gamma\backslash\mathbb{H}^2$, the leaves of $\mathcal L$ project to wandering geodesics connecting two hyperbolic ends, and for two leaves $e$, $e'$ of $\mathcal{L}$, the labels $[\Theta](e)$ and $[\Theta](e')$ have the same bottom if and only if $e$ and $e'$ have tails in the same connected component of $\partial\mathbb{H}^2 \setminus \Lambda_\Gamma$, where $\Lambda_\Gamma$ is the limit set of $\Gamma$. As a consequence, we still have:

\begin{theorem}[\textbf{Modified Schwartz representation Theorem}]\label{principalSch2}

Let $[\Theta]$ be a convex marked box. Label the oriented leaves of $\mathcal L$, in a way similar to the labeling of $\mathcal{L}_o$ defined in Section \ref{geradores},
 so that $[\Theta](e_0) = [\Theta]$. Then there exists a faithful representation $\rho_\Theta: \Gamma \to \GG$ such that
for every leaf $e \in \mathcal{L}$ and every $\gamma\in \Gamma$ we have:
$$[\Theta](\gamma e)=\rho_\Theta(\gamma)([\Theta](e))$$
\end{theorem}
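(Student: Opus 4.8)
The plan is to run the proof of Theorem \ref{principalSch} essentially verbatim, replacing $\MG$ by $\Gamma$, the Farey lamination $\mathcal{L}_o$ by $\mathcal{L}$, and the rotation $R$ by the order-$3$ rotation $R_*$ preserving the hyperideal triangle $\Delta$. None of the ingredients used in the proof of Theorem \ref{principalSch} is specific to the cusped situation: Lemma \ref{lemaprinc} produces $\mathcal{A}_\Theta^0 \in \HH$ and the polarity $\mathcal{D}_\Theta^0 \in \GG\setminus\HH$ purely from the convex marked box $[\Theta]$; the relations $(\mathcal{A}_\Theta^0)^3 = (\mathcal{D}_\Theta^0)^2 = \mathrm{id}$ in $\GG$ hold as before; and the $*$-action together with the labeling of Section \ref{geradores} uses only the combinatorics of the lamination, which by construction is identical for $\mathcal{L}$ and for $\mathcal{L}_o$. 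The nesting properties of Remarks \ref{rk:nesting} and \ref{oposto} are statements in $\PP(V)$ and $\PP(V^*)$, hence are untouched by opening the cusps.

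First I would record that $\Gamma$, being convex cocompact, isomorphic to $\MG \cong \mathbb{Z}/2\mathbb{Z} \ast \mathbb{Z}/3\mathbb{Z}$, and generated by the involution $I$ and the order-$3$ rotation $R_*$, admits the presentation $\langle\, I, R_* \mid I^2 = 1,\ R_*^3 = 1\,\rangle$ (a generating pair consisting of an involution and an order-$3$ element in $\mathbb{Z}/2\mathbb{Z}\ast\mathbb{Z}/3\mathbb{Z}$ yields the standard presentation, since the resulting surjection from $\mathbb{Z}/2\mathbb{Z}\ast\mathbb{Z}/3\mathbb{Z}$ onto a Hopfian group is an isomorphism). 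Hence, exactly as in the proof of Theorem \ref{principalSch}, there is a well-defined representation $\rho_\Theta : \Gamma \to \GG$ with $\rho_\Theta(R_*) = \mathcal{A}_\Theta^0$ and $\rho_\Theta(I) = \mathcal{D}_\Theta^0$. Next I would verify the equivariance on the base leaf $e_0 = [\infty, 0]$: this is precisely where the choice of $\Delta$ is used. Since the projection onto $e_0$ of the center of $R_*$ is the fixed point of $I$, the isometry $I$ is still the half-turn computing $I * e_0 = \bar e_0$, and $R_*$ cyclically permutes the three sides of $\Delta$ in the same way $\langle R\rangle$ permutes them combinatorially; thus $R_*(e_0) = R * e_0$ and $I(e_0) = I * e_0$ as leaves of $\mathcal{L}$ (this is the content of the adjustment made in Section \ref{sub:cocompact}). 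Combined with the definition of the labeling, with $\Xi(R) = \varrho_1$, $\Xi(I) = i$, and with Lemma \ref{lemaprinc}, this gives $[\Theta](R_* e_0) = \mathcal{A}_\Theta^0([\Theta](e_0))$ and $[\Theta](I e_0) = \mathcal{D}_\Theta^0([\Theta](e_0))$, exactly as in Theorem \ref{principalSch}.

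From there the extension to an arbitrary leaf $e = \gamma * e_0$ of $\mathcal{L}$ is the same chain of equalities as in the proof of Theorem \ref{principalSch}: the geometric $\Gamma$-action and the combinatorial $*$-action of $\MG$ on $\mathcal{L}$ are both simply transitive and commute (so in particular the labeling is well defined and every leaf is $\gamma * e_0$ for a unique $\gamma$), the labeling satisfies $[\Theta](\gamma * e) = \Xi(\gamma)[\Theta](e)$, and the actions of $\mathfrak{G}$ and $\GG$ on $\CM$ commute; this yields $\rho_\Theta$-equivariance for $\gamma \in \{I, R_*\}$ and every leaf, and an induction on the word length of $\gamma$ in the letters $I$, $R_*$ (as at the end of the proof of Theorem \ref{principalSch}) finishes the equivariance for all $\gamma \in \Gamma$. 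Faithfulness follows as before: if $\rho_\Theta(\gamma) = \mathrm{id}$, then equivariance forces $[\Theta](\gamma e) = [\Theta](e)$ for every leaf $e$, and since $\mathfrak{G}$ acts freely on the $\mathfrak{G}$-orbit of a convex marked box (by the nesting $\interior{[\tau_k\Theta]}\subsetneq\interior{[\Theta]}$ and $\interior{i[\Theta]}\cap\interior{[\Theta]}=\emptyset$, via the normal-form argument of Lemma \ref{lemma:grouppresentation}), simple transitivity of $*$ forces $\gamma = \mathrm{id}$.

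I do not expect a real obstacle here. The only point that is not purely formal is the compatibility $R_*(e_0) = R * e_0$ and $I(e_0) = I * e_0$ between the geometric generators of $\Gamma$ and the combinatorial generators on the base leaf — and this is exactly what the construction of the hyperideal triangle $\Delta$ in Section \ref{sub:cocompact} was designed to guarantee. Everything downstream is bookkeeping identical to the proof of Theorem \ref{principalSch}, so the statement is genuinely a consequence of the preceding discussion.
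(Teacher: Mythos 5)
Your proposal is correct and takes essentially the paper's route: the paper obtains Theorem \ref{principalSch2} by noting that the combinatorics of $\mathcal{L}$ is identical to that of $\mathcal{L}_o$, so the modified representation is just the Schwartz representation composed with the obvious isomorphism $\Gamma\cong\MG$ sending $I\mapsto I$, $R\mapsto R_*$ --- which is exactly what your verbatim re-run of the proof of Theorem \ref{principalSch} (with $\rho_\Theta(R_*)=\mathcal{A}^0_\Theta$, $\rho_\Theta(I)=\mathcal{D}^0_\Theta$) amounts to. Your explicit verification of the base-leaf compatibility $I(e_0)=I*e_0$, $R_*(e_0)=R_* * e_0$ and your faithfulness argument via freeness of the $\mathfrak{G}$-action on the orbit of a convex box merely spell out details the paper leaves implicit.
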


This modified representation is the one obtained by the original Schwartz representation composed with the obvious isomorphism between $\Gamma$ and $\MG$, and therefore the original and the modified representations are essentially the same. The main difference is that now the $\rho_\Theta$-equivariant map, called the \emph{modified Schwartz map},
$$
\Phi := (\varphi_o,\varphi^{\ast}_o): \Lambda_\Gamma \rightarrow \FF\subset \PP(V)\times \PP(V^*)
$$
obtained by composing the original Schwartz map with the collapsing map $\Lambda_\Gamma \to \partial\mathbb{H}^2$ is not injective: It has the same value on the two extremities of each connected component of $\partial \mathbb{H}^2 \setminus \Lambda_\Gamma$.

\section{Anosov representations}
\label{sec:anosov}

The theory of Anosov representations was introduced by Labourie \cite{LABO} in order to study representations of closed surface groups, and later it was studied by Guichard and Wienhard \cite{GUICHA} for finitely generated Gromov-hyperbolic groups. The definition of Anosov representation involves a pair of equivariant maps from the Gromov boundary of the group into certain compact homogeneous spaces (cf. Barbot \cite{BARBOT}).

The short presentation provided here might appear sophisticated to the uninitiated reader, and the recent alternative definition developed in Bochi--Potrie--Sambarino \cite{Bochi} is more intuitive. However, the definition we select here is more adapted to our proof of Theorem \ref{thm:main}. We try to simplify the definition as much as possible. For example, the ``opening the cusp'' procedure in Section \ref{sub:cocompact} is not really necessary, but has the advantage to realize $\MG$ as a convex cocompact Fuchsian group, so that its Gromov boundary may be identified with the limit set, and to simplify somewhat the definition of Anosov representation.
Moreover, we supply the reader's intuition by stating that the Anosov property of a representation $\rho: \Gamma \to \PGL(3, \mathbb{R})$ means in particular that for every element $\gamma$ of infinite order in $\Gamma$, the image $\rho(\gamma)$ is a loxodromic element, \ie an element of $\PGL(3, \mathbb{R})$ with three real eigenvalues $|\lambda_1| < |\lambda_2| < |\lambda_3|$,
and the ``bigger'' is $\gamma$ in $\Gamma$, the bigger are the ratios $|\lambda_3|/|\lambda_2|$ and $|\lambda_2|/|\lambda_1|$.

\subsection{Definition and properties of Anosov representations}\label{sub:defanosov}

Recall that $V$ is the $3$-dimensional real vector space, and $\PP(V)$ is the real projective plane. Given $x \in \PP(V)$, let $Q_x(V)$ be the space of norms on the tangent space $T_x\PP(V)$ at $x$. Similarly, given $X \in \PP(V^*) $, let  $Q_X(V^*)$ be the space of norms on the tangent space $T_X\PP(V^*) $ at $X$.
Here, a norm is Finsler not necessarily Riemannian. We denote by $Q(V)$ the bundle of base $\PP(V)$ with fiber $Q_x(V)$ over $x\in \PP(V)$, and by $Q(V^*)$ the bundle of base $\PP(V^*) $ with fiber $Q_X(V^*)$ over $X\in \PP(V^*) $.

For each convex cocompact subgroup $\Gamma$ of $\mathrm{PSL}(2,\mathbb{R})$, we denote by $\Lambda_\Gamma$ the limit set of $\Gamma$ and by $\Omega(\phi^t)$ the nonwandering set of the geodesic flow $\phi^t$ on the unit tangent bundle $T^1(\Gamma\backslash\mathbb{H}^2)$ of $\Gamma\backslash\mathbb{H}^2$: It is the projection of the union in $T^1(\mathbb{H}^2)$
of the orbits of the geodesic flow corresponding to geodesics with tail and head in $\Lambda_\Gamma$.

\begin{definition}\label{def:anosov}
Let $\Gamma$ be a convex cocompact subgroup of $\operatorname{PSL}(2,\mathbb{R})$. A homomorphism $\rho: \Gamma \to \PGL(V)$ is a $(\PGL(V), \PP(V))$-\emph{Anosov representation} if there are

\begin{enumerate}

\item[$(i)$] a $\Gamma$-equivariant map
$
\Phi=(\varphi,\varphi^{\ast}): \Lambda_{\Gamma} \to \FF \subset \PP(V) \times \PP(V^*)
$, and
\item[$(ii)$] two maps $\nu_+:\Omega(\phi^t)\subset T^1(\Gamma\backslash\mathbb{H}^2) \to Q(V)$ and $\nu_-:\Omega(\phi^t)\subset T^1(\Gamma\backslash\mathbb{H}^2) \to Q(V^*)$ such that for every $\Gamma$-nonwandering oriented geodesic $c:\mathbb{R}\to \mathbb{H}^2$ joining two points $c_-,c_+\in \Lambda_{\Gamma}$,  the following exponential increasing/decreasing property holds:
\begin{itemize}
\item for every $v\in T_{\varphi(c_+)}\PP(V)$, the size of $v$ for the norm $\nu_+(c(t),c'(t))$ increases exponentially with $t$,
\item for every $v\in T_{\varphi^*(c_-)}\PP(V^*)$, the size of $v$ for the norm $\nu_-(c(t),c'(t))$ decreases exponentially with $t$.
\end{itemize}

\end{enumerate}
\end{definition}

\begin{remark}\label{rk:doubling}
Technically, the norms $\nu_\pm$ in the item $(ii)$ do not need to depend continuously on $(x,v) \in \Omega(\phi^t)$. The continuity, in fact, follows from the exponential increasing/decreasing property. It might be difficult to directly check this property, but there is a simpler criterion: It suffices to prove that there exists a time $T>0$ such that at every time $t$:
  \begin{eqnarray*}
    \nu_+(c(t+T),c'(t+T)) & > & 2 \, \nu_+(c(t),c'(t))\\
    \nu_-(c(t+T),c'(t+T)) & < & \frac{1}{2} \, \nu_-(c(t),c'(t))
  \end{eqnarray*}
For a proof of this folklore, see e.g. Barbot--M{\'e}rigot \cite[Proposition 5.5]{BARBOTMERIGOT}.
\end{remark}

Since the group $\Gamma$ of this definition is a Gromov-hyperbolic group realized as a convex cocompact subgroup of PSL$(2,\mathbb{R})$, its Gromov boundary $\partial \Gamma$ is $\Gamma$-equivariantly homeomorphic to its limit set $\Lambda_{\Gamma}$. The reader can find more information about Gromov-hyperbolic groups in Ghys--de la Harpe \cite{GHYS}, Gromov \cite{GROMOV} and Kapovich--Benakli \cite{KAPO}.

We denote by $\Rep(\Gamma, \PGL(V))$ the space of representations of $\Gamma$ into $\PGL(V)$, and by $\Rep_{A}(\Gamma, \PGL(V))$ the space of Anosov representations in $\Rep(\Gamma, \PGL(V))$.
Here are some basic properties of Anosov representations (see e.g. Barbot \cite{BARBOT}, Guichard--Wienhard \cite{GUICHA} or Labourie \cite{LABO}).

\begin{enumerate}

\item $\Rep_{A}(\Gamma, \PGL(V))$ is an open set in $\Rep(\Gamma, \PGL(V))$.

\item Every Anosov representation is discrete and faithful.

\item The maps $\varphi$ and $\varphi^*$ are injective.

\item For every element of infinite order in $\Gamma$, the image $\rho(\gamma)$ is diagonalizable over $\mathbb{R}$ with eigenvalues that have pairwise distinct moduli.

\item If an Anosov representation is irreducible (\ie it does not preserve a non-trivial linear subspace of $V$), then $\varphi$ (resp. $\varphi^*$) is the unique $\Gamma$-equivariant map from $\partial\Gamma$ into $\PP(V)$ (resp. $\PP(V^*) $).

\end{enumerate}


\subsection{Schwartz representations are not Anosov}

Let $ \rho_\Theta: \Gamma \to \GG $ be the (modified) Schwartz representation associated to a convex marked box $[\Theta]$.
Equip $V$ with the $\Theta$-basis of $V$ and $V^*$ with its dual basis. The projective transformation
 $$ \rho_{\Theta}(T_1^2) =  \mathcal{D}_\Theta^0 \mathcal{A}_{\Theta}^0 \mathcal{D}_\Theta^0 \mathcal{A}_{\Theta}^0$$
corresponds to the matrix:
$$ P_{\Theta} :=   D_\Theta^{-1} \; ^t(A_\Theta)^{-1} D_\Theta \, A_\Theta
$$
where $A_{\Theta}$ and $D_{\Theta}$ are computed in Lemma \ref{lemaprinc}. Then:
\begin{equation*}
Q_\Theta P_{\Theta} Q_{\Theta}^{-1}
= \left(\begin{array}{ccc}
  -1  & 1  & 0 \\
  0   & -1 & 0 \\
  0   & 0  & 1
\end{array}\right)
\quad \textrm{with} \quad
Q_\Theta = \left(\begin{array}{ccc}
  -\zeta_b & 0  & 1 \\
  0        & 2  & 0 \\
  1        & 0  & -\zeta_b
\end{array}\right)
\end{equation*}
As a consequence, the representation $\rho_\Theta$ is not Anosov because it admits non-loxodromic elements and therefore violates the item $(4)$ in Section \ref{sub:defanosov}.

\begin{remark}
There is another fact making clear that $\rho_\Theta$ is not Anosov: if $[\Theta]$ is not special, then
$\rho_\Theta$ is irreducible and the $\Gamma$-equivariant map from $\partial\Gamma$ into $\FF$ must be the map $\Phi$. However, this map is not injective, whereas according to the item (5) in Section \ref{sub:defanosov}, it should be
if $\rho_\Theta$ is Anosov.
\end{remark}

\begin{remark}
One of the referees pointed out that Schwartz representations might be an example of ``relatively'' Anosov representations as currently developed by M. Kapovich (possibly in collaboration with others).
\end{remark}

\section{A new family of representations of $\MG_o$}
\label{sec:new}

In order to show that Schwartz representations are limits of Anosov representations, we build paths (families) of Anosov representations that end in Schwartz representations. With this goal in mind, we first introduce a new group of transformations of marked boxes and consequently we obtain an analog of Theorem \ref{principalSch2} (Schwartz representation Theorem).

\subsection{A new group of transformations of marked boxes}\label{subsec:new}

For each pair $(\varepsilon, \delta)$ of real numbers, we set:
$$\Sigma_{(\varepsilon, \delta)} =
\left( \begin{array}{ccc}
1 & 0 &  0 \\
0 & e^{-\delta} \cosh(\varepsilon)  &  -\sinh(\varepsilon) \\
0 & -\sinh(\varepsilon)                      & e^{\delta} \cosh(\varepsilon)\\
\end{array} \right)
$$
Given an overmarked box $$\Theta = ((p, q, r, s; t, b),(P, Q, R, S; T, B)),$$
choose a $\Theta$-basis of $V$ and define $\sigma_{(\varepsilon, \delta)}(\Theta)$
as the image of $\Theta$ under the projective transformation given in this basis by $\Sigma_{(\varepsilon, \delta)}$. It gives us a new transformation of overmarked boxes $\sigma_{(\varepsilon, \delta)}:\CSM \to \CSM$ (see Figure \ref{fig:new_permutation}).

\begin{figure}[ht!]
\centering
\includegraphics[scale=0.3]{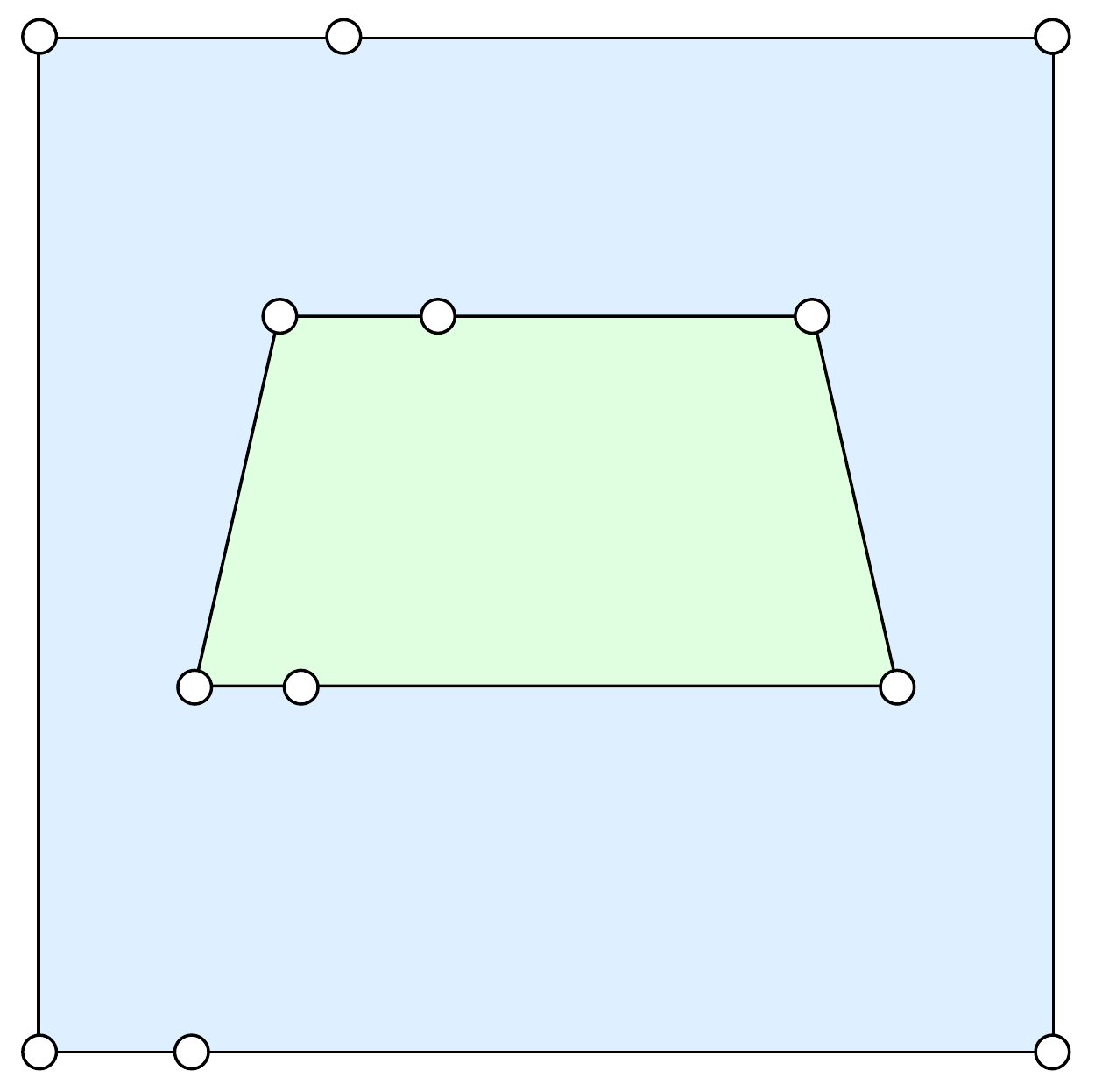}
\footnotesize
\put (-158, 110){$[-1 : 1 : 0] = p$}
\put (-6, 110){$q = [1 : 1 : 0]$}
\put (-6, -5){$r = [1 : 0 : 1]$}
\put (-158, -5){$[-1 : 0 :1] = s$}
\put (-76, 109){$t = [\zeta_t : 1 : 0]$}
\put (-92, -6){$b = [\zeta_b : 0 : 1]$}
\put (-116, 78){$\Sigma_{(\varepsilon, \delta)}(p)$}
\put (-26, 78){$\Sigma_{(\varepsilon, \delta)}(q)$}
\put (-18, 35){$\Sigma_{(\varepsilon, \delta)}(r)$}
\put (-125, 35){$\Sigma_{(\varepsilon, \delta)}(s)$}
\put (-69, 80){$\Sigma_{(\varepsilon, \delta)}(t)$}
\put (-82, 31){$\Sigma_{(\varepsilon, \delta)}(b)$}
\put (-57, 15){$\Theta$}
\put (-72, 57){$\sigma_{(\varepsilon, \delta)}(\Theta)$}
\caption{New permutation $\sigma_{(\varepsilon, \delta)}$ and a convex interior of $\sigma_{(\varepsilon, \delta)}(\Theta)$ in $\PP(V)$ is drawn in green when $\Theta$ is convex.}
\label{fig:new_permutation}
\end{figure}

\begin{lemma}\label{lemma:new_commute}
The transformation $\sigma_{(\varepsilon, \delta)}$ commutes with $j$ and therefore it acts on $\CM$.
\end{lemma}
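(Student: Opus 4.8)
The plan is to reduce the statement that $\sigma_{(\varepsilon,\delta)}$ commutes with $j$ to a computation about how the diagonal-ish matrix $\Sigma_{(\varepsilon,\delta)}$ interacts with the change of $\Theta$-basis induced by $j$. Recall that $j$ sends $\Theta = ((p,q,r,s;t,b),\dots)$ to $((q,p,s,r;t,b),\dots)$, so a $j(\Theta)$-basis is obtained from a $\Theta$-basis by swapping $p \leftrightarrow q$ and $s \leftrightarrow r$; in the normalized projective coordinates $p = [-1:1:0]$, $q = [1:1:0]$, $r = [1:0:1]$, $s = [-1:0:1]$, this swap is realized by the linear involution $J = \mathrm{diag}(-1,1,1)$ (it negates the first coordinate, which interchanges $p$ with $q$ and $s$ with $r$ while fixing $t = [\zeta_t:1:0]$ and $b = [\zeta_b:0:1]$ up to sign of the first entry — but note $t,b$ get sent to the corresponding points of $j(\Theta)$ automatically, so no further adjustment is needed).

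First I would make precise that, by definition, $\sigma_{(\varepsilon,\delta)}(\Theta)$ is the image of $\Theta$ under $\mathbb{P}(\Sigma_{(\varepsilon,\delta)})$ where $\Sigma_{(\varepsilon,\delta)}$ is written in a chosen $\Theta$-basis, and that this is well-defined (independent of the choice of $\Theta$-basis) because any two $\Theta$-bases differ by a scalar, which commutes with everything in $\PGL$. Then $j(\sigma_{(\varepsilon,\delta)}(\Theta))$ is the image of $\sigma_{(\varepsilon,\delta)}(\Theta)$ under $j$, while $\sigma_{(\varepsilon,\delta)}(j(\Theta))$ is the image of $j(\Theta)$ under $\mathbb{P}(\Sigma_{(\varepsilon,\delta)})$ expressed in a $j(\Theta)$-basis. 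Working in a fixed $\Theta$-basis throughout, the $j(\Theta)$-basis is $J$ applied to the $\Theta$-basis, so the matrix of $\sigma_{(\varepsilon,\delta)}$ acting from $j(\Theta)$-coordinates is $J^{-1}\Sigma_{(\varepsilon,\delta)}J$. The claim $j\circ\sigma_{(\varepsilon,\delta)} = \sigma_{(\varepsilon,\delta)}\circ j$ then reduces to the identity $J\,\Sigma_{(\varepsilon,\delta)} = \Sigma_{(\varepsilon,\delta)}\,J$ in $\PGL(3,\mathbb{R})$, i.e. $J$ and $\Sigma_{(\varepsilon,\delta)}$ commute. Since $J = \mathrm{diag}(-1,1,1)$ and $\Sigma_{(\varepsilon,\delta)}$ is block-diagonal with a $1\times 1$ block (equal to $1$) in the first coordinate and a $2\times 2$ block in the remaining coordinates, they manifestly commute.

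The one genuinely delicate point — and the step I expect to require the most care — is bookkeeping the action of $j$ and $\sigma_{(\varepsilon,\delta)}$ at the level of the full overmarked box data (the $12$ labeled points and lines), not merely at the level of the underlying projective transformation. One must check that the relabeling built into $j$ (the permutation $p\leftrightarrow q$, $r\leftrightarrow s$ and the dual permutation $P\leftrightarrow Q$, $R\leftrightarrow S$) is exactly the relabeling induced by conjugating by $J$, so that applying $\sigma_{(\varepsilon,\delta)}$ in a $j(\Theta)$-basis produces the $j$-image of applying $\sigma_{(\varepsilon,\delta)}$ in a $\Theta$-basis, with matching labels. Concretely: if $\mathbb{P}(\Sigma_{(\varepsilon,\delta)})$ sends the point $p$ of $\Theta$ to some point $p'$, then since $J$ interchanges the roles of the first and second normalized points, the same matrix viewed through the $j(\Theta)$-basis sends the first point of $j(\Theta)$ (which is $q$) to $p'$ as well, and $p'$ is indeed the second entry of $\sigma_{(\varepsilon,\delta)}(\Theta)$ after $j$ — consistent with the label swap. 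Once this compatibility is verified coordinate by coordinate (and dually for the lines), the commutation $j\circ\sigma_{(\varepsilon,\delta)} = \sigma_{(\varepsilon,\delta)}\circ j$ on $\CSM$ follows, and hence $\sigma_{(\varepsilon,\delta)}$ descends to a well-defined transformation of $\CM = \CSM/j$.
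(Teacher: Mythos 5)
Your proposal is correct and follows essentially the same route as the paper: both realize the relabeling $j$ through the matrix $J=\mathrm{diag}(-1,1,1)$ relating a $\Theta$-basis to a $j(\Theta)$-basis, and reduce the commutation $j\circ\sigma_{(\varepsilon,\delta)}=\sigma_{(\varepsilon,\delta)}\circ j$ to the matrix identity $J\,\Sigma_{(\varepsilon,\delta)}\,J^{-1}=\Sigma_{(\varepsilon,\delta)}$, which holds by the block structure. One small wording slip: $J$ does not fix $t=[\zeta_t:1:0]$ and $b=[\zeta_b:0:1]$ (it sends them to $[-\zeta_t:1:0]$ and $[-\zeta_b:0:1]$), but this is immaterial because your argument uses $J$ only as the change of basis, under which $j(\Theta)$ is a $(-\zeta_t,-\zeta_b)$-overmarked box with the same underlying points $t$ and $b$.
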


\begin{proof}
The projective transformation $J$ given by the matrix
$$
\left( \begin{array}{ccc}
-1 & 0 &  0 \\
 0 & 1  & 0 \\
 0 & 0  & 1 \\
\end{array} \right)
$$
for each $\Theta$-basis of $V$
sends the points $p, q, r, s$ onto $q, p, s, r$ (in this order). Thus it induces $j$. It is obvious that $J$ is an involution and $J \Sigma_{(\varepsilon, \delta)} J^{-1} = \Sigma_{(\varepsilon, \delta)}$, and therefore
\begin{align*}
(j \circ \sigma_{(\varepsilon, \delta)} \circ j) (\Theta)
&= (j \circ \sigma_{(\varepsilon, \delta)} \circ j) ((p, q, r, s; t, b),(P, Q, R, S; T, B))\\
&= (j \circ \sigma_{(\varepsilon, \delta)}) ((q, p, s, r; t, b),(Q, P, S, R; T, B))\\
&= j ((\otop{q}, \otop{p}, \otop{s}, \otop{r}; \otop{t}, \otop{b}),(\otop{Q}, \otop{P}, \otop{S}, \otop{R}; \otop{T}, \otop{B}))\\
& = ((\otop{p}, \otop{q}, \otop{r}, \otop{s}; \otop{t}, \otop{b}),(\otop{P}, \otop{Q}, \otop{R}, \otop{S}; \otop{T}, \otop{B}))\\
& = \sigma_{(\varepsilon, \delta)} (\Theta),
\end{align*}
where $\otop{x} = (J \Sigma_{(\varepsilon, \delta)} J^{-1})(x)$ for $x \in \PP(V)$ and $\otop{X} = (J \Sigma_{(\varepsilon, \delta)} J^{-1})^*(X)$ for $X \in \PP(V^*)$.
\end{proof}

\begin{remark}
Every element $T$ of $\HH$ a projective transformation) commutes with $\sigma_{(\varepsilon, \delta)}$ because the image under $T$ of a $\Theta$-basis is a $T(\Theta)$-basis. However, \emph{$\sigma_{(\varepsilon, \delta)}$ does not commute} with elements of $\GG \setminus \HH$ (dualities) acting on $\CM$.
\end{remark}

Recall that the transformation $i$ is the involution on $\CM$ defined in Section \ref{grupoelem}.

\begin{lemma}\label{lemma:great}
The following relations hold:
$$
\sigma_{(-\varepsilon, -\delta)} = \sigma_{(\varepsilon, \delta)}^{-1} \quad \textrm{and} \quad i \sigma_{(\varepsilon, \delta)}= \sigma_{(-\varepsilon, -\delta)} i
$$
\end{lemma}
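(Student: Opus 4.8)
The plan is to verify the two relations directly from the definitions, working at the level of the matrices that define the projective transformations in a fixed $\Theta$-basis of $V$. The first relation, $\sigma_{(-\varepsilon,-\delta)} = \sigma_{(\varepsilon,\delta)}^{-1}$, I would obtain simply by checking that $\Sigma_{(-\varepsilon,-\delta)} = \Sigma_{(\varepsilon,\delta)}^{-1}$ as elements of $\GL(3,\mathbb{R})$ (equivalently, $\Sigma_{(\varepsilon,\delta)} \Sigma_{(-\varepsilon,-\delta)} = \mathrm{Id}$). Since $\sigma_{(\varepsilon,\delta)}$ is by definition the transformation of overmarked boxes induced by the projectivization of $\Sigma_{(\varepsilon,\delta)}$ relative to the chosen $\Theta$-basis, and composition of such transformations corresponds to matrix multiplication, this identity at the matrix level immediately gives the identity on $\CSM$, hence on $\CM$. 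The $2\times 2$ lower block of $\Sigma_{(\varepsilon,\delta)}$ has determinant $e^{-\delta}e^{\delta}\cosh^2\varepsilon - \sinh^2\varepsilon = \cosh^2\varepsilon - \sinh^2\varepsilon = 1$, and a direct computation of the inverse of that block recovers exactly the lower block of $\Sigma_{(-\varepsilon,-\delta)}$; the first coordinate is fixed by both, so the claim follows.

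For the second relation $i\sigma_{(\varepsilon,\delta)} = \sigma_{(-\varepsilon,-\delta)} i$, the key point is that the involution $i$ on $\CSM$ is \emph{not} induced by a single projective transformation (it swaps the roles of top and bottom and also involves the dual tuple), so I cannot argue purely by matrix conjugation as in Lemma~\ref{lemma:new_commute}. Instead I would track a $\Theta$-basis through the operation $i$: if $\Theta$ is a $(\zeta_t,\zeta_b)$-overmarked box with its standard $\Theta$-basis, then from the formula $i(\Theta) = ((s,r,p,q;b,t),(R,S,Q,P;B,T))$ and the known projective coordinates of $p,q,r,s,t,b$ (namely $p=[-1:1:0]$, $q=[1:1:0]$, $r=[1:0:1]$, $s=[-1:0:1]$, $t=[\zeta_t:1:0]$, $b=[\zeta_b:0:1]$), one computes which linear map sends the standard $\Theta$-basis to a $i(\Theta)$-basis. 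The new first four points $s,r,p,q$ must land on $[-1:1:0],[1:1:0],[1:0:1],[-1:0:1]$ respectively; inspecting the coordinates, the required change of basis is (up to scaling) the permutation-type matrix swapping the second and third coordinates, i.e. $C = \left(\begin{smallmatrix} 1 & 0 & 0 \\ 0 & 0 & 1 \\ 0 & 1 & 0 \end{smallmatrix}\right)$ — this sends $r=[1:0:1] \mapsto [1:1:0] = q$'s slot, etc., and one checks it sends $(\zeta_t,\zeta_b)$ to $(\zeta_b,\zeta_t)$ in the bottom-then-top ordering, consistent with $i$ exchanging top and bottom. Then $i(\sigma_{(\varepsilon,\delta)}(\Theta))$ is computed with respect to the $i(\sigma_{(\varepsilon,\delta)}(\Theta))$-basis, and the comparison reduces to the matrix identity $C\,\Sigma_{(\varepsilon,\delta)}\,C^{-1} = \Sigma_{(-\varepsilon,-\delta)}$, which holds because conjugating by $C$ swaps the two diagonal entries $e^{-\delta}\cosh\varepsilon \leftrightarrow e^{\delta}\cosh\varepsilon$ while leaving the off-diagonal $-\sinh\varepsilon$ entries in place — exactly the effect of $(\varepsilon,\delta)\mapsto(-\varepsilon,-\delta)$.

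\textbf{Main obstacle.} The routine matrix verification $\Sigma_{(-\varepsilon,-\delta)}=\Sigma_{(\varepsilon,\delta)}^{-1}$ is immediate; the genuine content is the second relation, and the subtlety there is bookkeeping: one must be careful that $\sigma_{(\varepsilon,\delta)}$ depends on the choice of $\Theta$-basis, and that $i(\Theta)$ comes with its \emph{own} canonical basis which differs from the original one by a concrete linear map $C$. The relation $i\sigma_{(\varepsilon,\delta)} = \sigma_{(-\varepsilon,-\delta)}i$ holds precisely because $C$ conjugates $\Sigma_{(\varepsilon,\delta)}$ to $\Sigma_{(-\varepsilon,-\delta)}$ rather than to itself; making this clean requires verifying that the map sending the standard $\Theta$-basis to the standard $i(\Theta)$-basis is indeed (projectively) $C$, which in turn uses the explicit coordinates of $t,b$ and the fact that $i$ swaps $\zeta_t$ and $\zeta_b$ while negating neither — the sign change comes only from the $C$-conjugation. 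I would therefore present the proof by first recording $C$ and the identity $C\Sigma_{(\varepsilon,\delta)}C^{-1}=\Sigma_{(-\varepsilon,-\delta)}$, then chasing through the definition of $i$ on both sides exactly as in the display in the proof of Lemma~\ref{lemma:new_commute}, and finally invoking that $\sigma$ commutes with all of $\HH$ (projective transformations) so that the order in which one applies $C$ and $\Sigma$ is immaterial up to the stated sign flip.
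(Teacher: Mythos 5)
Your verification of the first relation is correct, and your overall strategy for the second relation --- track the change of basis from a $\Theta$-basis to an $i(\Theta)$-basis and reduce everything to a conjugation identity for $\Sigma_{(\varepsilon,\delta)}$ --- is exactly the paper's strategy. But the execution contains a genuine error: the central matrix identity you invoke is false. For your matrix $C=\left(\begin{smallmatrix} 1&0&0\\ 0&0&1\\ 0&1&0\end{smallmatrix}\right)$ one has $C\,\Sigma_{(\varepsilon,\delta)}\,C^{-1}=\Sigma_{(\varepsilon,-\delta)}$, \emph{not} $\Sigma_{(-\varepsilon,-\delta)}$: conjugating the lower $2\times 2$ block by the reflection $\left(\begin{smallmatrix}0&1\\1&0\end{smallmatrix}\right)$ swaps the diagonal entries $e^{\pm\delta}\cosh\varepsilon$ but leaves the off-diagonal entries $-\sinh\varepsilon$ untouched, and since $\sinh$ is odd, leaving them untouched is the effect of $(\varepsilon,\delta)\mapsto(\varepsilon,-\delta)$, whereas $(\varepsilon,\delta)\mapsto(-\varepsilon,-\delta)$ must also flip their sign. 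As written, your argument would establish $i\sigma_{(\varepsilon,\delta)}=\sigma_{(\varepsilon,-\delta)}\,i$, which is incompatible with the lemma unless $\varepsilon=0$.

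The sign you are missing comes from the change of basis itself. An $i(\Theta)$-basis must send $(s,r,p,q)$ to the slots of $(p,q,r,s)$, i.e. $s\mapsto[-1:1:0]$, $r\mapsto[1:1:0]$, $p\mapsto[1:0:1]$, $q\mapsto[-1:0:1]$; your $C$ sends $p\mapsto[-1:0:1]$ and $q\mapsto[1:0:1]$, so it is adapted to the tuple $(s,r,q,p;\dots)$, which is neither $i(\Theta)$ nor $j\,i(\Theta)$ (the involution $j$ swaps both pairs simultaneously). The correct projective change of basis is $\left(\begin{smallmatrix} 1&0&0\\ 0&0&1\\ 0&-1&0\end{smallmatrix}\right)$ --- equivalently the paper's $K=\left(\begin{smallmatrix} -1&0&0\\ 0&0&1\\ 0&-1&0\end{smallmatrix}\right)$ up to composing with $J$, which commutes with $\Sigma_{(\varepsilon,\delta)}$ --- whose lower block is a rotation by $\pm\pi/2$ rather than a reflection; conjugation by it swaps the diagonal entries \emph{and} negates the off-diagonal ones, giving exactly $\Sigma_{(-\varepsilon,-\delta)}$. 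Relatedly, your parameter bookkeeping is off: at the level of overmarked boxes $i$ takes a $(\zeta_t,\zeta_b)$-box to a $(\zeta_b,-\zeta_t)$-box, not to a $(\zeta_b,\zeta_t)$-box, and the unavoidable sign there is precisely the one your $C$ discards. With $K$ in place of $C$ (and keeping track of the harmless $j$ that appears when chasing overmarked boxes, which acts trivially on $\CM$), your outline becomes the paper's proof.
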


\begin{proof}
The first relation easily follows from the fact that $\Sigma_{(-\varepsilon, -\delta)} = \Sigma_{(\varepsilon, \delta)}^{-1}$. A proof of the second relation is similar to the proof of Lemma \ref{lemma:new_commute}:
The projective transformation $K$ given by the matrix
$$
\left( \begin{array}{ccc}
-1 & 0 &  0 \\
 0 & 0  & 1 \\
 0 & -1  & 0 \\
\end{array} \right)
$$
for each $\Theta$-basis of $V$
sends the points $p, q, r, s$ onto $s, r, p, q$, respectively. An easy computation shows that $K \Sigma_{(\varepsilon, \delta)} K^{-1} = \Sigma_{(-\varepsilon, -\delta)}$, and therefore
\begin{align*}
(j \circ i \circ \sigma_{(\varepsilon, \delta)} \circ i) (\Theta)
&= (j \circ i \circ \sigma_{(\varepsilon, \delta)} \circ i )((p, q, r, s; t, b),(P, Q, R, S; T, B))\\
& = (j \circ i \circ \sigma_{(\varepsilon, \delta)}) ((s, r, p, q; b, t),(R, S, Q, P; B, T))\\
&= (j \circ i) ((\otop{s}, \otop{r}, \otop{p}, \otop{q}; \otop{b}, \otop{t}),(\otop{R}, \otop{S}, \otop{Q}, \otop{P}; \otop{B}, \otop{T}))\\
& = j((\otop{q}, \otop{p}, \otop{s}, \otop{r}; \otop{t}, \otop{b}),(\otop{Q}, \otop{P}, \otop{S}, \otop{R}; \otop{T}, \otop{B}))\\
& = ((\otop{p}, \otop{q}, \otop{r}, \otop{s}; \otop{t}, \otop{b}),(\otop{P}, \otop{Q}, \otop{R}, \otop{S}; \otop{T}, \otop{B}))\\
& = \sigma_{(-\varepsilon, -\delta)} (\Theta)
\end{align*}
where $\otop{x} = (K \Sigma_{(\varepsilon, \delta)} K^{-1})(x)$ for $x \in \PP(V)$ and $\otop{X} = (K \Sigma_{(\varepsilon, \delta)} K^{-1})^*(X)$ for $X \in \PP(V^*)$.
\end{proof}

Now, define the function $f(\varepsilon, \delta) = e^{-\delta} \cosh(\varepsilon) - \sinh(\varepsilon) -1$ and the region
\begin{equation}\label{eq:R}
\mathcal{R} = \{ (\varepsilon, \delta) \in \mathbb{R}^2 \mid  f(\varepsilon, \delta) \geq 0 \textrm{ and } f(\varepsilon, -\delta) \geq 0 \}
\end{equation}
of $\mathbb{R}^2$ (See Figure \ref{fig:region_R}).

\begin{figure}[ht!]
\centering
\includegraphics[scale=0.5]{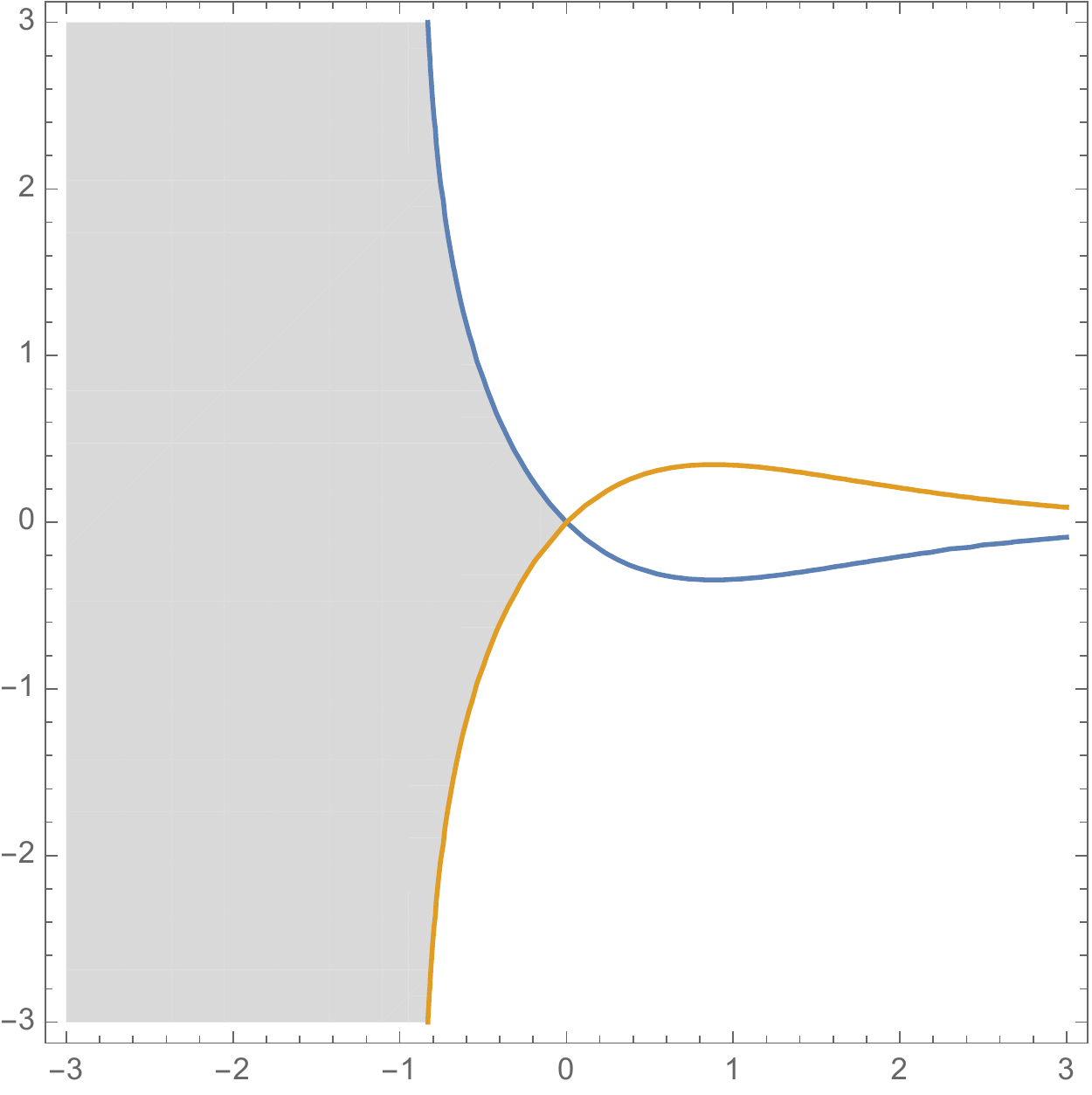}
\footnotesize
\put (-140, 90){$\mathcal{R}$}
\put (-105, 140){$f(\varepsilon,\delta)= 0$}
\put (-105, 47){$f(\varepsilon,-\delta)= 0$}
\caption{The region $\mathcal{R}$ is drawn in grey.}
\label{fig:region_R}
\end{figure}

\begin{proposition}
  For each $(\varepsilon, \delta) \in \mathbb{R}^2$, the convex interior of $\sigma_{\varepsilon, \delta}(\Theta)$ is contained in the convex interior of $\Theta$ if and only if $(\varepsilon, \delta) \in \mathcal{R}$.
\end{proposition}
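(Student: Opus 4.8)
The plan is to make the configuration completely explicit in one well-chosen affine chart and then reduce the inclusion of convex interiors to a short list of scalar inequalities. Fix a $\Theta$-basis of $V$ and pass to the affine chart $\mathcal{C} := \{\, [x:y:z] \in \PP(V) \mid y+z \neq 0 \,\}$, equipped with the coordinates $(u,w) := \bigl( \tfrac{x}{y+z} , \tfrac{y-z}{y+z} \bigr)$. A direct substitution shows that in $\mathcal{C}$ one has
\[
p = (-1,1), \quad q = (1,1), \quad r = (1,-1), \quad s = (-1,-1), \quad t = (\zeta_t, 1), \quad b = (\zeta_b, -1),
\]
while $TB = [1:0:0]$ lies on the line at infinity $\ell_\infty := \{y+z=0\}$ of $\mathcal{C}$. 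As $[\Theta]$ is convex, $\zeta_t, \zeta_b \in \,]\!-\!1,1[\,$, so $t$ and $b$ lie on the open top and bottom edges of the square $\mathcal{Q} := [-1,1]^2 \subset \mathcal{C}$; the admissible choice of the segments $[qr]$, $[sp]$ is therefore the one giving the two bounded vertical sides of $\mathcal{Q}$, and consequently $\overline{\interior{[\Theta]}} = \mathcal{Q}$. Since $\sigma_{(\varepsilon,\delta)}(\Theta)$ is the image of $\Theta$ under a projective transformation (hence lies in the same $\HH$-orbit as $\Theta$), it is again convex, and projective transformations take the convex quadrilateral of a convex marked box to that of its image; thus $\interior{[\sigma_{(\varepsilon,\delta)}(\Theta)]} = \Sigma_{(\varepsilon,\delta)}(\interior{[\Theta]})$ in these coordinates, and (passing to closures) it remains to show that $\Sigma_{(\varepsilon,\delta)}(\mathcal{Q}) \subseteq \mathcal{Q}$ if and only if $(\varepsilon,\delta) \in \mathcal{R}$.

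Since $\mathcal{Q}$ is a compact, properly convex quadrilateral inside the chart $\mathcal{C}$, this inclusion holds exactly when (i) the line $\Sigma_{(\varepsilon,\delta)}^{-1}(\ell_\infty)$ misses $\mathcal{Q}$, and (ii) the four vertices $\Sigma_{(\varepsilon,\delta)}(p), \dots, \Sigma_{(\varepsilon,\delta)}(s)$ lie in $\mathcal{Q}$: granting (i), the image $\Sigma_{(\varepsilon,\delta)}(\mathcal{Q})$ avoids $\ell_\infty$ and hence is a bounded convex quadrilateral in $\mathcal{C}$ (a projective map takes a convex domain remaining inside a chart to a convex domain), so it equals the convex hull of its four vertices and thus lies in $\mathcal{Q}$ as soon as (ii) holds; the converse is immediate. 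Now write $a := e^{-\delta}\cosh\varepsilon$ and $c := e^{\delta}\cosh\varepsilon$. Then $\Sigma_{(\varepsilon,\delta)}(p) = (-1, a, -\sinh\varepsilon)$, $\Sigma_{(\varepsilon,\delta)}(q) = (1, a, -\sinh\varepsilon)$, $\Sigma_{(\varepsilon,\delta)}(r) = (1, -\sinh\varepsilon, c)$, $\Sigma_{(\varepsilon,\delta)}(s) = (-1, -\sinh\varepsilon, c)$ in homogeneous coordinates, and $a - \sinh\varepsilon = f(\varepsilon,\delta)+1$, $c - \sinh\varepsilon = f(\varepsilon,-\delta)+1$. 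In the coordinates $(u,w)$, the vertices $\Sigma_{(\varepsilon,\delta)}(q), \Sigma_{(\varepsilon,\delta)}(p)$ have $u$-coordinate $\pm \tfrac{1}{f(\varepsilon,\delta)+1}$ and common $w$-coordinate $\tfrac{a+\sinh\varepsilon}{f(\varepsilon,\delta)+1}$, while $\Sigma_{(\varepsilon,\delta)}(r), \Sigma_{(\varepsilon,\delta)}(s)$ have $u$-coordinate $\pm \tfrac{1}{f(\varepsilon,-\delta)+1}$ and common $w$-coordinate $\tfrac{-(c+\sinh\varepsilon)}{f(\varepsilon,-\delta)+1}$; moreover $\Sigma_{(\varepsilon,\delta)}^{-1}(\ell_\infty)$ is the line $\{\, (f(\varepsilon,\delta)+1)\, y + (f(\varepsilon,-\delta)+1)\, z = 0 \,\}$, that is, the level set $\bigl\{\, w = \tfrac{f(\varepsilon,\delta) + f(\varepsilon,-\delta) + 2}{f(\varepsilon,-\delta) - f(\varepsilon,\delta)} \,\bigr\}$ of $\mathcal{C}$.

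It remains to put the inequalities together. Condition (i) says that this level-set constant has absolute value $>1$, which clears to $4\,(f(\varepsilon,\delta)+1)(f(\varepsilon,-\delta)+1) > 0$; since $f(\varepsilon,\delta)+1 = e^{-\delta}\cosh\varepsilon - \sinh\varepsilon$ and $f(\varepsilon,-\delta)+1 = e^{\delta}\cosh\varepsilon - \sinh\varepsilon$ cannot both be negative (if one is negative then $\varepsilon > 0$, whence the other is $\geq \cosh\varepsilon - \sinh\varepsilon = e^{-\varepsilon} > 0$), condition (i) amounts to $f(\varepsilon,\delta)+1 > 0$ and $f(\varepsilon,-\delta)+1 > 0$. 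Assuming this, the requirement that the $u$-coordinates of the four vertices lie in $[-1,1]$ is precisely $f(\varepsilon,\delta) \geq 0$ and $f(\varepsilon,-\delta) \geq 0$, and the requirement on the $w$-coordinates is $\sinh\varepsilon \leq 0$; but this last inequality follows from the previous two, because
\[
\min\{ f(\varepsilon,\delta), f(\varepsilon,-\delta) \} = \min\{ e^{-\delta}, e^{\delta} \}\cosh\varepsilon - \sinh\varepsilon - 1 \leq \cosh\varepsilon - \sinh\varepsilon - 1 = e^{-\varepsilon} - 1,
\]
so $f(\varepsilon, \pm\delta) \geq 0$ forces $e^{-\varepsilon} \geq 1$, i.e. $\varepsilon \leq 0$, and a fortiori $f(\varepsilon,\pm\delta)+1 > 0$. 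Hence $\Sigma_{(\varepsilon,\delta)}(\mathcal{Q}) \subseteq \mathcal{Q}$ is equivalent to $f(\varepsilon,\delta) \geq 0$ and $f(\varepsilon,-\delta) \geq 0$, that is, to $(\varepsilon,\delta) \in \mathcal{R}$. Conversely, if $(\varepsilon,\delta) \notin \mathcal{R}$, say $f(\varepsilon,\delta) < 0$, then when $f(\varepsilon,\delta)+1 \leq 0$ the line $\Sigma_{(\varepsilon,\delta)}^{-1}(\ell_\infty)$ crosses $\mathcal{Q}$ (or $\Sigma_{(\varepsilon,\delta)}(q)$ lies on $\ell_\infty$), and when $0 < f(\varepsilon,\delta)+1 < 1$ the vertex $\Sigma_{(\varepsilon,\delta)}(q)$ has $u$-coordinate exceeding $1$; in both cases $\Sigma_{(\varepsilon,\delta)}(\mathcal{Q}) \not\subseteq \mathcal{Q}$. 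The main obstacle I anticipate is the first step: pinning down that the convex quadrilateral of $[\Theta]$ is exactly the square $\mathcal{Q}$ in the chart $\mathcal{C}$, equivalently identifying, among the three admissible choices of $[qr]$ and $[sp]$, the one yielding the bounded vertical sides of $\mathcal{Q}$. Everything after that is routine, the one slightly subtle point being the observation that $f(\varepsilon,\pm\delta) \geq 0$ already forces $\varepsilon \leq 0$.
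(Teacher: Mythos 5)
Your proof is correct and takes essentially the same route as the paper's: work in the $\Theta$-basis, describe the closure of the convex interior by the inequalities $|x/(y+z)|\le 1$ and $|(y-z)/(y+z)|\le 1$, and reduce the containment to checking where $\Sigma_{(\varepsilon,\delta)}$ sends the vertices $p,q,r,s$. The paper's proof is only a two-line sketch of this computation, so your version just fills in the details it leaves implicit (the check that the image quadrilateral stays in the affine chart and remains convex, and the explicit reduction to $f(\varepsilon,\delta)\ge 0$, $f(\varepsilon,-\delta)\ge 0$).
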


\begin{proof}
A simple observation is that with respect to the $\Theta$-basis of $V$, the point $[x : y : z] \in \PP(V)$ is in the closure of the convex interior of $\Theta$ if and only if
 \begin{equation}\label{eq:region}
 y + z \neq 0,\quad  \left| \frac{x}{y+z} \right|  \leq  1 \quad \textrm{and} \quad\left| \frac{y-z}{y+z} \right|  \leq  1.
\end{equation}
Therefore, the convex interior of $\sigma_{\varepsilon, \delta}(\Theta)$ is contained in the convex interior of $\Theta$ if and only if the points $\Sigma_{(\varepsilon, \delta)}(p)$ (or $\Sigma_{(\varepsilon, \delta)}(q)$) and $\Sigma_{(\varepsilon, \delta)}(r)$ (or $\Sigma_{(\varepsilon, \delta)}(s)$) satisfy (\ref{eq:region}). The proposition then follows.
\end{proof}

\textbf{\emph{From now on, for the simplicity of the notation, let $\boldsymbol{\lambda = (\varepsilon, \delta)}$.}} For example, $\sigma_{\lambda} = \sigma_{(\varepsilon, \delta)}$. Let us introduce three more new transformations on $\CM$ as follows:
$$
i^\lambda:=\sigma_\lambda i, \quad \tau^\lambda_1 := \sigma_\lambda \tau_1, \quad  \tau^\lambda_2 := \sigma_\lambda \tau_2.
$$

\begin{lemma} \label{relacoes2}
The following relations hold:
$$
({i^\lambda})^2=1, \quad \tau^\lambda_1 i^\lambda\tau^\lambda_2 = i^\lambda, \quad \tau^\lambda_2 i^\lambda\tau^\lambda_1 = i^\lambda, \quad \tau^\lambda_1 i^\lambda\tau^\lambda_1 = \tau^\lambda_2, \quad \tau^\lambda_2 i^\lambda \tau^\lambda_2 = \tau^\lambda_1,  \quad  (i^\lambda\tau^\lambda_1)^3=1.
$$
\end{lemma}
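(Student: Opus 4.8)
The strategy is to deduce the six relations for $i^\lambda, \tau^\lambda_1, \tau^\lambda_2$ from the already-established relations for $i, \tau_1, \tau_2$ in Lemma \ref{relacoes} (equivalently, \eqref{equation:relations}), together with the two commutation-type relations for $\sigma_\lambda$ in Lemma \ref{lemma:great}, namely $\sigma_{-\lambda} = \sigma_\lambda^{-1}$ and $i\sigma_\lambda = \sigma_{-\lambda} i$, plus the fact that $\sigma_\lambda \in \HH$ commutes with $\tau_1$ and $\tau_2$ (since both $\tau_1$ and $\tau_2$ are induced by Pappus Theorem and $\sigma_\lambda$ is a projective transformation; more precisely, a projective transformation carries a $\Theta$-basis to a $\sigma_\lambda(\Theta)$-basis, so $\sigma_\lambda$ commutes with the Pappus-induced transformations exactly as in the Remark following Lemma \ref{lemma:new_commute}). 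Actually one must be slightly careful: $\sigma_\lambda$ is defined via the $\Theta$-basis, so $\sigma_\lambda$ does commute with $\tau_1$ and $\tau_2$ as maps $\CM \to \CM$. I would record this commutation as a preliminary observation and also record the consequence of Lemma \ref{lemma:great} that $i \sigma_\lambda i = \sigma_{-\lambda} = \sigma_\lambda^{-1}$.

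\textbf{Key steps.}
First, $({i^\lambda})^2 = \sigma_\lambda i \sigma_\lambda i = \sigma_\lambda (i \sigma_\lambda i) = \sigma_\lambda \sigma_\lambda^{-1} = 1$, using $i\sigma_\lambda i = \sigma_{-\lambda} = \sigma_\lambda^{-1}$ from Lemma \ref{lemma:great} and $i^2 = 1$. Second, for $\tau^\lambda_1 i^\lambda \tau^\lambda_2 = i^\lambda$: expand as $\sigma_\lambda \tau_1 \sigma_\lambda i \sigma_\lambda \tau_2$; push the middle $\sigma_\lambda$'s past $\tau_1$ (on its left, giving $\sigma_\lambda^2 \tau_1 i \sigma_\lambda \tau_2$) and past $\tau_2$, and use $i\sigma_\lambda = \sigma_\lambda^{-1} i$ to collect all the $\sigma_\lambda$ factors; one is left with $\sigma_\lambda \tau_1 i \tau_2 = \sigma_\lambda i = i^\lambda$ by the second relation in \eqref{equation:relations}. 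The relation $\tau^\lambda_2 i^\lambda \tau^\lambda_1 = i^\lambda$ is symmetric. Third, for $\tau^\lambda_1 i^\lambda \tau^\lambda_1 = \tau^\lambda_2$: expand as $\sigma_\lambda \tau_1 \sigma_\lambda i \sigma_\lambda \tau_1$; commute $\sigma_\lambda$ past the $\tau_1$'s and use $i\sigma_\lambda = \sigma_\lambda^{-1}i$ to reduce the $\sigma_\lambda$ exponents; what survives is $\sigma_\lambda \tau_1 i \tau_1 = \sigma_\lambda \tau_2 = \tau^\lambda_2$ by the fourth relation in \eqref{equation:relations}. The fifth relation $\tau^\lambda_2 i^\lambda \tau^\lambda_2 = \tau^\lambda_1$ is symmetric. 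Finally, for $(i^\lambda \tau^\lambda_1)^3 = 1$: note $i^\lambda \tau^\lambda_1 = \sigma_\lambda i \sigma_\lambda \tau_1 = \sigma_\lambda (i \sigma_\lambda) \tau_1 = \sigma_\lambda \sigma_\lambda^{-1} i \tau_1 = i\tau_1 = \varrho_1$, so $(i^\lambda \tau^\lambda_1)^3 = \varrho_1^3 = 1$ by Corollary \ref{corollary:rho}. (In fact $i^\lambda \tau^\lambda_1 = \varrho_1$ is a clean identity worth stating explicitly, since it shows the new rotation-order-three element is \emph{unchanged}.)

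\textbf{Main obstacle.}
There is no genuinely deep step; the only delicate point is bookkeeping the exponents of $\sigma_\lambda$ correctly when several copies are moved past $i$, since each passage past $i$ inverts the sign of $\lambda$ (i.e. replaces $\sigma_\lambda$ by $\sigma_\lambda^{-1}$). The cleanest way to avoid errors is to first prove the single-copy normal-form lemma $\tau_1^{a_1} i^{b_1} \tau_1^{a_2} i^{b_2} \cdots$ can be rewritten so that each $\sigma_\lambda$ is pushed all the way to the left at the cost of a sign flip per $i$ it crosses, and then apply the underlying relation of $\mathfrak{G}$ to the residual word in $i, \tau_1, \tau_2$. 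I would present each of the six computations as a short chain of equalities with the justification (``$\sigma_\lambda$ commutes with $\tau_k$'', ``$i\sigma_\lambda = \sigma_\lambda^{-1}i$'', ``relation \eqref{equation:relations}'') annotated inline, keeping the algebra explicit but brief.
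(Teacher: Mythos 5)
Your overall strategy (reduce the six relations to Lemma \ref{relacoes} via Lemma \ref{lemma:great}) is the right one, and your treatments of $(i^\lambda)^2=1$ and of $i^\lambda\tau_1^\lambda=i\tau_1=\varrho_1$ are correct. However, the preliminary claim you rely on for the four middle relations --- that $\sigma_\lambda$ commutes with $\tau_1$ and $\tau_2$ --- is false, and the justification you give misreads the remark following Lemma \ref{lemma:new_commute}. The transformation $\sigma_\lambda$ is \emph{not} an element of $\HH$: it is defined box by box, by applying the matrix $\Sigma_{(\varepsilon,\delta)}$ written in the $\Theta$-basis of each box. The remark only asserts that the action of $\HH$ on $\CM$ commutes with $\sigma_\lambda$ (a projective transformation carries a $\Theta$-basis to a $T(\Theta)$-basis); it says nothing about the elementary transformations $\tau_1,\tau_2\in\mathfrak{G}$, which are not projective transformations. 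Indeed, the very relation $i\sigma_\lambda=\sigma_\lambda^{-1}i$ of Lemma \ref{lemma:great} already shows that elementary transformations need not commute with $\sigma_\lambda$ when $\lambda\neq(0,0)$, and a direct check on the special box (so that the $\Theta$-basis is the standard basis) shows the same for $\tau_1$: writing $a=e^{-\delta}\cosh\varepsilon$, $b=-\sinh\varepsilon$, the point playing the role of $p$ in $\tau_1\sigma_\lambda\Theta$ is $[-1:a:b]$, while in $\sigma_\lambda\tau_1\Theta$ it is $[-1:a+b:b]$; these marked boxes differ (even after applying $j$) as soon as $\varepsilon\neq0$, and a similar discrepancy at the third vertex occurs when $\varepsilon=0$, $\delta\neq 0$. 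Consequently intermediate steps such as $\sigma_\lambda\tau_1\sigma_\lambda i\sigma_\lambda\tau_2=\sigma_\lambda^{2}\tau_1 i\sigma_\lambda\tau_2$ are not valid equalities, and the ``normal form'' strategy of your last paragraph, which pushes $\sigma_\lambda$ across the $\tau_k$'s, is built on the same false commutation.

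The repair is immediate, because the false commutation is never needed: the only identity required beyond \eqref{equation:relations} is $\sigma_\lambda i\sigma_\lambda=i$ (equivalent to $i\sigma_\lambda=\sigma_\lambda^{-1}i$). With it the middle block collapses without ever touching the $\tau$'s: for $a,b\in\{1,2\}$,
\begin{equation*}
\tau_a^\lambda\, i^\lambda\, \tau_b^\lambda
=\sigma_\lambda\tau_a\,(\sigma_\lambda i\sigma_\lambda)\,\tau_b
=\sigma_\lambda\,\tau_a i\tau_b ,
\end{equation*}
which equals $\sigma_\lambda i=i^\lambda$ or $\sigma_\lambda\tau_2=\tau_2^\lambda$ or $\sigma_\lambda\tau_1=\tau_1^\lambda$ according to \eqref{equation:relations}; combined with your correct arguments for $(i^\lambda)^2=1$ and $(i^\lambda\tau_1^\lambda)^3=(i\tau_1)^3=1$, this is exactly the paper's proof, which is stated in one line as ``Lemma \ref{relacoes} together with $i\sigma_\lambda=\sigma_\lambda^{-1}i$''. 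Please delete the commutation claim and rewrite the four middle relations along these lines.
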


\begin{proof}
The proof follows directly from Lemma \ref{relacoes} and the relation $i \sigma_{\lambda}= \sigma^{-1}_{\lambda} i $.
\end{proof}

Thus, by Lemma \ref{relacoes2}, the inverses of $i^\lambda$, $ \tau_1^\lambda $ and $ \tau_2^\lambda$ are
$$
(i^\lambda)^{-1}=i^\lambda, \quad (\tau_1^\lambda)^{-1}=i^{\lambda}\tau_2^{\lambda}i^{\lambda}, \quad (\tau_2^\lambda)^{-1}=i^\lambda\tau_1^\lambda i^\lambda.
$$

As a result, the semigroup $\mathfrak{G}^\lambda$ of $\GP(\CM)$ generated by $i^\lambda$, $\tau_1^\lambda$ and $\tau_2^\lambda$ is in fact a group. The key point is that if $\lambda \in \mathcal{R}$, then for every convex marked box $[\Theta]$, we still have $\interior{[\tau_1^\lambda(\Theta)]} \subsetneq \interior{[\Theta]}$, $\interior{[\tau_2^\lambda(\Theta)]} \subsetneq \interior{[\Theta]}$ and $\interior{[i^\lambda(\Theta)]} \cap \interior{[\Theta]} = \emptyset$ and furthermore if $\lambda \in \IR$, the interior of $\mathcal{R}$, then we have the same properties but now for the \emph{closures} of the interiors of the marked boxes. The Anosov character of new representations we build is a consequence of this stronger property.

Anyway, by the same arguments as in the case when $\lambda= (0,0)$, we can easily deduce:

\begin{lemma}
The group $\mathfrak{G}^\lambda$ has the following presentation:
$$
\langle\, i^\lambda, \tau_1^\lambda, \tau_2^\lambda \mid  (i^\lambda)^2=1,\; \tau_1^\lambda i^\lambda \tau_2^\lambda = i^\lambda,\; \tau_2^\lambda i^\lambda \tau_1^\lambda = i^\lambda,\; \tau_1^\lambda i^\lambda \tau_1^\lambda = \tau_2^\lambda,\; \tau_2^\lambda i^\lambda \tau_2^\lambda = \tau_1^\lambda \, \rangle
$$
\qed
\end{lemma}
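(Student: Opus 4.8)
The plan is to mimic exactly the argument already used for $\mathfrak{G}$ in Lemma~\ref{lemma:grouppresentation}, since the relations in Lemma~\ref{relacoes2} are \emph{formally identical} to those in Lemma~\ref{relacoes} (with $i^\lambda, \tau_1^\lambda, \tau_2^\lambda$ in place of $i, \tau_1, \tau_2$), and the only geometric input we need is the nesting property of convex interiors, which is supplied for $\lambda \in \mathcal{R}$ by the discussion immediately preceding the statement: for every convex marked box $[\Theta]$ one has $\interior{[\tau_1^\lambda(\Theta)]} \subsetneq \interior{[\Theta]}$, $\interior{[\tau_2^\lambda(\Theta)]} \subsetneq \interior{[\Theta]}$, and $\interior{[i^\lambda(\Theta)]} \cap \interior{[\Theta]} = \emptyset$. (For $\lambda \notin \mathcal{R}$ the nesting can fail, so one should either restrict to $\lambda \in \mathcal{R}$ or observe that the presentation holds anyway because $\mathfrak{G}^\lambda$ is abstractly a quotient of the group with that presentation and the latter is $\MG$, which is co-Hopfian enough for our purposes; the clean route is simply to invoke $\lambda \in \mathcal{R}$, as that is the only case used later.)

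The key steps, in order, are: (1) By Lemma~\ref{relacoes2}, the listed relations all hold in $\mathfrak{G}^\lambda$, so there is a surjection from the abstract group $G^\lambda$ with that presentation onto $\mathfrak{G}^\lambda$; it remains to show it is injective, i.e.\ that every relator is a consequence of the listed relations. (2) Take a word $W$ in $i^\lambda, \tau_1^\lambda, \tau_2^\lambda$ that is a relator but not derivable from the relations; using $\tau_1^\lambda i^\lambda \tau_1^\lambda = \tau_2^\lambda$, $\tau_2^\lambda i^\lambda \tau_2^\lambda = \tau_1^\lambda$, $\tau_1^\lambda i^\lambda \tau_2^\lambda = i^\lambda$, $\tau_2^\lambda i^\lambda \tau_1^\lambda = i^\lambda$ and $(i^\lambda)^2 = 1$, reduce $W$ to the normal form $(i^\lambda)^a w (i^\lambda)^b$ with $a,b \in \{0,1\}$ and $w$ in the semigroup generated by $\tau_1^\lambda, \tau_2^\lambda$. (3) Apply this to a convex marked box $[\Theta]$: since $\interior{i^\lambda[\Theta]} \cap \interior{[\Theta]} = \emptyset$, the word $w$ must be nontrivial; after replacing $[\Theta]$ by $i^\lambda[\Theta]$ we may take $a = 0$. (4) If $b = 0$, then $w[\Theta] = [\Theta]$, contradicting $\interior{w[\Theta]} \subsetneq \interior{[\Theta]}$ (which follows by iterating the strict inclusions for $\tau_1^\lambda, \tau_2^\lambda$); if $b = 1$, then $w i^\lambda \interior{[\Theta]} \subseteq \interior{i^\lambda[\Theta]}$, which is disjoint from $\interior{[\Theta]}$, again a contradiction. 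This exhausts the cases, so no such $W$ exists and the presentation is complete.

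Since the paper explicitly says ``by the same arguments as in the case when $\lambda = (0,0)$'', the intended proof is a one-line reference: the argument of Lemma~\ref{lemma:grouppresentation} applies verbatim once one substitutes $i^\lambda, \tau_1^\lambda, \tau_2^\lambda$ for $i, \tau_1, \tau_2$ and uses the nesting properties valid for $\lambda \in \mathcal{R}$ in place of Remark~\ref{rk:nesting}. I expect the only point requiring a moment's care — the ``main obstacle,'' such as it is — is making sure the nesting inequalities one invokes are the $\mathcal{R}$-versions (strict inclusions and disjointness of the \emph{open} convex interiors), not the stronger $\IR$-versions involving closures, and that the reduction to normal form genuinely only uses the five listed relations and not, say, the consequence $(i^\lambda \tau_1^\lambda)^3 = 1$; both are already handled in the $\lambda = 0$ proof, so no new idea is needed. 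Hence a proof that simply says ``\emph{See the proof of Lemma~\ref{lemma:grouppresentation}, replacing $i,\tau_1,\tau_2$ by $i^\lambda,\tau_1^\lambda,\tau_2^\lambda$ and Remark~\ref{rk:nesting} by the nesting properties valid for $\lambda\in\mathcal{R}$}'' is entirely adequate, and I would write exactly that, optionally spelling out steps (2)–(4) above for the reader's convenience.
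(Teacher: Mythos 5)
Your proposal is correct and coincides with the paper's own (implicit) proof: the lemma is stated with a \qed precisely because, as the paper says, ``the same arguments as in the case $\lambda=(0,0)$'' apply, i.e.\ the reduction of a relator to the normal form $(i^\lambda)^a w (i^\lambda)^b$ followed by the contradiction with the nesting/disjointness of convex interiors valid for $\lambda\in\mathcal{R}$, exactly as in your steps (2)--(4). The only blemish is your parenthetical fallback for $\lambda\notin\mathcal{R}$ (the ``quotient of $\MG$, co-Hopfian enough'' remark is not a valid argument), but you correctly discard it and restrict to $\lambda\in\mathcal{R}$, which is what the paper intends.
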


Hence if $\lambda \in \mathcal{R}$, then we have the group presentation:
$$\mathfrak{G}^\lambda = \langle\, i^\lambda, \tau^\lambda_1 \mid  (i^\lambda)^2=1,  (i^\lambda\tau^\lambda_1)^3=1 \,\rangle$$
and thus $\mathfrak{G}^\lambda$ is isomorphic to the modular group. An important corollary of Lemma \ref{lemma:great} is:
\begin{equation}\label{eq:yolo}
  i^\lambda\tau^\lambda_1 = \sigma_\lambda i \sigma_\lambda\tau_1 = i\tau_1
\end{equation}
and so we may rewrite the presentation in the following form:
$$\mathfrak{G}^\lambda = \langle\,  i^\lambda, \varrho_1 \mid (i^\lambda)^2=1,  \varrho_1^3=1 \,\rangle $$
where $\varrho_1 = i\tau_1$ is a Schwartz transformation of marked boxes defined in Corollary \ref{corollary:rho}. In other words, $\mathfrak{G}^\lambda$ is simply obtained from $\mathfrak{G}$ by replacing $i$ by $i^\lambda$, and keeping  $\varrho_1$ the same. As pointed out by a referee, the meaning of (\ref{eq:yolo}) is that the order $3$ projective transformation having the cycle $i(\Theta) \to \tau_1(\Theta) \to \tau_2(\Theta)$ does not change when all three boxes are modified in an equivariant way by $\sigma_\lambda$.

\begin{remark}
If $\lambda \not\in \mathcal{R}$, then the situation is completely different. In this case, it is not clear that $\mathfrak{G}^\lambda$ is isomorphic to $\mathbb{Z}/2\mathbb{Z} \ast \mathbb{Z}/3\mathbb{Z}$. However, it is not important, and in the sequel, when $\lambda \not\in \mathcal{R}$, by $\mathfrak{G}^\lambda$ we mean the group $\mathbb{Z}/2\mathbb{Z} \ast \mathbb{Z}/3\mathbb{Z}$ but acting on the set of marked boxes. Anyway, we are mostly interested in the case when $\lambda \in \IR$ because it corresponds to an Anosov representation.
\end{remark}

\subsection{New representations}

Given a convex marked box $[\Theta]$ and $\lambda = (\varepsilon,\delta) \in \mathbb{R}^2$, let us look at the convex cocompact subgroup $\Gamma$ of $\PSL(2,\mathbb{R})$ and the lamination $\mathcal{L}$ of $\mathbb{H}^2$ introduced in Section \ref{sub:cocompact}, and the new group $\mathfrak{G}^\lambda$ of transformations of $\CM$.

We cannot directly prove an analog of Theorem \ref{principalSch} since it is not true anymore that new transformations of marked boxes commute
with dualities. In order to avoid this inconvenience, we have to restrict the domain of new representations to the subgroup $\Gamma_o$ of $\Gamma$:
$$
\Gamma_o = \langle \, R_*,IR_* I\mid R_*^3=1, (IR_*I)^3=1 \,\rangle.
$$
This subgroup $\Gamma_o$ is isomorphic to $\mathbb{Z}/3\mathbb{Z} \ast \mathbb{Z}/3\mathbb{Z}$, it has index $2$ in $\Gamma$, and it is the image of $\MG_o$ under the isomorphism between $\MG$ and $\Gamma$. It preserves $\mathcal L$ but its action on oriented leaves of $\mathcal L$ is not transitive. However, the action of $\Gamma_o$ on \emph{non-oriented} leaves of $\mathcal L$ is simply transitive. It is also true that the $*$-action of $\Gamma_o$ on the set of non-oriented leaves of $\mathcal L$, which is the restriction of the $*$-action of $\Gamma$, is simply transtive.

In order to define our new representation of $\Gamma_o$ (\emph{not} $\Gamma$), we only need:

\begin{lemma} \label{lemaprinc2}
Let $\Theta$ be a convex overmarked box. Then

\begin{enumerate}

\item[1.] there exists a unique projective transformation $\mathcal{A}_{\Theta}^\lambda \in \HH$ such that:
$$
\Theta \xrightarrow{\mathcal{A}_{\Theta}^\lambda }  ji^\lambda\tau_1^\lambda\Theta \xrightarrow{\mathcal{A}_{\Theta}^\lambda } ( i^\lambda\tau_1^\lambda)^2\Theta \xrightarrow{\mathcal{A}_{\Theta}^\lambda } j\Theta
$$

\item[2.] there exists a unique projective transformation $\mathcal{B}_{\Theta}^\lambda  \in \HH$ such that:
$$
\Theta \xrightarrow{\mathcal{B}_{\Theta}^\lambda }  j\tau_1^\lambda i^\lambda\Theta \xrightarrow{\mathcal{B}_{\Theta}^\lambda } ( \tau_1^\lambda i^\lambda)^2\Theta \xrightarrow{\mathcal{B}_{\Theta}^\lambda } j\Theta
$$

\end{enumerate}

\end{lemma}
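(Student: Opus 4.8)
The plan is to mimic closely the proof of Lemma \ref{lemaprinc}, which is exactly the $\lambda = (0,0)$ case of the present statement (recall from \eqref{eq:yolo} that $i^\lambda \tau_1^\lambda = i\tau_1 = \varrho_1$, so part 1 is literally the assertion that $\mathcal{A}_\Theta^0$ exists, now re-proved with the three intermediate boxes modified equivariantly by $\sigma_\lambda$). First I would settle uniqueness: as recalled in the proof of Lemma \ref{lemaprinc}, there is at most one projective transformation carrying one overmarked box to another, so in each of the two items the map, if it exists, is determined already by the first arrow; the remaining arrows are then constraints to be verified, not freedoms. So the content is existence.

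For existence I would work in a $\Theta$-basis of $V$, with $V^*$ carrying the dual basis, and write everything as $3\times 3$ matrices depending on $\zeta_t, \zeta_b$ and $\lambda = (\varepsilon,\delta)$. The box $\Theta$ has the explicit coordinates recorded in Section \ref{sub:parameter}; applying $\tau_1^\lambda = \sigma_\lambda \tau_1$ and $i^\lambda = \sigma_\lambda i$ — where $\sigma_\lambda$ is the projective transformation $\Sigma_\lambda$ and $\tau_1$, $i$ act by the formulas in Section \ref{grupoelem} — one computes the coordinates of $i^\lambda\tau_1^\lambda\Theta$ and $(i^\lambda\tau_1^\lambda)^2\Theta$, i.e.\ of $\varrho_1\Theta$ and $\varrho_1^2\Theta$, which are \emph{independent of} $\lambda$ by \eqref{eq:yolo}; wait — the boxes $ji^\lambda\tau_1^\lambda\Theta$ appearing in the diagram are indeed the $\lambda$-independent $j\varrho_1\Theta$, so in fact \textbf{part 1 is identical to Lemma \ref{lemaprinc}(1) and $\mathcal{A}_\Theta^\lambda = \mathcal{A}_\Theta^0$}; I would simply point this out, citing \eqref{equation:matrixA}. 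The real work is part 2: here the intermediate boxes $j\tau_1^\lambda i^\lambda\Theta$ and $(\tau_1^\lambda i^\lambda)^2\Theta$ genuinely depend on $\lambda$ since $\tau_1^\lambda i^\lambda = \sigma_\lambda \tau_1 \sigma_\lambda i = \sigma_\lambda^2 \,\tau_1 i \,\sigma_\lambda^{-?}$ does not simplify the way $i^\lambda\tau_1^\lambda$ did. I would use Lemma \ref{lemma:great} ($i\sigma_\lambda = \sigma_{-\lambda} i$, equivalently $i\sigma_\lambda i = \sigma_\lambda^{-1}$) to rewrite $\tau_1^\lambda i^\lambda = \sigma_\lambda \tau_1 i \sigma_\lambda^{-1} \cdot \sigma_\lambda i \sigma_\lambda i \cdots$ — more usefully, to observe that $\tau_1^\lambda i^\lambda = i^\lambda (i^\lambda \tau_1^\lambda) i^\lambda \cdot i^\lambda = i^\lambda \varrho_1 i^\lambda$ is conjugate to $\varrho_1$ by $i^\lambda$, hence has order $3$, and more concretely that $i^\lambda(\tau_1^\lambda i^\lambda)i^\lambda = \varrho_1^{-1}$ up to the relations, so the cycle $\Theta \to \tau_1^\lambda i^\lambda \Theta \to (\tau_1^\lambda i^\lambda)^2\Theta$ is the image under $\sigma_\lambda i = i^\lambda$ of the cycle for $\varrho_1^{\pm1}$. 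Then $\mathcal{B}_\Theta^\lambda$ is obtained by conjugating $\mathcal{A}_\Theta^0$ (or its appropriate power/inverse) by the projective part of $i^\lambda$; one checks that the duality/polarity pieces cancel because $i^\lambda = \sigma_\lambda i$ and only the composition with $j$ is affected, leaving a genuine element of $\HH$. Concretely I would just exhibit the matrix $B_\Theta^\lambda := \Sigma_\lambda\, \bigl(\text{matrix of the cycle map for }\tau_1 i\bigr)\, \Sigma_\lambda^{-1}$ (with $\tau_1 i$'s cycle-matrix computed once, analogously to \eqref{equation:matrixA}) and verify by a direct computation that it realizes all three arrows.

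The key steps, in order: (i) record uniqueness from the no-more-than-one-projective-map principle; (ii) observe via \eqref{eq:yolo} that part 1 reduces verbatim to Lemma \ref{lemaprinc}(1), so $\mathcal{A}_\Theta^\lambda = \mathcal{A}_\Theta^0$; (iii) for part 2, use Lemma \ref{lemma:great} to express $\tau_1^\lambda i^\lambda$ as an $i^\lambda$-conjugate of $\varrho_1$ so that its order-$3$ cycle is the $\sigma_\lambda$-equivariant (and then $i$-reflected) image of the $\varrho_1$-cycle; (iv) deduce $\mathcal{B}_\Theta^\lambda = \sigma_\lambda \circ (\text{cycle transformation for } \tau_1 i) \circ \sigma_\lambda^{-1}$, check it lies in $\HH$ (no duality survives), and verify the three arrows by the explicit matrix computation in a $\Theta$-basis. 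The main obstacle is purely bookkeeping in step (iv): keeping straight which power of the order-$3$ element and which occurrences of $j$ appear in the cycle $\Theta \to j\tau_1^\lambda i^\lambda\Theta \to (\tau_1^\lambda i^\lambda)^2\Theta \to j\Theta$, since the $j$'s must pattern-match exactly as they do in Lemma \ref{lemaprinc}(1) for the argument that $\mathcal{B}_\Theta^\lambda$ is well-defined on \emph{marked} (not overmarked) boxes and lies in $\HH$; once the $\sigma_\lambda$-conjugation is set up correctly, the rest is a routine $3\times 3$ verification that I would relegate to "a straightforward computation shows."
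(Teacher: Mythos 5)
Your strategy is the paper's own: uniqueness from the at-most-one-projective-map principle, part~1 reduced verbatim to Lemma~\ref{lemaprinc}(1) via (\ref{eq:yolo}), and part~2 deduced from the $\lambda=0$ cycle map of $\varrho_1'=\tau_1 i$ by means of $\Sigma_{(\varepsilon,\delta)}$ and Lemma~\ref{lemma:great}. However, the concrete formula you put forward in step~(iv) --- and on which all of part~2 rests --- is conjugated in the wrong direction. Write $\NT$ for the projective transformation given by the matrix $\Sigma_{(\varepsilon,\delta)}$ in the $\Theta$-basis, and $\mathcal{B}_\Theta^0$ for the cycle map of $\varrho_1'$ (which lies in $\HH$, e.g.\ $\mathcal{B}_\Theta^0=\mathcal{D}_\Theta^0\mathcal{A}_\Theta^0\mathcal{D}_\Theta^0$). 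Using that box-transformations ($j$, $\sigma_\lambda$, elementary transformations) commute with the action of $\HH$, that $\NT\Theta=\sigma_\lambda\Theta$, and that $i\sigma_\lambda^{-1}=\sigma_\lambda i$ (Lemma~\ref{lemma:great}), one finds $(\NT^{-1}\mathcal{B}_\Theta^0\NT)(\Theta)=\NT^{-1}(j\sigma_\lambda\varrho_1'\Theta)=j\,\sigma_\lambda\varrho_1'\sigma_\lambda^{-1}\Theta=j\,\tau_1^\lambda i^\lambda\Theta$, so the correct map is $\mathcal{B}_\Theta^\lambda=\NT^{-1}\mathcal{B}_\Theta^0\NT$; the same computation applied to your candidate $\NT\,\mathcal{B}_\Theta^0\,\NT^{-1}$ gives $j\,\sigma_\lambda^{-1}\varrho_1'\sigma_\lambda\Theta=j\,\tau_1^{-\lambda}i^{-\lambda}\Theta$, i.e.\ your matrix realizes the cycle for the parameter $-\lambda$, not $\lambda$. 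The ``direct verification of the three arrows'' you defer to the end would therefore fail as written; with the conjugation reversed, your argument becomes exactly the paper's.

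Two smaller points. From $\tau_1^\lambda i^\lambda=i^\lambda\varrho_1 i^\lambda$ one gets $i^\lambda(\tau_1^\lambda i^\lambda)i^\lambda=\varrho_1$, not $\varrho_1^{-1}$; this sign slip is presumably what reversed your conjugation. Also, ``the projective part of $i^\lambda$'' is not a well-defined object: $i$ is a combinatorial permutation of the box data, not an element of $\GG$, so there are no duality pieces to ``cancel.'' The clean version of your conjugation idea (and the paper's first, shorter argument) avoids this entirely: apply item~1 to the box $i^\lambda\Theta$ and push the resulting cycle forward by the box-map $i^\lambda$, which commutes with $j$ and with every projective transformation; since $(i^\lambda)^2=1$, this yields $\mathcal{B}_\Theta^\lambda=\mathcal{A}^0_{i^\lambda\Theta}$ with no matrix computation at all.
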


\begin{proof}
  The first item is exactly the first item of Lemma \ref{lemaprinc} since $i^\lambda\tau_1^\lambda = i\tau_1 = \varrho_1$, hence $\mathcal{A}_{\Theta}^\lambda$ is precisely $\mathcal{A}_{\Theta}^0$.

  The second item is a corollary of the first item: apply the first item to $i^\lambda\Theta$, and use the fact that $i^\lambda$ commutes with $\mathcal{A}_{\Theta}^\lambda$. However, we give an alternative proof, which is useful for the later discussion: If we recall that $\NT$ is the projective transformation of $\PP(V)$ defined in Section \ref{subsec:new} and $\mathcal{B}_{\Theta}^0$ is the image of $IR_*I$ under $\rho_\Theta$ in Theorem \ref{principalSch2}, then the projective transformation $\mathcal{B}_{\Theta}^\lambda$ is actually $\NT^{-1}\mathcal{B}_{\Theta}^0\NT$.

  Let $\varrho'_1 = \tau_1i$ and look at the following diagram, which arises from the fact that $\NT^{-1}$ commutes with every elementary transformation of marked boxes:

   $$ \xymatrix{\relax
     \Theta \ar[r]^{\mathcal{B}_{\Theta}^0} \ar[d]_{\NT^{-1}}  & j\varrho'_1\Theta \ar[d] \ar[r]^{\mathcal{B}_{\Theta}^0} \ar[d]_{\NT^{-1}} & (\varrho'_1)^2\Theta \ar[r]^{\mathcal{B}_{\Theta}^0} \ar[d]_{\NT^{-1}} & j\Theta \ar[d]_{\NT^{-1}}\\
    \sigma_\lambda^{-1}\Theta  & j\varrho'_1\sigma_\lambda^{-1}\Theta & (\varrho'_1)^2\sigma_\lambda^{-1}\Theta & j\sigma_\lambda^{-1}\Theta
    } $$
   Therefore:
   $$ \xymatrix@!C{\relax
      \sigma_\lambda^{-1}\Theta  \ar[r]^-{\NT^{-1}\mathcal{B}_{\Theta}^0\NT}  & j\varrho'_1\sigma_\lambda^{-1}\Theta  \ar[r]^-{\NT^{-1}\mathcal{B}_{\Theta}^0\NT} & (\varrho'_1)^2\sigma_\lambda^{-1}\Theta \ar[r]^-{\NT^{-1}\mathcal{B}_{\Theta}^0\NT} & j\sigma_\lambda^{-1}\Theta
          } $$
  Since $\sigma_\lambda$ and the projective transformation $\NT^{-1}\mathcal{B}_{\Theta}^0\NT$ commute each other:
  $$ \xymatrix@!C{\relax
     \Theta  \ar[r]^-{\NT^{-1}\mathcal{B}_{\Theta}^0\NT}  & j\sigma_\lambda\varrho'_1\sigma_\lambda^{-1}\Theta  \ar[r]^-{\NT^{-1}\mathcal{B}_{\Theta}^0\NT} & \sigma_\lambda(\varrho'_1)^2\sigma_\lambda^{-1}\Theta \ar[r]^-{\NT^{-1}\mathcal{B}_{\Theta}^0\NT} & j\Theta
          } $$
  Our claim then follows because:
  \begin{eqnarray*}
    \sigma_\lambda\varrho'_1\sigma_\lambda^{-1} &=& \sigma_\lambda\tau_1i\sigma_\lambda^{-1} \\
     &=& \tau_1^\lambda(\sigma_\lambda i) \\
     &=& \tau_1^\lambda i^\lambda
  \end{eqnarray*}
\end{proof}

The next Theorem is similar to Theorem \ref{principalSch} (better to say, Theorem \ref{principalSch2}), but now the leaves of $\mathcal L$ must be understood as \emph{non-oriented} geodesics.

\begin{theorem}\label{principalSch3}
Let $[\Theta]$ be a convex marked box and let $\lambda \in \mathbb{R}^2$. Then there exists a representation
$\rho^\lambda_\Theta: \Gamma_o \to \HH \subset \GG$
such that for every (non-oriented) leaf $e$ of $\mathcal L$ and every $\gamma\in \Gamma$ we have:
$$[\Theta](\gamma e)=\rho^\lambda_\Theta(\gamma)([\Theta](e))$$
Moreover, if $\lambda \in \mathcal{R}$, then $\rho^\lambda_\Theta$ is faithful.
\end{theorem}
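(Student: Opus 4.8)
The plan is to rerun the proof of the Schwartz Representation Theorem (Theorem \ref{principalSch}, in the ``opened cusps'' version of Theorem \ref{principalSch2}), replacing $\mathfrak{G}$ by $\mathfrak{G}^\lambda$ and the labeling of Section \ref{geradores} by the obvious analogue built from $\mathfrak{G}^\lambda$, with one essential difference: the step treating the generator $I$ can no longer be carried out, because $\mathfrak{G}^\lambda$ commutes with projective transformations (elements of $\HH$) but \emph{not} with dualities. This is exactly why one must descend to the index $2$ subgroup $\Gamma_o$ and ends up with a representation into $\HH$.

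First I would define $\rho^\lambda_\Theta$ on generators. Since $\Gamma_o = \langle R_*\rangle \ast \langle IR_*I\rangle \cong \mathbb{Z}/3\mathbb{Z}\ast\mathbb{Z}/3\mathbb{Z}$ is a free product of two cyclic groups of order $3$, the universal property of free products reduces the task to sending $R_*$ and $IR_*I$ to elements of $\HH$ of order dividing $3$. I would set $\rho^\lambda_\Theta(R_*) := \mathcal{A}^\lambda_\Theta$ and $\rho^\lambda_\Theta(IR_*I) := \mathcal{B}^\lambda_\Theta$, the projective transformations provided by Lemma \ref{lemaprinc2}. Their orders are controlled by the proof of that lemma: $\mathcal{A}^\lambda_\Theta = \mathcal{A}^0_\Theta$ is the image of $R_*$ under the (modified) Schwartz representation $\rho_\Theta$, and $\mathcal{B}^\lambda_\Theta = \NT^{-1}\mathcal{B}^0_\Theta\NT$ is conjugate to the image $\mathcal{B}^0_\Theta$ of $IR_*I$ under $\rho_\Theta$; since $R_*^3 = (IR_*I)^3 = 1$ in $\Gamma$, both $\mathcal{A}^\lambda_\Theta$ and $\mathcal{B}^\lambda_\Theta$ cube to the identity. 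This produces a homomorphism $\rho^\lambda_\Theta : \Gamma_o \to \HH \subset \GG$ for every $\lambda \in \mathbb{R}^2$.

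Next I would verify the equivariance. Label the \emph{non-oriented} leaves of $\mathcal L$ (equivalently the $\Gamma_o$-oriented leaves, on which $\Gamma_o$ acts simply transitively) by $[\Theta](e_0) = [\Theta]$ and $[\Theta](\gamma\ast e) = \Xi^\lambda(\gamma)[\Theta](e)$ for $\gamma \in \Gamma_o$, where $\Xi^\lambda : \MG \to \mathfrak{G}^\lambda$ is the isomorphism analogous to $\Xi$, so that $\Xi^\lambda(R_*) = \varrho_1$ and, by (\ref{eq:yolo}), $\Xi^\lambda(IR_*I) = i^\lambda\varrho_1 i^\lambda = \tau^\lambda_1 i^\lambda$. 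As in Remark \ref{rek:2action}, the geometric action and the $\ast$-action of $\Gamma$ agree on $e_0$ for the generators $R_*$ and $I$; commuting a geometric generator past the accumulated $\ast$-prefix turns $g(w\ast e_0)$ into $(wg)\ast e_0$, so the identity $w(e_0) = w\ast e_0$ survives whenever $w$ is a palindrome in $\{I, R_*\}$, which holds for $w = R_*$ and $w = IR_*I$. Combining this with the equalities $\mathcal{A}^\lambda_\Theta[\Theta] = \varrho_1[\Theta]$ and $\mathcal{B}^\lambda_\Theta[\Theta] = \tau^\lambda_1 i^\lambda[\Theta]$ coming from Lemma \ref{lemaprinc2} yields $[\Theta](g\,e_0) = \rho^\lambda_\Theta(g)[\Theta](e_0)$ for $g \in \{R_*, IR_*I\}$. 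To pass to an arbitrary leaf $e = \beta\ast e_0$, and then by induction on word length to an arbitrary $\gamma \in \Gamma_o$, I would argue exactly as in the proof of Theorem \ref{principalSch}, using that the geometric and $\ast$-actions of $\Gamma_o$ on leaves commute and --- in place of the fact ``$\mathfrak{G}$ commutes with $\GG$'' invoked there for the $\gamma = I$ step --- the fact that every element of $\HH$ commutes with $\mathfrak{G}^\lambda$ (because $\HH$ commutes with $\mathfrak{G}$ and, by the remark following Lemma \ref{lemma:new_commute}, with $\sigma_\lambda$). I expect this equivariance step to be the main obstacle: one must keep track of the fact that the new labeling respects only non-oriented leaves and that all compatibilities now run through commutation with $\HH$ rather than with $\GG$; the very \emph{impossibility} of the ``$\gamma = I$'' step --- which would require a duality, and dualities do not commute with $\sigma_\lambda$ --- is precisely the obstruction to extending $\rho^\lambda_\Theta$ to all of $\Gamma$.

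Finally, faithfulness when $\lambda \in \mathcal{R}$. Suppose $\rho^\lambda_\Theta(\gamma) = \mathrm{id}$. Evaluating the equivariance at $e_0$ gives $[\Theta](\gamma e_0) = [\Theta]$; writing $\gamma e_0 = \delta\ast e_0$ with $\delta \in \Gamma_o$, this says $\Xi^\lambda(\delta)[\Theta] = [\Theta]$. For $\lambda \in \mathcal{R}$ the nesting relations $\interior{[\tau^\lambda_1\Theta]},\,\interior{[\tau^\lambda_2\Theta]} \subsetneq \interior{[\Theta]}$, $\interior{[\tau^\lambda_1\Theta]} \cap \interior{[\tau^\lambda_2\Theta]} = \emptyset$ and $\interior{[i^\lambda\Theta]} \cap \interior{[\Theta]} = \emptyset$ hold for every convex marked box, so the ping-pong argument used in the proof of Lemma \ref{lemma:grouppresentation} applies to $\mathfrak{G}^\lambda \cong \mathbb{Z}/2\mathbb{Z}\ast\mathbb{Z}/3\mathbb{Z}$ and shows that no non-trivial element of $\mathfrak{G}^\lambda$ fixes the convex marked box $[\Theta]$; hence $\delta = 1$, so $\gamma$ fixes the leaf $e_0$. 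Since no non-trivial element of $\Gamma_o$ fixes $e_0$ (its elliptic elements are conjugates of $R_*^{\pm 1}$ and $(IR_*I)^{\pm 1}$ and rotate about centers lying off $e_0$, while the leaves of $\mathcal L$ are non-closed in $\Gamma\backslash\mathbb{H}^2$ and hence are axes of no element), we conclude $\gamma = 1$, so $\rho^\lambda_\Theta$ is faithful.
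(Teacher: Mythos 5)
Your proposal is correct and follows essentially the same route as the paper: define $\rho^\lambda_\Theta$ on the generators $R_*$ and $IR_*I$ of $\Gamma_o\cong\mathbb{Z}/3\mathbb{Z}\ast\mathbb{Z}/3\mathbb{Z}$ via Lemma \ref{lemaprinc2}, then rerun the equivariance argument of Theorem \ref{principalSch}, observing that only commutation of $\mathfrak{G}^\lambda$ with $\HH$ (not with dualities) is needed. Your additional details --- the order-$3$ check for $\mathcal{A}^\lambda_\Theta$ and $\mathcal{B}^\lambda_\Theta$, the palindrome identity $(IR_*I)e_0=(IR_*I)\ast e_0$, and the nesting/ping-pong argument for faithfulness when $\lambda\in\mathcal{R}$ --- are correct fillings-in of steps the paper leaves implicit, not a different method.
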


\begin{proof} Define $\rho^\lambda_\Theta: \Gamma_o \to \HH \subset \GG$ by requiring:
$$
\rho_\Theta^\lambda(R_*) = \mathcal{A}_{\Theta}^\lambda \in \HH \quad \textrm{and} \quad \rho_\Theta^\lambda(IR_*I)=\mathcal{B}_{\Theta}^\lambda \in \HH
$$
where $\mathcal{A}_{\Theta}^\lambda$ and $\mathcal{B}_{\Theta}^\lambda$ are the projective transformations defined in Lemma \ref{lemaprinc2}.
Here we can apply the arguments in the proof of Theorem \ref{principalSch} since no dualities are involved - in that proof, we emphasized that the commutativity between the actions of $\mathfrak{G}$ and dualities was used only for defining the image of the involution $I$.
\end{proof}

\section{A special norm associated to marked boxes} \label{norma}
\label{sec:norm}
In this section, we will show that given a convex marked box $[\Theta]$, we can define a special norm associated to $[\Theta]$. For this purpose, we use the Hilbert metric on properly convex domains. The reader can find more information about the Hilbert metric in Marquis \cite{LUDOVIC} or Orenstein \cite{OP}.

Let $D$ be a \emph{properly convex} domain in $\PP(V)$, \ie there exists an affine chart $\mathbb{A}$ of $\PP(V)$ such that the closure $\overline{D}$ of $D$ is contained in $\mathbb{A}$ and $D$ is convex in $\mathbb{A}$ in the usual sense.  For distinct points $x,y \in D$, let $p$ and $q$ be the intersection points of the line $xy$ with the boundary $\partial D$ in such a way that $a$ and $y$ separate $x$ and $b$ on the line $xy$ (see Figure \ref{fig:hilbert_metric}).
The \emph{Hilbert metric} $d^h_D: D \times D \to [0,+\infty)$ is defined by:
$$
d^h_D(x,y)=\frac{1}{2} \log \left( [a:x:y:b] \right) \; \textrm{ for every } x \neq y \in D \quad \textrm{and} \quad d^h_D(x,x) = 0
$$
where $[a:x:y:b]$ is the cross-ratio of the four points $a, x, y, b \in \PP(V)$. More precisely, $[a:x:y:b] := \frac{|a-y|}{|a-x|} \cdot \frac{|b-x|}{|b-y|}$ for any Euclidean norm $| \cdot |$ on any affine chart $\mathbb{A}$ containing $\overline{D}$.

The Hilbert metric can be also defined by a Finsler norm on the tangent space $T_x D$ at each point $x \in D$: Let $x \in D$, $v \in T_xD$ and let $p^+$ (resp. $p^-$) be the intersection point of $\partial D$ with the half-line determined by $x$ and $v$ (resp. $-v$) (see Figure \ref{fig:hilbert_metric}).

\begin{figure}[ht!]
\centering
\includegraphics[scale=0.6]{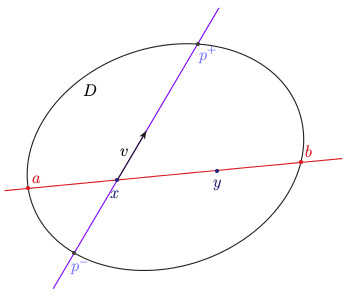}
\caption{The Hilbert metric}
\label{fig:hilbert_metric}
\end{figure}

The \emph{Hilbert norm} of $v$, denoted by $\|v\|^h_D$, is the Finsler norm defined by:
$$
\|v\|^h_D := \frac{|v|}{2} \left( \frac{1}{|x - p^-|}+ \frac{1}{|x - p^+|}\right)  = \frac{d}{dt}\bigg|_{t=0} d^h_D(x,x + tv)
$$
The following lemma demonstrates the expansion property of the Hilbert metric by inclusion:

\begin{lemma} \label{expansion}
Let $D_1$ and $D_2$ be properly convex domains in $\PP(V)$. Assume that $\overline{D_2}\subset D_1$. Then there exists a constant $C > 1$ such that

\begin{enumerate}

\item $d^h_{D_2}(x,y)$ $\geq$ $C\,d^h_{D_1}(x,y)$  for every $x,y\in D_2$,

\item $\|v\|^h_{D_2}$ $\geq$ $C\, \|v\|^h_{D_1}$ for every  $x\in D_2$ and for every $v\in T_x{D_2} = T_x{D_1}$.
\end{enumerate}
\end{lemma}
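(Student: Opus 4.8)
The plan is to reduce the metric statement (1) to the Finsler statement (2), and to prove (2) by a pointwise comparison of the two Hilbert norms, using compactness to extract a uniform constant. First I would observe that (1) follows from (2) by integration along geodesic segments: for $x,y \in D_2$, take the Euclidean segment $[x,y]$, which is contained in $D_2$ by convexity, and compute the $D_i$-lengths of this path. We have $d^h_{D_2}(x,y)$ equal to the $D_2$-length of $[x,y]$ (the Euclidean segment is a Hilbert geodesic), while $d^h_{D_1}(x,y)$ is at most the $D_1$-length of $[x,y]$; if the integrand satisfies $\|v\|^h_{D_2} \geq C\,\|v\|^h_{D_1}$ pointwise, then integrating gives $d^h_{D_2}(x,y) \geq C\,d^h_{D_1}(x,y)$. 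So everything comes down to (2).

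For (2), fix $x \in D_2$ and a unit Euclidean vector $v \in T_xD_2 = T_xD_1$ (working in a fixed affine chart $\mathbb{A}$ containing $\overline{D_1}$). Let $p^\pm_i$ be the two boundary points of $\partial D_i$ on the line through $x$ in direction $v$, for $i = 1,2$. The key geometric fact is that since $\overline{D_2} \subset D_1$, the points $p^\pm_2$ lie strictly between $x$ and $p^\pm_1$ on that line, so $|x - p^\pm_2| \leq |x - p^\pm_1|$, which immediately gives $\|v\|^h_{D_2} \geq \|v\|^h_{D_1}$ with constant $1$. To get a constant strictly bigger than $1$ that is uniform, I would use that $\overline{D_2}$ is a compact subset of the open set $D_1$, so there is a Euclidean distance $\eta > 0$ with $\operatorname{dist}(\overline{D_2}, \partial D_1) \geq \eta$; hence $|x - p^\pm_1| \geq |x - p^\pm_2| + \eta$ for all $x \in D_2$ and all directions. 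Also $\overline{D_2}$ being compact and convex in $\mathbb{A}$, its Euclidean diameter $R$ is finite, so $|x - p^\pm_2| \leq R$. Then
$$
\frac{\|v\|^h_{D_2}}{\|v\|^h_{D_1}} = \frac{\dfrac{1}{|x-p^-_2|} + \dfrac{1}{|x-p^+_2|}}{\dfrac{1}{|x-p^-_1|} + \dfrac{1}{|x-p^+_1|}} \geq \frac{\dfrac{1}{|x-p^-_2|} + \dfrac{1}{|x-p^+_2|}}{\dfrac{1}{|x-p^-_2|+\eta} + \dfrac{1}{|x-p^+_2|+\eta}},
$$
and an elementary estimate shows the right-hand side is bounded below by a constant $C = C(\eta, R) > 1$ depending only on $D_1$ and $D_2$ (for instance, using that the function $s \mapsto \frac{1/s}{1/(s+\eta)} = 1 + \eta/s \geq 1 + \eta/R$ on $0 < s \leq R$, together with the fact that a quotient of sums is at least the minimum of the term-by-term quotients).

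The main obstacle, and the only place requiring care, is making the uniform bound genuinely independent of $x$ and $v$: one must rule out the possibility that $|x - p^\pm_2| \to 0$ as $x$ approaches $\partial D_2$ in a way that degrades the ratio. But in fact small $|x - p^\pm_2|$ only helps, since $1 + \eta/s$ is decreasing in $s$; the worst case is $s = R$, giving the clean bound $C = 1 + \eta/R$ after checking the quotient-of-sums inequality. I would also note in passing that the hypothesis $\overline{D_2} \subset D_1$ (rather than merely $D_2 \subset D_1$) is exactly what guarantees $\eta > 0$, and hence $C > 1$ rather than $C = 1$; without it the lemma is false.
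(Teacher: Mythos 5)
Your argument is correct. Note that the paper does not prove this lemma itself --- it simply cites Orenstein's work (Teorema 7) --- and your proof is the standard one for this statement: the uniform pointwise comparison of Finsler norms via $\eta = \mathrm{dist}(\overline{D_2},\partial D_1)>0$ and the diameter $R$ of $\overline{D_2}$ (giving $C = 1+\eta/R$ through the mediant inequality), followed by integration along Euclidean segments, which are Hilbert geodesics, to pass from item (2) to item (1). The only facts you invoke without proof --- that the cross-ratio distance is the length metric of the Finsler norm and that straight segments realize it --- are standard in Hilbert geometry (see Marquis's survey cited in the paper), so the argument is complete as a self-contained replacement for the external reference.
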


\begin{proof}
See the proof in Orenstein \cite[Teorema 7]{OP}.
\end{proof}

\begin{definition}\label{def:distortion}
  The \emph{distortion} from $D_1$ to $D_2$, denoted by $C(D_2, D_1)$, is the upper bound of the set of $C$'s for which (1) and (2) in Lemma \ref{expansion} hold.
\end{definition}

The following lemma is obvious since projective transformations preserve the cross-ratio.

\begin{lemma}\label{le:cpreserved}
  Let $D_1$ and $D_2$ be two properly convex domains in $\PP(V)$ such that $\overline{D_2}\subset D_1$, and let $g$ be a projective
  transformation of $\PP(V)$. Then $C(gD_2, gD_1) = C(D_2, D_1)$.
  \qed
\end{lemma}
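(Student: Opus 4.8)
The plan is to deduce the statement from the fact that a projective transformation of $\PP(V)$ is an isometry for the Hilbert metric, together with the description of the distortion as a supremum of admissible constants. First I would note that $g$, being a homeomorphism of $\PP(V)$ that permutes affine charts, carries properly convex domains to properly convex domains and satisfies $g(\overline{D_2}) = \overline{gD_2} \subset gD_1$; hence $C(gD_2, gD_1)$ is well-defined, and it suffices to compare the two sets of constants appearing in Lemma \ref{expansion}.

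The key step is that $g$ preserves the Hilbert metric. For distinct $x, y$ in a properly convex $D$, the line $xy$ is sent by $g$ to the line $(gx)(gy)$, the two boundary points $a, b$ of $D$ on $xy$ are sent to the two boundary points of $gD$ on $(gx)(gy)$ with the same separation order, and the cross-ratio is unchanged, $[ga : gx : gy : gb] = [a : x : y : b]$, since the cross-ratio does not depend on the affine chart used to compute it. Therefore $d^h_{gD}(gx, gy) = d^h_D(x, y)$. Differentiating the identity $d^h_{gD}(gx, g(x+tv)) = d^h_D(x, x+tv)$ at $t = 0$ and using the characterization $\|v\|^h_D = \frac{d}{dt}\big|_{t=0} d^h_D(x, x+tv)$ gives $\|(dg)_x(v)\|^h_{gD} = \|v\|^h_D$ for every $v \in T_xD$. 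Thus $g$ is an isometry from $(D, d^h_D)$ onto $(gD, d^h_{gD})$ whose differential is a linear isometry on each tangent space.

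Finally, suppose $C$ satisfies conditions (1) and (2) of Lemma \ref{expansion} for the pair $(D_2, D_1)$. Given $x, y \in gD_2$, write $x = gx'$, $y = gy'$ with $x', y' \in D_2$; then, applying the isometry property to both $D_2$ and $D_1$,
$$ d^h_{gD_2}(x,y) = d^h_{D_2}(x',y') \;\geq\; C\, d^h_{D_1}(x',y') = C\, d^h_{gD_1}(x,y), $$
and similarly $\|v\|^h_{gD_2} \geq C\, \|v\|^h_{gD_1}$ for $v \in T_x(gD_2)$. Hence every $C$ admissible for $(D_2, D_1)$ is admissible for $(gD_2, gD_1)$; applying the same argument to $g^{-1}$ gives the reverse inclusion. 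The two sets of admissible constants coincide, so their suprema agree, i.e. $C(gD_2, gD_1) = C(D_2, D_1)$.

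I do not expect any real obstacle here: the argument is essentially bookkeeping around the projective invariance of the Hilbert structure, and the only point needing (minor) care is the chart-independence of the cross-ratio, which is precisely what makes $d^h_D$ and $\|\cdot\|^h_D$ genuine projective invariants.
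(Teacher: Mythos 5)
Your proof is correct and is exactly the paper's (one-line) argument, spelled out in detail: the paper dismisses the lemma as obvious because projective transformations preserve the cross-ratio, hence the Hilbert metrics and norms, hence the set of admissible constants in Lemma \ref{expansion} and its supremum. No issues.
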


Moreover:

\begin{lemma}\label{le:cdecrease}
  Let $D_1$, $D_2$, $D_3$ be  properly convex domains in $\PP(V)$ such that $\overline{D_2}\subset D_1$ and $\overline{D_3}\subset D_2$.
  Then $C(D_3, D_1) \geq C(D_3, D_2)\,C(D_2,D_1)$.
  \qed
\end{lemma}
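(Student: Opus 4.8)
The plan is to chain together the two expansion estimates of Lemma~\ref{expansion}, applied to the nested pairs $(D_2,D_3)$ and $(D_1,D_2)$, and then read the resulting constant off from Definition~\ref{def:distortion}.

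First I would observe that, for any nested pair $\overline{D}\subset D'$ of properly convex domains, the set of constants $C>1$ for which both inequalities of Lemma~\ref{expansion} hold is downward closed in $]1,+\infty[$: if they hold for $C$ and $1<C'<C$, then $d^h_{D}(x,y)\geq C\,d^h_{D'}(x,y)\geq C'\,d^h_{D'}(x,y)$, and likewise for the Finsler norms. Hence this set is an interval with supremum $C(D,D')$, and in particular the inequalities $d^h_{D}(x,y)\geq C\,d^h_{D'}(x,y)$ and $\|v\|^h_{D}\geq C\,\|v\|^h_{D'}$ hold for every $C$ with $1<C<C(D,D')$.

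Next comes the main step. Fix distinct points $x,y\in D_3$ and a tangent vector $v\in T_xD_3$; since $D_3\subset D_2\subset D_1$, all the relevant Hilbert distances and Hilbert norms are defined at these data. Choose any constants $C_{32}$ with $1<C_{32}<C(D_3,D_2)$ and $C_{21}$ with $1<C_{21}<C(D_2,D_1)$. Applying Lemma~\ref{expansion} to the pair $\overline{D_3}\subset D_2$ gives $d^h_{D_3}(x,y)\geq C_{32}\,d^h_{D_2}(x,y)$, and applying it to the pair $\overline{D_2}\subset D_1$ (with the points $x,y\in D_3\subset D_2$) gives $d^h_{D_2}(x,y)\geq C_{21}\,d^h_{D_1}(x,y)$; concatenating yields $d^h_{D_3}(x,y)\geq C_{32}C_{21}\,d^h_{D_1}(x,y)$, and the identical argument for the Finsler norms gives $\|v\|^h_{D_3}\geq C_{32}C_{21}\,\|v\|^h_{D_1}$. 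Since $\overline{D_3}\subset D_2\subset D_1$, the constant $C_{32}C_{21}>1$ thus belongs to the set whose supremum defines $C(D_3,D_1)$, so $C(D_3,D_1)\geq C_{32}C_{21}$; letting $C_{32}\to C(D_3,D_2)$ and $C_{21}\to C(D_2,D_1)$ yields $C(D_3,D_1)\geq C(D_3,D_2)\,C(D_2,D_1)$.

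There is essentially no serious obstacle: the statement is just transitivity of the expansion constant under nesting. The only point deserving a little care is that the supremum in Definition~\ref{def:distortion} need not be attained, which is why I work with constants strictly below the two distortions and pass to the limit at the end, rather than substituting $C(D_3,D_2)$ and $C(D_2,D_1)$ directly.
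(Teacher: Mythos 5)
Your proof is correct and is precisely the chaining argument the paper has in mind (the lemma is stated with an omitted proof as immediate): apply Lemma \ref{expansion} to the pairs $\overline{D_3}\subset D_2$ and $\overline{D_2}\subset D_1$, multiply, and pass to the supremum in Definition \ref{def:distortion}. Your care about the supremum possibly not being attained is fine, though it can be bypassed: since the inequalities in Lemma \ref{expansion} are non-strict, they persist under taking $C \to C(D,D')$, so the distortion constants themselves satisfy (1) and (2) and can be chained directly.
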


\begin{remark} For each convex marked box $[\Theta]$, the convex interior of $[\Theta]$ (resp. $\interior{[\Theta^*]}$) is a properly convex domain in $\PP(V)$ (resp. $\PP(V^*)$). Hence we can define the Hilbert metric $($norm$)$ on $\interior{[\Theta]}$ (resp. $\interior{[\Theta^*]}$).
\end{remark}

\section{A family of Anosov representations}
\label{sec:newisanosov}
In this section, we give the proof of Theorem \ref{thm:main}. Recall that we can identify $\HH$ with $\PGL(V)$ and in Theorem \ref{principalSch3} we define the representations $\rho^\lambda_\Theta: \Gamma_o \to \HH$. Since the groups $\MG_o$ and $\Gamma_o$ are isomorphic, we just have to show that the representations
$\rho^\lambda_\Theta$ are Anosov when $\lambda \in \IR$.

From now on, assume that $\lambda \in \IR$.
We only need to verify that there exist
\begin{enumerate}

\item  a $\Gamma_o$-equivariant map $\Phi^\lambda=(\varphi_\lambda,\varphi^{\ast}_\lambda): \Lambda_{\Gamma_o} \to \FF\subset \PP(V) \times \PP(V^*) $, and

\item  two maps  $\nu_+:\Omega(\phi^t)\subset T^1(\Gamma_o \backslash\mathbb{H}^2) \to Q(V)$ and $\nu_-:\Omega(\phi^t)\subset T^1(\Gamma_o \backslash \mathbb{H}^2) \to Q(V^*)$ that ``carry'' the Anosov property of expansion and contraction.

\end{enumerate}

\subsection{Combinatorics of the geodesic flow with respect to $\mathcal{L}$}\label{sub:combinatorics}

Let $\alpha \in \Lambda_{\Gamma_o}$ and let $c$ be the $\Gamma_o$-nonwandering oriented geodesic whose head is $\alpha$. Since $c$ is nonwandering, it meets infinitely many leaves of $\mathcal L$. We orient each of these leaves so that $c$ crosses each of them from the right to the left, and denote them by $\ell_m$ with $m \in \mathbb{Z}$ (see Figure \ref{fig:sequence_L}).
\begin{figure}[ht!]
\centering
\includegraphics[scale=0.7]{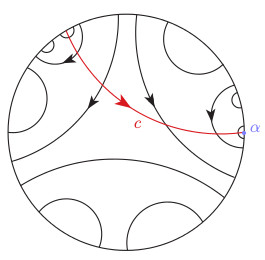}
\scriptsize
\put(-103,93){$\ell_{-1}$}
\put(-87,73){$\ell_0$}
\put(-54,76){$\ell_1$}
\caption{A sequence $(\ell_m)$ of oriented leaves of $\mathcal{L}$ crossed by $c$}
\label{fig:sequence_L}
\end{figure}

Recall the objects $e_0$ and $\Delta$ in Section \ref{sub:cocompact}.
We can assume without loss of generality that the leaf $\ell_0$ is the image of $e_0$ under some element $\gamma_0$ of $\Gamma_o$. Now, we forget all the other leaves with odd index.

\begin{lemma}\label{lemma:W}
For every integer $n$, the oriented leaf $\ell_{2n}$ is in the $\Gamma_o$-orbit of $\ell_0$. Furthermore, if $\gamma_n$ is the unique element of $\Gamma_o$ for which $\ell_{2n} = \gamma_n e_0$, then we have
$\gamma_{n+1} = \gamma_nw$, where $w$ is one of the elements of the following subset of $\Gamma_o$:
$$W = \{ R_*IR_*I, \;\;  R_*IR_*^2I, \;\; R_*^2IR_*^2I, \;\;  R_*^2IR_*I \} $$
\end{lemma}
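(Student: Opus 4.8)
The plan is to translate the statement into the combinatorics of the $*$-action on leaves of $\mathcal L$, using the fact that $\Gamma_o$ acts simply transitively on non-oriented leaves (Remark~\ref{rk:index2farey} and Section~\ref{sub:cocompact}). First I would set up the picture: the nonwandering geodesic $c$ enters the hyperideal triangle $\Delta_m$ having $\ell_m$ as a side, crosses it, and exits through one of the other two sides, which is $\ell_{m+1}$. By the $3$-fold symmetry of $\mathcal L$, the three sides of a hyperideal triangle form a single $\langle R_*\rangle$-orbit (analogue of Remark~\ref{rek:2action}: the orbit of a leaf under $\langle R_*\rangle$ is exactly the set of three sides of a hyperideal triangle, since $R_* = IT_1$-type relations still hold after opening the cusps). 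Consequently each single crossing step $\ell_m \rightsquigarrow \ell_{m+1}$ is realized by moving to an adjacent triangle, i.e.\ by applying one of $I, IR_*, IR_*^2$ (up to reorientation conventions) relative to the current leaf; the letter $I$ accounts for passing from one triangle to the one on the other side of a shared edge, and the power of $R_*$ records which of the two remaining edges $c$ exits through.

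Next I would make this precise at the level of $\Gamma_o$ rather than of leaves. Since $\ell_0 = \gamma_0 e_0$ with $\gamma_0 \in \Gamma_o$, and $\Gamma_o$ acts simply transitively on the $\Gamma_o$-orbit of $e_0$ (the ``$\MG_o$-orientation'' of Remark~\ref{rk:index2farey}, transported to $\mathcal L$), each even-index leaf $\ell_{2n}$ lies in this orbit precisely because an even number of ``$I$-steps'' (crossing two triangles) returns an $e_0$-oriented leaf to an $e_0$-oriented leaf — this is exactly the characterization of $\MG_o$ as words with an even number of $I$'s (Remark~\ref{rk:eveng}). So $\ell_{2n} = \gamma_n e_0$ for a unique $\gamma_n \in \Gamma_o$, and $\gamma_{n+1}\gamma_n^{-1}$, equivalently $w := \gamma_n^{-1}\gamma_{n+1}$, is the composition of the two elementary crossing steps $\ell_{2n}\rightsquigarrow \ell_{2n+1}\rightsquigarrow \ell_{2n+2}$. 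Each such step is one of the single-crossing moves above, so $w$ is a product of two of them; writing everything in terms of $R_*$ and $IR_*I$ (the generators of $\Gamma_o$) and using $I^2 = 1$, the admissible products reduce to a list of four words. Finally I would argue that the backtracking move — where $c$ would exit $\Delta_{2n+2}$ back through $\ell_{2n+1}$ — is excluded because $c$ is a geodesic and cannot immediately re-cross the leaf it just crossed; this rules out the combinations that would collapse $w$ to the identity or to a single generator, leaving exactly
$$W = \{\, R_*IR_*I,\ \ R_*IR_*^2I,\ \ R_*^2IR_*^2I,\ \ R_*^2IR_*I \,\}.$$

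I expect the main obstacle to be bookkeeping the orientation conventions cleanly: the leaves $\ell_m$ are oriented by the rule ``$c$ crosses from right to left'', which need not agree with the $\Gamma_o$-orientation coming from $e_0$, so one must carefully track when a crossing step also reverses orientation (an $I$ in the $*$-action sense) versus merely rotates about an endpoint (a $T_1$ or $T_2$). The cleanest route is probably to fix, once and for all, a correspondence between ``$c$ exits $\Delta$ through the left edge / the right edge'' and the two nontrivial powers of the rotation $R_*$ fixing $\Delta$, then observe that two consecutive exits compose as claimed; the geodesic condition (no immediate back-crossing) then does all the remaining pruning. Everything else — the simple transitivity, the even-$I$ characterization, the $\langle R_*\rangle$-orbit being a triangle — is already available from Section~\ref{sec.Farey} and Section~\ref{sub:cocompact} and requires no new computation.
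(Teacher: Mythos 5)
Your plan is essentially the paper's own argument: the paper likewise follows the geodesic through two consecutive hyperideal triangles, records each single crossing as ``apply a nontrivial power of $R_*$ and reverse orientation (one letter $I$)'', rules out re-crossing the side just crossed because two distinct geodesics meet at most once, and composes two such steps to obtain exactly the four words of $W$, the even number of $I$'s being what puts $\ell_{2n}$ back in the $\Gamma_o$-orbit of $e_0$ (with uniqueness of $\gamma_n$ from simple transitivity). The only imprecision is your provisional single-step alphabet $\{I,\,IR_*,\,IR_*^2\}$ --- a crossing is never just $I$, and with the paper's conventions the one-step move is right multiplication by $R_*I$ or $R_*^2I$, whose twofold products give precisely $W$ --- but your closing remark about fixing the exit/rotation correspondence and composing two consecutive exits is exactly the computation the paper carries out.
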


\begin{proof}
The image of $c$ under $\gamma_n^{-1}$ crosses $e_0$ from the right to the left, hence enters in $\Delta$. Then it exits $\Delta$ from one of the two other sides $R_*e_0$  or $R_*^2e_0$ (see Figure \ref{fig:four_leaves}). Observe that these crossings are both from the left to the right, hence
$\ell_{2n+1}$ is the image under $\gamma_n$ of $R_*e_0$ or $R_*^{-1}e_0$ \emph{with the reversed orientation}, and therefore $\ell_{2n+1}$ is not
in the orbit of $e_0$ under $\Gamma_o$.

In the first case, the case when the geodesic crosses $R_*e_0$, it enters in the triangle $R_*IR_*^2 \Delta$, and then exit, from the right to the left, through either $R_*IR_*Ie_0$ or $R_*IR_*^2Ie_0$. Thus, we obtain that $\gamma_{n+1} = \gamma_nR_*IR_*I$ or $\gamma_{n+1} = \gamma_nR_*IR_*^2I$.

In the second case, we just have to replace $R_*$ by $R_*^{2}$, and we then have $\gamma_{n+1} = \gamma_nR_*^2IR_*^2I$ or $\gamma_{n+1} = \gamma_nR_*^2IR_*I$. The result follows.
\end{proof}

\begin{figure}[ht!]
\centering
\includegraphics[scale=0.4]{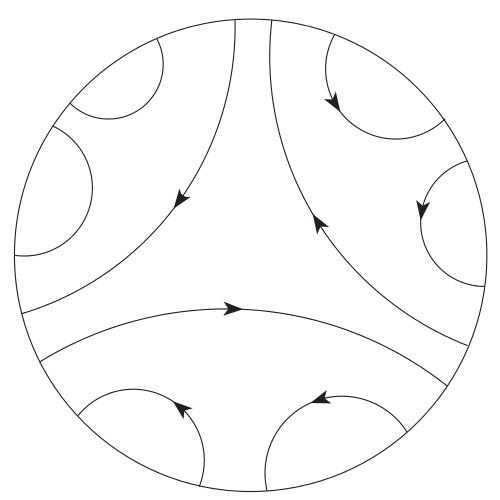}
\footnotesize
\put(-90,82){$e_0$}
\put(-145,51){$0$}
\put(-81,141){$\infty$}
\put(-61,16){$R^2_*IR_*I e_0$}
\put(-128,17){$R^2_*IR^2_*I e_0$}
\put(-21,77){$R_*IR^2_*I e_0$}
\put(-46,114){$R_*IR_*Ie_0$}
\put(-65,85){$R_*e_0$}
\put(-128,17){$R^2_*IR^2_*I e_0$}
\put(-86,61){$R^2_*e_0$}
\put(-77,77){$\Delta$}
\caption{The four leaves on the left of $e_0$ at the $2$nd step}
\label{fig:four_leaves}
\end{figure}

\begin{definition}\label{def:C}
The minimal distortion after crossing two leaves of $\mathcal{L}$ is (recall Definition \ref{def:distortion}):
\begin{equation*}\label{uniform_bound}
C := \mathrm{min}\left\{\, C( \interior{ [\Theta](we_0) }, \interior{[\Theta](e_0)}) \mid w \in W \,\right\}
\end{equation*}
\end{definition}

\subsection{The equivariant map of new representations}\label{sub:varphi}

In this section we prove:

\begin{proposition}\label{pro:constructphi}
  There exists  a $\Gamma_o$-equivariant continuous map:
  $$\Phi^\lambda=(\varphi_\lambda,\varphi^{\ast}_\lambda): \Lambda_{\Gamma_o} \to \FF\subset \PP(V) \times \PP(V^*)$$
\end{proposition}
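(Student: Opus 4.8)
The plan is to build the two components $\varphi_\lambda$ and $\varphi^\ast_\lambda$ separately, using the nesting of convex interiors in $\PP(V)$ and $\PP(V^\ast)$ respectively, and then check that the resulting pair lands in the flag variety $\FF$. Fix a point $\alpha\in\Lambda_{\Gamma_o}$, take the nonwandering oriented geodesic $c$ with head $\alpha$, and consider the sequence $(\ell_{2n})_{n\geq 0}$ of even-index leaves it crosses, as in Lemma \ref{lemma:W}; write $\ell_{2n}=\gamma_n e_0$. By Remark \ref{rk:nesting} (transported by the $\mathfrak{G}^\lambda$-action, which for $\lambda\in\IR$ strictly nests the \emph{closures} of the convex interiors), the convex interiors of the labels $[\Theta](\ell_{2n})$ form a strictly decreasing nested sequence of properly convex domains in $\PP(V)$: $\overline{\interior{[\Theta](\ell_{2n+2})}}\subset \interior{[\Theta](\ell_{2n})}$. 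Hence their intersection is a nonempty compact convex set. The first step is to show this intersection is a single point; call it $\varphi_\lambda(\alpha)$.

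To prove that the nested intersection is a point, I would use the Hilbert-metric distortion estimates of Section \ref{sec:norm}. Since $c$ is nonwandering and $W$ is finite, each two-leaf crossing multiplies Hilbert distances by at least the uniform constant $C>1$ of Definition \ref{def:C}; by Lemma \ref{le:cdecrease} the distortion from $\interior{[\Theta](e_0)}$ to $\interior{[\Theta](\ell_{2n})}$ is at least $C^n$. Equivalently, the diameter of $\interior{[\Theta](\ell_{2n})}$ measured in the fixed Hilbert metric of $\interior{[\Theta](e_0)}$ is bounded by (constant)$\cdot C^{-n}\to 0$; more precisely one compares with the Hilbert diameter of $\interior{[\Theta](\ell_{2n})}$ in its own metric, which stays bounded because only finitely many projective isomorphism types of inclusions occur (the $W$-translates), and pulls back under the contraction. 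So $\bigcap_n \interior{[\Theta](\ell_{2n})}$ is a single point. The same argument in $\PP(V^\ast)$, using Remark \ref{oposto} (where the inclusions for $\tau_1^\ast,\tau_2^\ast$ are \emph{reversed}, so one uses instead the nested sequence coming from the \emph{other} orientation, i.e. from the geodesic with $\alpha$ as \emph{tail}, equivalently from $\interior{[\Theta^\ast]}$ applied along $i^\lambda$-corrected combinatorics), produces a point $\varphi^\ast_\lambda(\alpha)\in\PP(V^\ast)$. One then checks $(\varphi_\lambda(\alpha),\varphi^\ast_\lambda(\alpha))\in\FF$ by observing that at every finite stage the top (or bottom) flag $(t,T)$ of the label $[\Theta](\ell_{2n})$ is a flag, its point lies in $\overline{\interior{[\Theta](\ell_{2n})}}$ and its line is dual-nested accordingly, and the incidence $\langle T\mid t\rangle=0$ passes to the limit by continuity of the evaluation pairing on $\PP(V)\times\PP(V^\ast)$.

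Next I would establish $\Gamma_o$-equivariance and continuity. Equivariance is formal: for $\gamma\in\Gamma_o$, the geodesic with head $\gamma\alpha$ crosses the leaves $\gamma\ell_{2n}$, whose labels are $\rho^\lambda_\Theta(\gamma)[\Theta](\ell_{2n})$ by Theorem \ref{principalSch3}, and $\rho^\lambda_\Theta(\gamma)\in\HH$ is a projective transformation sending convex interiors to convex interiors; passing to the intersection gives $\varphi_\lambda(\gamma\alpha)=\rho^\lambda_\Theta(\gamma)\varphi_\lambda(\alpha)$, and likewise for $\varphi^\ast_\lambda$. For continuity, I would argue that the combinatorial data (the first $N$ letters $w\in W$ read off from $c$) depends locally constantly on $\alpha$ away from the countably many endpoints of complementary intervals of $\Lambda_{\Gamma_o}$, so $\alpha\mapsto \interior{[\Theta](\ell_{2N})}$ is locally constant there; combined with the uniform exponential shrinking, this forces $\varphi_\lambda$ to be continuous (a uniform Cauchy estimate). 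At the countably many exceptional points one checks the two one-sided limits agree, exactly as in the classical Schwartz case (cf. the Modified Schwartz map in Section \ref{sub:cocompact}), or simply notes that $\Lambda_{\Gamma_o}=\Lambda_\Gamma$ is a Cantor set and a uniformly Cauchy family of locally constant maps on a dense set of ``cylinders'' extends uniquely to a continuous map on the whole limit set.

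The main obstacle I anticipate is the dual component $\varphi^\ast_\lambda$: because $\sigma_\lambda$ does \emph{not} commute with dualities, one cannot simply dualize the construction of $\varphi_\lambda$, and the inclusions in $\PP(V^\ast)$ run the ``wrong way'' for $\tau_i^\lambda$ (Remark \ref{oposto}). The fix is to realize that the relevant nested sequence in $\PP(V^\ast)$ comes from reading the leaves crossed by $c$ \emph{in the opposite order} (equivalently, attaching to $\alpha$ the geodesic for which $\alpha$ is the tail and using that $i^\lambda$ still strictly separates dual convex interiors), so that along that sequence the dual convex interiors $\interior{[\Theta(\ell_{2n})^\ast]}$ are again strictly decreasing; then the same Hilbert-metric contraction argument applies in $\PP(V^\ast)$. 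Once both limits exist and are shown to be a flag, equivariance and continuity are routine. I would therefore spend the bulk of the write-up on (i) the uniform exponential shrinking via Lemmas \ref{expansion}, \ref{le:cpreserved}, \ref{le:cdecrease} and the finiteness of $W$, and (ii) the bookkeeping that makes the dual sequence genuinely nested.
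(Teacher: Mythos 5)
Your construction of $\varphi_\lambda$ is the paper's: nested convex interiors $\interior{[\Theta_{2n}]}$ along the geodesic with head $\alpha$, the uniform distortion constant $C>1$ coming from the finiteness of $W$, shrinking to a point, then equivariance (formal) and continuity (from the shrinking neighborhoods); your definition of $\varphi^*_\lambda$ via the geodesic with tail $\alpha$ also matches. (Minor slip: the Hilbert diameter of $\interior{[\Theta](\ell_{2n})}$ ``in its own metric'' is infinite; what you need, and what the paper uses, is either the two-point cross-ratio contradiction or the diameter of the \emph{smaller} box in the metric of the \emph{larger} one, uniformly bounded over $w\in W$.)

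The genuine gap is the flag-compatibility step. Saying that each finite-stage flag $(b_n,B_n)$ of $[\Theta_n]$ is incident and that ``the incidence passes to the limit'' presupposes that \emph{both} components converge to the right limits. The point $b_n$ does converge to $\varphi_\lambda(\alpha)$, but you give no reason why the lines $B_n$ converge to $\varphi^*_\lambda(\alpha)$: $B_n$ lies in the closure of $\interior{[\Theta_n^*]}$, and along $c$ these dual domains are \emph{increasing} (Remark \ref{oposto}), so no shrinking forces convergence. Moreover your parenthetical ``equivalently'' is not an equivalence: reading the leaves crossed by $c$ in the opposite order produces the nested sequence $\interior{[\Theta_m^*]}$ as $m\to-\infty$, whose intersection is the dual point attached to the \emph{tail} $c_-$, i.e.\ $\varphi^*_\lambda(c_-)$, not $\varphi^*_\lambda(\alpha)$ for $\alpha=c_+$; the latter comes from the reversed geodesic $\bar c$, whose crossed leaves $\bar\ell_m=I*\ell_m$ carry the labels $i^\lambda[\Theta_m]$, which are not the $[\Theta_m]$ in reverse order. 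The paper closes exactly this point with its ``simple trick'': the dual labels of the reversed leaves, $[i(\Theta_m)^*]$, form a \emph{decreasing} sequence whose intersection is $\psi^*_\lambda(\bar c)=\varphi^*_\lambda(\alpha)$, and the top point of $[i(\Theta_m)^*]$ is precisely the bottom line $B_m$ of $[\Theta_m]$; hence $B_m\to\varphi^*_\lambda(\alpha)$ while $b_m\in B_m$ and $b_m\to\varphi_\lambda(\alpha)$, and only then does incidence pass to the limit. This identification of the line components of your finite-stage flags with points inside the shrinking \emph{dual} domains of the reversed labels is the missing idea; without it the step as written does not go through.
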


We will construct $\varphi_\lambda:\Lambda_{\Gamma_o}\to \PP(V)$ and $\varphi_\lambda^\ast:\Lambda_{\Gamma_o}\to \PP(V^*) $ separately. First label the oriented leaves of $\mathcal L$ by elements of the orbit of $[\Theta]$ under $\mathfrak{G}^\lambda$ as in Section \ref{geradores}. Again, let $\alpha \in \Lambda_{\Gamma_o}$ and let $c$ be a $\Gamma_o$-nonwandering oriented geodesic whose head is $\alpha$. Consider as in Section \ref{sub:combinatorics} the sequence $(\ell_m)_{m \in \mathbb Z}$ of oriented leaves of $\mathcal{L}$ succesively crossed by $c$.

According to Remark \ref{aninha}, the labels $[\Theta](\ell_m)$ of these oriented leaves $\ell_m$ of $\mathcal{L}$ give us a sequence of convex marked boxes $[\Theta_m] := [\Theta](\ell_m)$ satisfying the following nesting property in $\PP(V)$:
\begin{equation} \label{seq}
\dotsc \supset  \interior{[\Theta_{-1}]} \supset \interior{[\Theta_{0}]} \supset \interior{[\Theta_{1}]}  \supset \dotsc \supset \interior{[\Theta_{m}]} \supset \dotsc
\end{equation}

\begin{lemma}\label{pro:intersection}
 The intersection $\bigcap_{m \in \mathbb{Z}} \interior{[\Theta_m]} = \bigcap_{n \in \mathbb{Z}} \interior{[\Theta_{2n}]}$ is reduced to a single point in $\PP(V)$. Moreover, this intersection
 is the same for all geodesics $c$ with head $\alpha$.
\end{lemma}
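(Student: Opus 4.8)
The plan is to exploit the nesting chain (\ref{seq}) together with the uniform expansion of the Hilbert metric provided by Lemma \ref{expansion}, in the quantitative form packaged by the constant $C > 1$ of Definition \ref{def:C}. First I would observe that since $\interior{[\Theta_{2(n+1)}]} \subset \interior{[\Theta_{2n}]}$ and both are convex interiors of marked boxes of the form $[\Theta](\gamma_n w e_0)$ with $w \in W$, Lemma \ref{le:cpreserved} (projective invariance of distortion) reduces the step-by-step distortion to the finite list $\{C(\interior{[\Theta](we_0)}, \interior{[\Theta](e_0)}) : w \in W\}$, whose minimum is exactly $C$. Iterating via Lemma \ref{le:cdecrease}, the distortion from $\interior{[\Theta_0]}$ down to $\interior{[\Theta_{2n}]}$ is at least $C^{n}$, and the same on the negative side reading the chain upward. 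So the Hilbert metric $d^h_{\interior{[\Theta_{2n}]}}$ dominates $C^{|n|} d^h_{\interior{[\Theta_0]}}$ on the nested pieces.

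Next I would show the intersection is nonempty and a single point. Nonemptiness is immediate: the $\interior{[\Theta_{2n}]}$ form a decreasing (for $n \to +\infty$) and increasing (for $n\to -\infty$) family of nonempty open convex sets whose closures are nested and compact in a common affine chart (all contained in the fixed properly convex $\interior{[\Theta_0]}$), so $K := \bigcap_{n\geq 0}\overline{\interior{[\Theta_{2n}]}}$ is a nonempty compact convex set. To see $K$ is a point, suppose it contained two distinct points $x,y$. For every $n\geq 0$ they lie in $\interior{[\Theta_0]}$, hence $d^h_{\interior{[\Theta_{2n}]}}(x,y)$ is finite only if $x,y \in \interior{[\Theta_{2n}]}$; more carefully, since $x,y$ lie in every $\overline{\interior{[\Theta_{2n}]}}$ and these shrink, if they both lay in the open set for all $n$ we would get $d^h_{\interior{[\Theta_{2n}]}}(x,y) \geq C^n d^h_{\interior{[\Theta_0]}}(x,y) \to \infty$, contradicting the fact that the Hilbert distance between two points of a properly convex domain contained in an affine chart of bounded diameter is bounded by a quantity independent of the domain only through the cross-ratio — actually the cleanest contradiction is that the Hilbert diameter of $\interior{[\Theta_{2n}]}$ relative to $\interior{[\Theta_0]}$ forces $d^h_{\interior{[\Theta_{2n}]}}(x,y)\to\infty$ while $x,y$ stay at bounded $d^h_{\interior{[\Theta_0]}}$-distance, impossible. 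Hence $x=y$ and $\bigcap_n \interior{[\Theta_{2n}]}$ is a single point; the equality $\bigcap_{m\in\mathbb Z}\interior{[\Theta_m]} = \bigcap_{n\in\mathbb Z}\interior{[\Theta_{2n}]}$ holds because the odd-indexed boxes interleave the even ones in the chain (\ref{seq}).

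Finally, for the independence of the point on the choice of $c$ with head $\alpha$: any two $\Gamma_o$-nonwandering geodesics $c,c'$ with the same head $\alpha$ are forward asymptotic, so for $n$ large they cross exactly the same leaves of $\mathcal L$ (the combinatorial coding of Lemma \ref{lemma:W} depends only on the forward endpoint), whence the tails $(\ell_{2n})_{n\geq N}$ of their leaf-sequences agree up to index shift; therefore the nested intersections $\bigcap_{n\geq 0}\interior{[\Theta_{2n}]}$ coincide, being determined by a cofinite subfamily. I expect the main obstacle to be the second step: making rigorous that "two distinct points cannot survive the infinite nesting" requires being a little careful about whether the limit point lies in the open interiors or only in their closures, and phrasing the expansion estimate so that it genuinely forces a contradiction — this is where one must combine Lemma \ref{le:cdecrease} with a uniform bound on Hilbert distances inside the fixed domain $\interior{[\Theta_0]}$.
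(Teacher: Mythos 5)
Your first step (reducing the step-by-step distortion to the finite set $W$ via Lemmas \ref{le:cpreserved} and \ref{lemma:W}, then iterating with Lemma \ref{le:cdecrease} to get $d^h_{\interior{[\Theta_{2n}]}} \geq C^{\,n}\, d^h_{\interior{[\Theta_0]}}$ for $n\geq 0$) and your last step (two geodesics with the same head ultimately cross the same leaves, so the intersection is unchanged) are exactly what the paper does. But the central step --- that two distinct points cannot survive the nesting --- has a genuine gap, which you yourself flag but do not close. The statement you offer as a contradiction, namely that $d^h_{\interior{[\Theta_{2n}]}}(x,y)\to\infty$ while $d^h_{\interior{[\Theta_0]}}(x,y)$ stays bounded, is not contradictory at all: this is precisely the normal behaviour of Hilbert metrics of shrinking domains when $x,y$ sit close to (or on) the boundary of the small domains, and your vaguer appeal to ``a bound independent of the domain through the cross-ratio'' is not available for the two intersection points themselves, since the chord through them may terminate on $\partial\,\interior{[\Theta_{2n}]}$ arbitrarily close to them. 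What is needed is a bound on $d^h_{\interior{[\Theta_{2n}]}}(x,y)$ that is \emph{uniform in $n$}, and for the putative pair of intersection points no such bound is forthcoming.

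The paper's fix is a small but essential trick: if the intersection contained two points $a\neq b$, choose auxiliary points $x\neq y$ \emph{strictly inside} the segment $]a,b[$. Since $a$ and $b$ lie in every $\interior{[\Theta_{2n}]}$ (here one uses $\lambda\in\IR$, so closures nest inside the open interiors and the intersection of closures equals the intersection of the open sets), the chord of $\interior{[\Theta_{2n}]}$ through $x,y$ meets the boundary at points lying beyond $a$ and beyond $b$; monotonicity of the cross-ratio then gives the uniform upper bound $d^h_{\interior{[\Theta_{2n}]}}(x,y) \leq \tfrac12\log\left([a:x:y:b]\right)$ for every $n$, which contradicts $d^h_{\interior{[\Theta_{2n}]}}(x,y)\geq C^{\,n} d^h_{\interior{[\Theta_0]}}(x,y)\to\infty$. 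This is the missing idea; with it your argument becomes the paper's proof. (Two further cosmetic points: your assertion that $d^h_{\interior{[\Theta_{2n}]}}$ dominates $C^{|n|}d^h_{\interior{[\Theta_0]}}$ is backwards for $n<0$, though only $n\to+\infty$ is needed; and for nonemptiness it is cleaner to intersect the closures, which are a decreasing sequence of nonempty compacta, and then use the closure-in-interior nesting to identify this with the intersection of the open interiors.)
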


\begin{proof}
It follows from Lemmas \ref{le:cpreserved} and \ref{lemma:W} that for every $n \in \mathbb{Z}$, we have:
$$C(\interior{[\Theta_{2n+2}]}, \interior{[\Theta_{2n}]}) \geq C$$
where $C$ is the constant defined in Definition \ref{def:C}. If we look at all the closures of the convex domains $\interior{[\Theta_{2n}]}$, then it is a decreasing sequence
of compact sets as $n$ goes to infinity, and hence their intersection is not empty.

Assume that the intersection contains two different elements $a$, $b$. Let $x$, $y$ be two distinct elements in
the segment $]a, b[$. For every integer $n$, let $d^h_n(x,y)$ be the Hilbert metric between $x$ and $y$ with respect to the domain $\interior{[\Theta_{2n}]}$. According to Lemma \ref{le:cdecrease}, we
have:
$$d^h_n(x,y) \geq C^n d^h_0(x,y)$$
On the other hand, for every $n$, we have:
$$ d^h_n(x,y) \leq \frac{1}{2} \log ( [a:x:y:b])$$
which is a contradiction.
Therefore, the intersection $\bigcap_{m \in \mathbb{Z}} \interior{[\Theta_m]} = \bigcap_{n \in \mathbb{Z}} \interior{[\Theta_{2n}]}$ is reduced to a single point in $\PP(V)$.

Moreover, since any other nonwandering oriented geodesic $c'$ with head $\alpha$ ultimately intersects the same leaves of $\mathcal L$, the intersection of the labels of leaves of $\mathcal{L}$ crossed by $c'$ is the same as for $c$. The lemma then follows.
\end{proof}

The previous Lemma provides:

\begin{definition}
For any $\Gamma_o$-nonwandering oriented geodesic $c$, we denote by $\psi_\lambda(c)$ the unique intersection point of the convex interiors of the labels of leaves of $\mathcal L$ crossed by $c$. Define a map $\varphi_\lambda:\Lambda_{\Gamma_o}\to \PP(V)$ by assigning to $\alpha \in \Lambda_{\Gamma_o}$ the point $\psi_\lambda(c)$ where $c$ is any geodesic with head $\alpha$.
\end{definition}

\begin{lemma}
  The map $\varphi_\lambda:\Lambda_{\Gamma_o}\to \PP(V)$ is continuous.
\end{lemma}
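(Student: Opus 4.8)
The plan is to deduce the continuity of $\varphi_\lambda$ from the already-established nesting structure together with the uniform distortion bound $C>1$ from Definition \ref{def:C}. The key point is that $\varphi_\lambda(\alpha)$ is pinned down inside nested convex domains whose Hilbert-diameters shrink geometrically, so small changes in $\alpha$ only move the defining sequence of leaves after a large combinatorial index, and hence move $\varphi_\lambda(\alpha)$ by an arbitrarily small amount.

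Here is the sequence of steps I would carry out. First, fix $\alpha \in \Lambda_{\Gamma_o}$ and a nonwandering geodesic $c$ with head $\alpha$, and let $(\ell_m)$ and $[\Theta_m] = [\Theta](\ell_m)$ be as in Section \ref{sub:combinatorics}, so that \eqref{seq} holds. By Lemma \ref{le:cpreserved} and Lemma \ref{lemma:W}, we have $C(\interior{[\Theta_{2n+2}]},\interior{[\Theta_{2n}]}) \ge C$ for all $n$, and then by Lemma \ref{le:cdecrease} the distortion $C(\interior{[\Theta_{2n}]},\interior{[\Theta_0]})$ grows at least like $C^n$. Second, I translate this into a metric statement: the Euclidean diameter (in a fixed affine chart containing $\interior{[\Theta_0]}$) of the closure of $\interior{[\Theta_{2n}]}$ tends to $0$ as $n\to+\infty$. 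Indeed, if it did not, one could pick two points $a,b$ of $\bigcap_n \overline{\interior{[\Theta_{2n}]}}$ — which is non-empty and, by Lemma \ref{pro:intersection}, a single point, but before invoking that, one argues exactly as in the proof of that lemma: two distinct points $x,y$ in a segment inside all the $\interior{[\Theta_{2n}]}$ would have $d^h_n(x,y) \ge C^n d^h_0(x,y) \to \infty$ while being bounded above by the cross-ratio in any fixed larger domain, a contradiction. So the diameters shrink, and quantitatively there is a function $m \mapsto \eta(m)$ with $\eta(m)\to 0$ bounding the diameter of $\overline{\interior{[\Theta_{2m}]}}$ uniformly — uniformly because the finite generating set $W$ and the constant $C$ are the same for every $\alpha$, so the estimate depends only on the index $2m$ and not on $\alpha$ or on the choice of $c$.

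Third, I pass to the continuity argument proper. Given $\alpha_k \to \alpha$ in $\Lambda_{\Gamma_o}$, I want $\varphi_\lambda(\alpha_k)\to\varphi_\lambda(\alpha)$. Choose the nonwandering geodesic $c$ with head $\alpha$ and, for each $k$, a nonwandering geodesic $c_k$ with head $\alpha_k$; since $\Gamma_o$ is convex cocompact and the leaves of $\mathcal L$ are locally finite, for any fixed $N$ the geodesics $c_k$ cross the same first $N$ leaves $\ell_0,\dots,\ell_{2N}$ as $c$ (appropriately indexed) once $k$ is large — this is because the combinatorial itinerary of a geodesic through the tiling by hyperideal triangles is locally constant in the endpoint, away from the (countable) set of endpoints lying on a leaf, and $\alpha_k \to \alpha$ forces the itineraries to agree on longer and longer initial segments. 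Consequently both $\varphi_\lambda(\alpha_k)$ and $\varphi_\lambda(\alpha)$ lie in $\overline{\interior{[\Theta_{2N}]}}$, whose diameter is at most $\eta(N)$. Given $\epsilon>0$, pick $N$ with $\eta(N)<\epsilon$ and then $k$ large; this gives $|\varphi_\lambda(\alpha_k)-\varphi_\lambda(\alpha)|<\epsilon$, proving continuity. The analogous statement for $\varphi_\lambda^\ast$ is obtained by the dual argument, working in $\PP(V^*)$ with the reversed inclusions of Remark \ref{oposto} and with the leaves crossed by $c$ with the opposite orientation (so that the relevant nesting is of the dual convex interiors $\interior{[\Theta_m^*]}$), which proves Proposition \ref{pro:constructphi}.

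The main obstacle I expect is the third step — making precise and rigorous the claim that the combinatorial itinerary of a nonwandering geodesic is locally constant in its endpoint on $\Lambda_{\Gamma_o}$, and in particular handling the exceptional endpoints that lie on a leaf of $\mathcal L$ (where the itinerary jumps). One clean way around this is to avoid itineraries of endpoints altogether and instead argue directly with the compact sets: set $K_N := \overline{\interior{[\Theta(\ell_{2N})]}}$ for the geodesic $c$ realizing $\alpha$; show that $\varphi_\lambda^{-1}(K_N^{\mathrm{int}})$, or rather the set of $\beta \in \Lambda_{\Gamma_o}$ whose defining sequence eventually enters $\interior{[\Theta(\ell_{2N})]}$, contains a neighbourhood of $\alpha$ in $\Lambda_{\Gamma_o}$ — this follows because $\interior{[\Theta(\ell_{2N})]}$ is the convex interior associated to a leaf $\ell_{2N}$, and the set of endpoints $\beta$ for which the geodesic to $\beta$ eventually stays on the correct side of $\ell_{2N}$ is open in $\partial\mathbb{H}^2$ (it is determined by $\beta$ lying in an open subarc of $\partial\mathbb{H}^2\setminus\ell_{2N}$). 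Combined with the uniform shrinking $\eta(N)\to 0$, this yields the modulus-of-continuity estimate without ever having to discuss itineraries pointwise, and that is the formulation I would ultimately write down.
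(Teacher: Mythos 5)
Your proof is correct and is essentially the paper's argument: the paper likewise uses that the nested closures $\overline{\interior{[\Theta_{2n}]}}$ shrink to the single point $\varphi_\lambda(\alpha)$, so some $\interior{[\Theta_{2n}]}$ lies in a given neighbourhood, and that any $\beta\in\Lambda_{\Gamma_o}$ close to $\alpha$ lies on the correct side of $\ell_{2n}$ (an open condition, exactly your final ``clean'' formulation; note the endpoints of leaves of $\mathcal L$ are not in $\Lambda_\Gamma$, so your worry about exceptional endpoints is vacuous), whence $\varphi_\lambda(\beta)\in\interior{[\Theta_{2n}]}$. The only caveat is your claim of a modulus $\eta(m)$ uniform in $\alpha$: citing the fixed set $W$ and constant $C$ does not by itself convert Hilbert-distortion growth into a uniform Euclidean diameter bound (that would need an extra compactness/K\"onig-type argument), but this uniformity is also unnecessary, since continuity at $\alpha$ only requires the pointwise shrinking along $\alpha$'s own sequence, which you already obtain from the singleton intersection.
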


\begin{proof}
Let $\mathcal{U}$ be any open neighborhood of $\varphi_\lambda(\alpha)$ in $\PP(V)$. Then there exists a marked box $[\Theta_{2n}]$ such that $\varphi_\lambda(\alpha) \in \interior{[\Theta_{2n}]} \subset \mathcal{U}$ since $\bigcap_{n \in \mathbb{Z}} \interior{[\Theta_{2n}]}$ is a singleton. Hence, if $\beta \in \Lambda_{\Gamma_o}$ is sufficiently close to $\alpha$, then every geodesic with head $\beta$ will intersect $\ell_{2n}$, and thus $\varphi_\lambda(\beta)$ is contained in the interior of $[\Theta_{2n}]$.
\end{proof}

In a similar way, we define the map $\varphi_\lambda^\ast:\Lambda_{\Gamma_o}\to \PP(V^*) $. By Remark \ref{oposto}, the inclusions of the sequence (\ref{seq}) along the oriented nonwandering geodesic $c$ are reversed when viewed in $\PP(V^*)$:
\begin{equation*} \label{seq2}
\dotsc  \subset \interior{[\Theta_{-1}^*]} \subset \interior{[\Theta_{0}^*]} \subset \interior{[\Theta_{1}^*]}  \subset  \dotsc \subset \interior{[\Theta_{m}^*]} \subset \dotsc
\end{equation*}
We can show that this nested sequence of convex domains is again uniform with respect to the Hilbert metrics; in particular, the
intersection $\bigcap_{m \in \mathbb{Z}} \interior{[\Theta_{m}^*]}$ is reduced to a single point in $\PP(V^*)$, and two nonwandering geodesics $c$ and $c'$ sharing the same \emph{tail} $\alpha$ leads to the same point. Thus, it provide:

\begin{definition}
For any $\Gamma_o$-nonwandering oriented geodesic $c$, we denote by $\psi^*_\lambda(c)$ the unique intersection point of the convex interiors of the dual marked boxes of the labels of leaves of $\mathcal L$ crossed by $c$. Define a map $\varphi^*_\lambda:\Lambda_{\Gamma_o}\to \PP(V^*)$ by assigning to $\alpha \in \Lambda_{\Gamma_o}$ the point $\psi^*_\lambda(c)$ where $c$ is any geodesic with tail $\alpha$.
\end{definition}

The maps $\varphi_\lambda$ and $\varphi_\lambda^*$ are obviously $\Gamma_o$-equivariant, but it is not clear from our construction that they combine to a map in the flag variety, \ie that $\varphi_\lambda(\alpha)$ is a point in the line $\varphi_\lambda^*(\alpha)$ of $\PP(V)$. However, a simple trick, which we describe now, makes it obvious.

We work in the setting of the proof of Proposition \ref{pro:intersection}: Let $\bar{\ell}_n$ be the leaf $\ell_n$ with the reversed orientation, \ie $\bar{\ell}_n = I*\ell_n$ (see Figure \ref{fig:sequence_iL}). Then the dual labels
$$([\Theta](\bar{\ell}_n))^* = ([\Theta](I * \ell_n))^* =(\Xi(I)[\Theta](\ell_n))^* = [i(\Theta_{n})^*]$$
form a nested sequence:
\begin{equation*}
\dotsc \supset \interior{[i(\Theta_{-1})^*]} \supset \interior{[i(\Theta_{0})^*]} \supset \interior{[i(\Theta_{1})^*]} \supset  \dotsc \supset \interior{[i(\Theta_{m})^*]} \supset  \dotsc
\end{equation*}
The common intersection point $\bigcap_{m \in \mathbb{Z}} \interior{[i(\Theta_{m})^*]}$ is clearly $\psi_\lambda^*(\bar{c})$, where $\bar{c}$
is the geodesic $c$ with the reversed orientation. In particular, if $\alpha$ is the head of $c$, then this intersection point is $\varphi^*_\lambda(\alpha)$.

Now the key point is that the top point $t^*_m$ of each $[i(\Theta_{m})^*]$ is the bottom line of $[\Theta_m]$. The bottom points $b_m$ of $[\Theta_m]$ converge to $\psi_\lambda(c)$ whereas $t^*_m$ converge to $\psi^*_\lambda(\bar{c})$. Since every $t^*_m$ contains $b_m$, the line
 $\varphi_\lambda^*(\alpha)$ of $\PP(V)$ also contains $\varphi_\lambda(\alpha)$.
 Hence, the maps $\varphi_\lambda$ and $\varphi_\lambda^\ast$ combine to a $\Gamma_o$-equivariant map:
$$\Phi^\lambda=(\varphi_\lambda,\varphi^{\ast}_\lambda): \Lambda_{\Gamma_o} \to \FF\subset \PP(V) \times \PP(V^*) $$
which complete the proof of Proposition \ref{pro:constructphi}. \qed

\begin{figure}[ht!]
\centering
\includegraphics[scale=0.7]{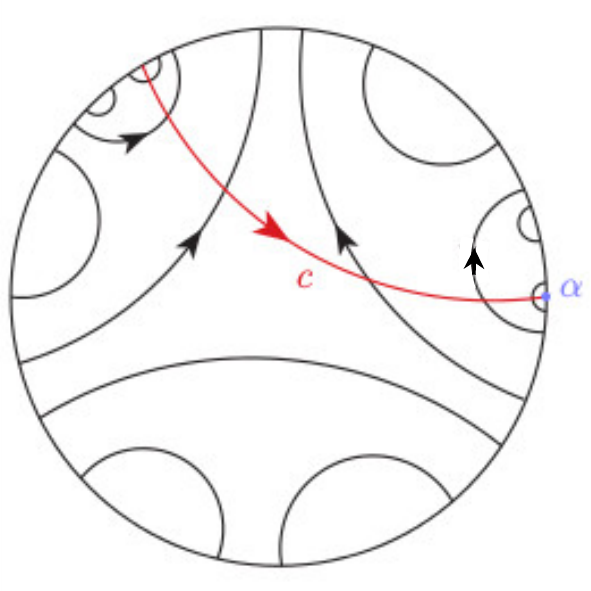}
\scriptsize
\put(-79,62){$\bar{\ell}_0$}
\put(-46,68){$\bar{\ell}_1$}
\put(-93,81){$\bar{\ell}_{-1}$}
\caption{A sequence $(\bar{\ell}_m)$ of leaves of $\mathcal{L}$ with reversed orientation.}
\label{fig:sequence_iL}
\end{figure}

\subsection{The Anosov property of new representations}

In this subsection, we construct the maps:
$$
\nu_+:\Omega(\phi^t)\subset T^1(\Gamma_o \backslash\mathbb{H}^2) \to Q(V) \quad \textrm{and} \quad \nu_-:\Omega(\phi^t)\subset T^1(\Gamma_o \backslash\mathbb{H}^2) \to Q(V^*)
$$
The definition is as follows: let $(x,v) \in \Omega(\phi^t)$ and let $c$ be the $\Gamma_o$-nonwandering oriented geodesic such that $c(0)=x$ and $c'(0)=v$. We denote by $c_-$ (resp. $c_+$) the tail (resp. head) of $c$.

If $x$ lies on a leaf $\ell$ of $\mathcal L$, which is oriented so that it is crossed by $c$ from the right to the left, then $\varphi_\lambda(c_+)$ (resp. $\varphi^{*}_\lambda(c_-)$) lies in the convex interior of the label $[\Theta]$ of $\ell$ (resp. in $\interior{[\Theta^*]}$). Define
\begin{itemize}
\item $\nu_+(x,v)$ as the Hilbert norm on $T_{\varphi_\lambda(c_+)}\PP(V)$ associated to $\interior{[\Theta]}$ in $\PP(V)$, and
\item $\nu_-(x,v)$ as the Hilbert norm on $T_{\varphi^{*}_\lambda(c_-)}\PP(V^*)$ associated to $\interior{[\Theta^*]}$ in $\PP(V^*)$.
\end{itemize}

Now if $x=c(0)$ does not lie on a leaf of $\mathcal{L}$, then let $c(-t_-)$ (resp. $c(t_+)$) be the first intersection point between $c$ and $\mathcal L$ in the past (resp. future). Observe that there exist uniform lower and upper bounds $\varepsilon_-$
and $\varepsilon_+$ of the time period, for which a nonwandering geodesic crosses a connected component of $\mathbb{H}^2 \setminus \mathcal{L}$, \ie
$\varepsilon_- \leq t_+ + t_- \leq \varepsilon_+$.
Define then $\nu_\pm(x,v)$ as the barycentric combination:
 $$\frac{t_-}{t_++t_-}\nu_\pm(c(t_+)) + \frac{t_+}{t_++t_-}\nu_\pm(c(-t_-))
 $$

Recall that $C>1$ is the uniform lower bound on the expansion of the Hilbert metrics when two leaves of $\mathcal{L}$ are crossed (see Definition \ref{def:C}). Let $N$ be the smallest integer such that $C^N >2$. It follows that the norm $\nu_+(c(t), c'(t))$ is at least doubled and $\nu_-(c(t), c'(t))$  divided by $2$ when $c$ crosses at least $2N$ leaves of $\mathcal L$. Moreover, this surely happens when one travels along $c$ for a time period $T = 2N\varepsilon_+$, and therefore the item $(ii)$ of Definition \ref{def:anosov} is satisfied (see also Remark \ref{rk:doubling}).

The proof of our main Theorem \ref{thm:main} is now complete.

\section{Extension of new representations to $\MG$}\label{conclusion}

In this section, we will give the proof of Theorem \ref{thm:main2}. In Sections \ref{sec:new} and \ref{sec:newisanosov}, we built a representation $\rho^\lambda_\Theta : \Gamma_o \to \HH$ for every marked box $[\Theta]$ and every $\lambda = (\varepsilon,\delta) \in \mathbb{R}^2$, and prove that if $\lambda \in \IR$, then $\rho^\lambda_\Theta$ is Anosov. In other words, we exhibit a subspace of $\Rep(\Gamma_o, \HH)$ which is made of Anosov representations and the boundary of which contains the restrictions to $\Gamma_o$ of the Schwartz representations. We now ask the following natural question:
\begin{center}
\emph{When does the representation $\rho^\lambda_\Theta: \Gamma_o \to \HH$ extend to a representation $\bar{\rho}^\lambda_\Theta : \Gamma \to \GG$?}
\end{center}

A main ingredient required for this extension is to find the image of the involution $I$: This image should be a polarity (see Remark \ref{rk:polarity}), and since we know the images of $R_*$ and $IR_*I$ under $\rho^\lambda_\Theta$, the problem of finding the image of $I$ reduces to:
\begin{center}
\emph{Find a polarity $\mathcal P$ such that $\mathcal{B}_{\Theta}^\lambda = \mathcal{P} \mathcal{A}_{\Theta}^\lambda \mathcal P$.}
\end{center}

As usual, equip $V$ with a $\Theta$-basis of $V$ and $V^*$ with its dual basis. Recall the proof of Lemma \ref{lemaprinc2}:  The projective transformation $\mathcal{A}_{\Theta}^\lambda$ does not depend on $\lambda$ and it corresponds to the matrix $A_{\Theta}$
in the proof of Lemma \ref{lemaprinc}. The projective transformation $\mathcal{B}_{\Theta}^\lambda$ is exactly $\NT^{-1}\mathcal{B}_{\Theta}^0\NT$
and $\mathcal{B}_{\Theta}^0 = \mathcal{D}_\Theta^0 \mathcal{A}_{\Theta}^0 \mathcal{D}_\Theta^0$, where $\mathcal{D}_\Theta^0$ corresponds to the matrix $D_{\Theta}$ in the proof of Lemma \ref{lemaprinc}. Since $\mathcal{B}_{\Theta}^0$ corresponds to the matrix:
$$B_\Theta^0 := D_\Theta^{-1} \; ^t(A_\Theta)^{-1} D_\Theta$$
the transformation $\mathcal{B}_{\Theta}^\lambda$ is represented by the matrix:
$$B_\Theta^\lambda := \NT^{-1} D_\Theta^{-1} \;^t(A_\Theta)^{-1} D_\Theta \NT$$
Now, the problem is to find an invertible symmetric matrix $S$ such that:
\begin{equation*}\label{eq:SS}
  S^{-1} \;^t(A_\Theta)^{-1} S = B_\Theta^\lambda
\end{equation*}
When $\Theta$ is special, the solution is easy: In this case, since $D_\Theta$ is the identity matrix, we simply let $S = \NT$.

From now on, assume that $\Theta$ is not special. In the appendix, we show through a computation that the existence of a non-zero symmetric matrix $S$
satisfying the equation
$${}^t(A_\Theta)^{-1} S = S B_\Theta^\lambda$$
is equivalent to:
\begin{equation}\label{eq:00}
  \det({\rm Id} - A_\Theta B_\Theta^\lambda)=0
\end{equation}
and by another computation, Equation \eqref{eq:00} holds if and only if:
\begin{equation*}
 0 = h (\varepsilon,\delta) := (\zeta_t^2 + \zeta_b^2 - 2 \zeta_t^2 \zeta_b^2) \, c_{\varepsilon} s_{\delta} (2 c_{\varepsilon} c_{\delta}  - s_{\varepsilon} ) -   \zeta_t \zeta_b (\zeta_t^2-\zeta_b^2) \, s_{\varepsilon}  ( c_{\varepsilon} c_{\delta}   - s_{\varepsilon}-1)
\end{equation*}
where $c_x = \cosh(x)$ and $s_y = \sinh(y)$.

\begin{figure}[ht!]
\centering
\includegraphics[scale=0.5]{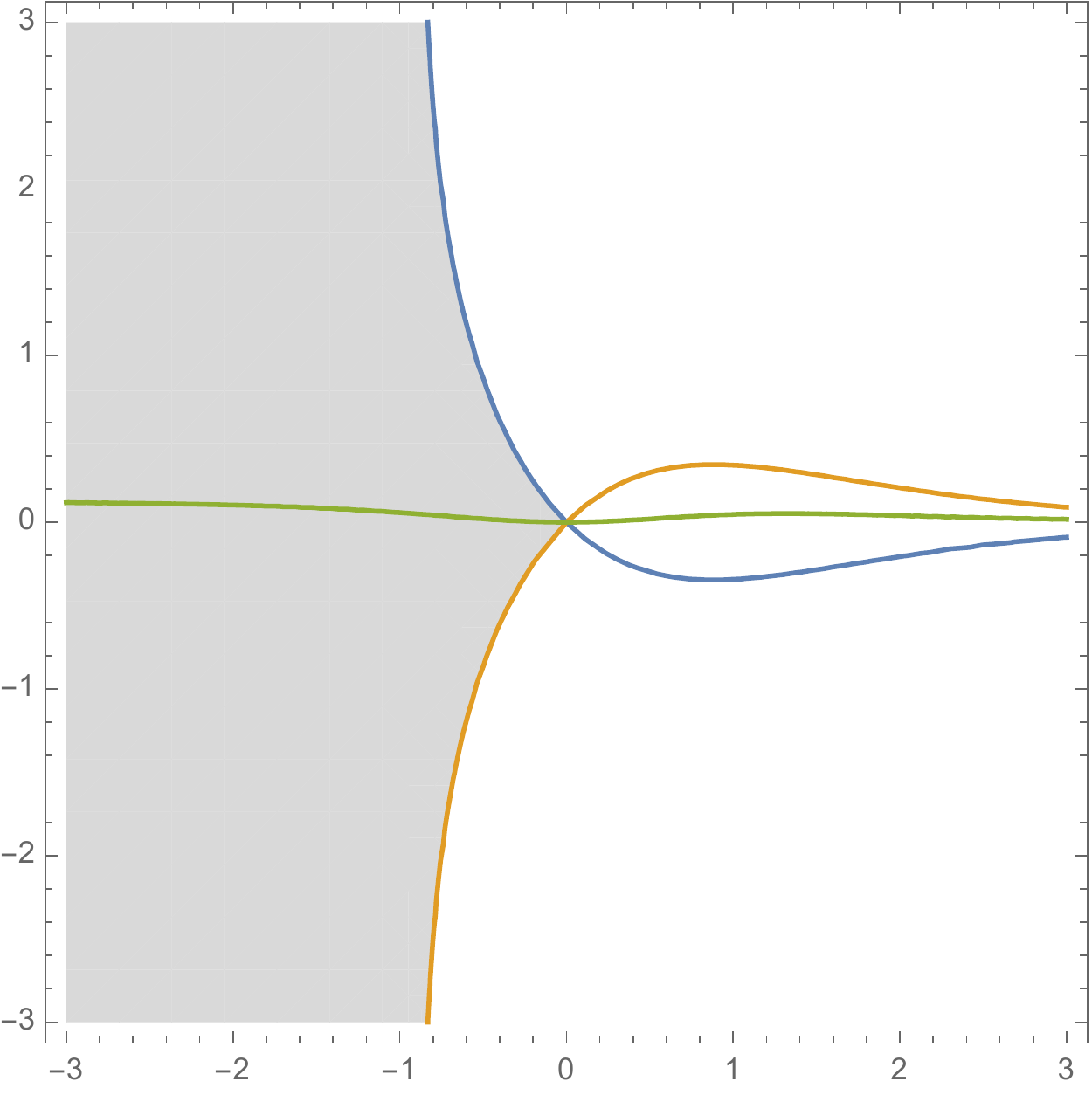}
\footnotesize
\put (-160, 160){$\mathcal{R}$}
\put (-155, 102){$h (\varepsilon,\delta) = 0$}
\put (-105, 140){$f(\varepsilon,\delta)= 0$}
\put (-105, 47){$f(\varepsilon,-\delta)= 0$}
\caption{The equation $h (\varepsilon,\delta) = 0$ is drawn in green.}
\label{fig:curve_h}
\end{figure}

Let $\mathcal{C} = \{(\varepsilon,\delta) \in \mathbb{R}^2 \mid h (\varepsilon,\delta) = 0 \}$ (see Figure \ref{fig:curve_h}). Since the invertibility of the matrix $S$ is an open condition, there exists an open neighborhood $\mathcal{U}$ of $(0,0)$ in $\mathbb{R}^2$ such that for every $ \lambda  \in \mathcal{C} \cap \mathcal{U}$, the representations $\rho^\lambda_\Theta$ extends to a representation $\bar{\rho}^\lambda_\Theta : \Gamma \to \GG$. Moreover, the following computation
$$
\frac{\partial f}{\partial \delta} (0,0) = -1 \quad \textrm{and} \quad \frac{\partial h}{\partial \delta} (0,0) = 2(\zeta_t^2 + \zeta_b^2 - 2 \zeta_t^2 \zeta_b^2) \neq 0
$$
and the implicit function theorem tell us that there exist an neighborhood $V_\varepsilon \times V_\delta$ of $(0,0)$ and two functions $\delta_f : V_\varepsilon \rightarrow \mathbb{R}$ and $\delta_h : V_\varepsilon \rightarrow \mathbb{R}$ such that:
 \begin{align*}
 \{   (\varepsilon, \delta_f(\varepsilon)) \mid  \varepsilon \in V_\varepsilon \} &= \{ (\varepsilon,\delta) \in  V_\varepsilon \times V_\delta \mid f(\varepsilon,\delta)=0 \}  \\
\{   (\varepsilon, \delta_h(\varepsilon)) \mid  \varepsilon \in V_\varepsilon \} &= \{ (\varepsilon,\delta) \in  V_\varepsilon \times V_\delta \mid h(\varepsilon,\delta)=0 \}
\end{align*}
Also, another simple computation
$$
\frac{d \delta_f}{d \varepsilon} (0) = -1 \quad \textrm{and} \quad \frac{d \delta_h}{d \varepsilon} (0) = 0
$$
shows that there exists an interval $\mathcal{V} := \,]\varepsilon_0, 0] \subset V_\varepsilon $ such that:
$$
(\varepsilon, \delta_h( \varepsilon)) \in \mathcal{R} \,\textrm{ for all }\, \varepsilon \in \mathcal{V} \quad \textrm{and} \quad
(\varepsilon, \delta_h( \varepsilon)) \in \IR \,\textrm{ for all }\, \varepsilon \in \interior{\mathcal{V}}
$$
 Therefore, if we let $\lambda = (\varepsilon, \delta_h(\varepsilon))$ for every $\varepsilon \in \mathcal{V}$, then the representation $\rho_{\Theta}^{\lambda} $ extends naturally to a representation $\bar{\rho}_{\Theta}^{\lambda} : \Gamma \to \GG$ when $\varepsilon \in \mathcal{V}$, it is Anosov when $\varepsilon \in \interior{\mathcal{V}}$, and it is the restriction of the Schwartz representation when $\varepsilon =0$.
It finishes the proof of the main Theorem \ref{thm:main2}.

\section{New representations in the representation variety}\label{section:InVariety}

In this section, we use the same notation as in Section \ref{conclusion} and show that the $\HH$-orbit of new representations in $\Rep(\Gamma_o, \HH)$ has a non-empty interior.

We denote by $\pi : \GL(3,\mathbb{R}) \rightarrow \mathrm{SL}(3,\mathbb{R})$ the composition of the projection $\GL(3,\mathbb{R}) \rightarrow \PGL(3,\mathbb{R})$ with the natural isomorphism $\PGL(3,\mathbb{R}) \simeq \mathrm{SL}(3,\mathbb{R})$, and identify $\HH$ with $\mathrm{SL}(3,\mathbb{R})$.

\begin{lemma}\label{lemma:C-H}
Let $A \in \mathrm{SL}(3,\mathbb{R})$. If $A \neq \mathrm{Id}$, then:
$$ A^3 = \mathrm{Id} \quad \textrm{if and only if} \quad \mathrm{tr}(A) = \mathrm{tr}(A^{-1}) =0 $$
\end{lemma}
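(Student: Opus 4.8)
The plan is to reduce everything to the characteristic polynomial of $A$. Since $\det(A)=1$, for a $3\times 3$ matrix one has
$$\chi_A(x)=x^3-\mathrm{tr}(A)\,x^2+\mathrm{tr}(\wedge^2 A)\,x-\det(A)=x^3-\mathrm{tr}(A)\,x^2+\mathrm{tr}(A^{-1})\,x-1,$$
using the identity $\mathrm{tr}(\wedge^2 A)=\det(A)\,\mathrm{tr}(A^{-1})=\mathrm{tr}(A^{-1})$. This formula is the only computation needed.

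For the ``if'' direction, assume $\mathrm{tr}(A)=\mathrm{tr}(A^{-1})=0$. Then $\chi_A(x)=x^3-1$, and the Cayley--Hamilton theorem gives $A^3-\mathrm{Id}=\chi_A(A)=0$, that is $A^3=\mathrm{Id}$. (Note that the hypothesis $A\neq\mathrm{Id}$ is not used here.)

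For the ``only if'' direction, assume $A^3=\mathrm{Id}$ and $A\neq\mathrm{Id}$. Since $x^3-1=(x-1)(x^2+x+1)$ has simple roots, the minimal polynomial of $A$ is squarefree, so $A$ is diagonalizable over $\mathbb{C}$ with eigenvalues among the cube roots of unity $1,\omega,\bar\omega$, where $\omega=e^{2\pi i/3}$. If all eigenvalues equalled $1$, diagonalizability would force $A=\mathrm{Id}$, contrary to assumption; hence at least one eigenvalue is a primitive cube root of unity, and since $A$ is real its complex conjugate is also an eigenvalue, so $\omega$ and $\bar\omega$ both occur. The remaining eigenvalue $\lambda$ is then real (the two complex eigenvalues already form a conjugate pair) and satisfies $\lambda^3=1$, so $\lambda=1$. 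Therefore the spectrum of $A$ is exactly $\{1,\omega,\bar\omega\}$ and $\chi_A(x)=(x-1)(x^2+x+1)=x^3-1$; comparing with the displayed formula gives $\mathrm{tr}(A)=\mathrm{tr}(A^{-1})=0$.

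There is essentially no serious obstacle. The only points that require a little care are excluding a non-diagonalizable matrix with spectrum $\{1\}$ (handled by squarefreeness of $x^3-1$) and the assertion that the third eigenvalue is real (the non-real eigenvalues of a real matrix occur in conjugate pairs, and a pair has already been accounted for).
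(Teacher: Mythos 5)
Your proof is correct and follows essentially the same route as the paper, which simply invokes the Cayley--Hamilton theorem (citing Acosta, Lemma 4.2): the key point in both is that for $A\in\mathrm{SL}(3,\mathbb{R})$ the characteristic polynomial is $x^3-\mathrm{tr}(A)x^2+\mathrm{tr}(A^{-1})x-1$, so vanishing traces give $A^3=\mathrm{Id}$ by Cayley--Hamilton, and conversely $A^3=\mathrm{Id}$ with $A\neq\mathrm{Id}$ forces the spectrum $\{1,\omega,\bar\omega\}$. Your spelled-out treatment of diagonalizability and the conjugate-pair argument for the eigenvalues is accurate and fills in exactly the details the paper leaves to the reference.
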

\begin{proof}
It follows from Cayley-Hamilton theorem (see e.g. Acosta \cite[Lemma 4.2]{Acosta}).
\end{proof}

We denote by $\mathrm{M}(3,\mathbb{R})$ the set of $3 \times 3$ real matrices. Define a map
$$ \Psi : \mathrm{M}(3,\mathbb{R}) \times \mathrm{M}(3,\mathbb{R}) \rightarrow \mathbb{R}^6 $$
by assigning to any pair of matrices $( A := (A_{ij}), B := (B_{ij}))$ the $6$-tuple of polynomials $(\Psi_i(A,B) )_{i = 1, \dotsc, 6}$, where:
$$
{\small
\begin{tabular}{lll}
$\Psi_{1} = \det(A)-1$, & $\Psi_{2} = \mathrm{tr}(A)$, & $\Psi_{3} = A_{11}A_{22}+A_{22}A_{33}+A_{33}A_{11} - A_{12}A_{21} -A_{23}A_{32} - A_{31}A_{11}$ \\
$\Psi_{4} = \det(B)-1$, & $\Psi_{5} = \mathrm{tr}(B)$, & $\Psi_{6} = B_{11}B_{22}+B_{22}B_{33}+B_{33}B_{11} - B_{12}B_{21} -B_{23}B_{32} - B_{31}B_{11}$ \\
\end{tabular}}
$$

Observe that $\Psi_{3} = \mathrm{tr}(A^{-1})$ and $\Psi_{6} = \mathrm{tr}(B^{-1})$ for every $A, B \in \mathrm{SL}(3,\mathbb{R})$. By Lemma \ref{lemma:C-H}, the real algebraic variety $\Psi^{-1}(0)$ is isomorphic to a union of components of $\Rep(\Gamma_o, \HH)$ (cf. Lawton \cite{Lawton}).

\begin{lemma}\label{lemma:Tangentspace}
For every $\lambda \in \mathcal{R}$ and every convex marked box $[\Theta]$, the representation $\rho^\lambda_{\Theta}$ is a smooth point of $\Rep(\Gamma_o, \HH)$.
\end{lemma}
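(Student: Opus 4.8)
The plan is to show that $\rho^\lambda_\Theta$ is a \emph{smooth} (i.e. non-singular, or at least smoothly embedded) point of the real algebraic set $\Psi^{-1}(0)$ by computing the rank of the differential $d\Psi$ at the pair of matrices $(A_\Theta, B^\lambda_\Theta)$ representing $(\rho^\lambda_\Theta(R_*),\rho^\lambda_\Theta(IR_*I))$, and checking it is maximal. Recall that $\Rep(\Gamma_o,\HH)$, with $\Gamma_o \cong \mathbb{Z}/3 \ast \mathbb{Z}/3$, embeds as $\Psi^{-1}(0)$ inside $\mathrm{M}(3,\mathbb{R})\times\mathrm{M}(3,\mathbb{R}) \cong \mathbb{R}^{18}$, cut out by the six polynomials $\Psi_1,\dots,\Psi_6$. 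So I first want to argue that near $(A_\Theta,B^\lambda_\Theta)$ the six differentials $d\Psi_1,\dots,d\Psi_6$ are linearly independent; this forces $\Psi^{-1}(0)$ to be, locally, a smooth manifold of dimension $18 - 6 = 12$, hence $\rho^\lambda_\Theta$ is a smooth point. Since $\Gamma_o$ is a free product of two finite cyclic groups of order $3$, a quick dimension count ($\dim\HH - \dim(\text{centralizer in each factor})$, or simply: each order-$3$ element contributes a conjugacy class of dimension $8-2=6$, so $6+6=12$) confirms $12$ is the expected dimension of the whole representation variety, so smoothness at this point is equivalent to the differential having full rank $6$.

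The key steps, in order: (1) Split $d\Psi$ into the two independent blocks $d(\Psi_1,\Psi_2,\Psi_3)$ evaluated at $A = A_\Theta$ (depending only on the first factor $\mathrm{M}(3,\mathbb{R})$) and $d(\Psi_4,\Psi_5,\Psi_6)$ at $B = B^\lambda_\Theta$ (depending only on the second factor); since the two groups of variables are disjoint, it suffices to show each block has rank $3$ separately. (2) For a matrix $M\in\mathrm{SL}(3,\mathbb{R})$ with $\mathrm{tr}(M)=\mathrm{tr}(M^{-1})=0$, i.e. $M^3=\mathrm{Id}$ and $M\neq\mathrm{Id}$, compute $d(\det M), d(\mathrm{tr}\,M), d(\mathrm{tr}\,M^{-1})$ as linear functionals on $T_M\mathrm{M}(3,\mathbb{R})$: these are $X\mapsto \mathrm{tr}(\mathrm{adj}(M)X)$, $X\mapsto \mathrm{tr}(X)$, and $X\mapsto -\mathrm{tr}(M^{-1}XM^{-1})$ respectively. (3) Show these three functionals are linearly independent whenever $M$ is a nontrivial order-$3$ element of $\mathrm{SL}(3,\mathbb{R})$ — this is a clean linear-algebra fact: such an $M$ is conjugate to $\mathrm{diag}(1,\omega,\bar\omega)$ with $\omega = e^{2\pi i/3}$ (or to a rotation-type block form over $\mathbb{R}$), and in a basis adapted to the eigenspaces the three covectors visibly span a $3$-dimensional space (for instance, test on $X = \mathrm{diag}(x_1,x_2,x_3)$: one gets the system with matrix built from $1,\omega,\bar\omega$ and their products, which is invertible since $1,\omega,\bar\omega$ are distinct). (4) Apply step (3) to $M = A_\Theta$ and to $M = B^\lambda_\Theta$; both are nontrivial order-$3$ elements since $\rho^\lambda_\Theta$ is a genuine representation of $\Gamma_o$ and (for $\lambda\in\mathcal{R}$) is faithful by Theorem \ref{principalSch3}, so neither $\rho^\lambda_\Theta(R_*)$ nor $\rho^\lambda_\Theta(IR_*I)$ is the identity. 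Conclude that $d\Psi$ has rank $6$ at $(A_\Theta,B^\lambda_\Theta)$, so this is a smooth point of $\Psi^{-1}(0)\cong\Rep(\Gamma_o,\HH)$.

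The main obstacle I anticipate is step (3): making the linear independence of $\{d\det, d\,\mathrm{tr}, d\,\mathrm{tr}(\cdot^{-1})\}$ at an order-$3$ element both correct and clean. One has to be careful because for a general matrix these three functionals need not be independent (e.g. near the identity $\mathrm{tr}$ and $\mathrm{tr}(\cdot^{-1})$ have proportional differentials, and indeed $\mathrm{Id}$ is the singular point of this variety — which is exactly why Lemma \ref{lemma:C-H} excludes $A = \mathrm{Id}$). The cleanest route is to diagonalize over $\mathbb{C}$: write $M = PDP^{-1}$ with $D = \mathrm{diag}(1,\omega,\omega^2)$, reduce to the case $M = D$ by the conjugation-invariance of all three functionals, and then the three covectors restricted to diagonal perturbations $X = \mathrm{diag}(x_1,x_2,x_3)$ become $\sum_i (\prod_{j\neq i}\lambda_j)\,x_i$, $\sum_i x_i$, and $-\sum_i \lambda_i^{-2}x_i$ with $(\lambda_1,\lambda_2,\lambda_3) = (1,\omega,\omega^2)$; independence then follows from a $3\times 3$ Vandermonde-type determinant in the distinct values $1,\omega,\omega^2$ being nonzero. (Alternatively one may invoke the known fact, e.g. from Lawton or Heusener–Porti type results on character varieties of free products, that representations of $\mathbb{Z}/3 \ast \mathbb{Z}/3$ sending each generator to a nontrivial regular element are smooth points; but a direct differential computation keeps the paper self-contained.) Everything else — the block decomposition, the standard formulas for differentials of $\det$ and $\mathrm{tr}$, the identification of $\Psi^{-1}(0)$ with the representation variety via Lemma \ref{lemma:C-H} — is routine.
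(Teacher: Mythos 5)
Your proposal is correct, but it takes a genuinely different route from the paper. The paper proves the lemma by brute force: it augments $\Psi$ with the twelve off-diagonal coordinate functions to get a map $\tilde{\Psi}:\mathrm{M}(3,\mathbb{R})\times\mathrm{M}(3,\mathbb{R})\to\mathbb{R}^{18}$, computes the full Jacobian determinant of $\tilde{\Psi}$ at the explicit pair $(\pi(A_\Theta),\pi(B^\lambda_\Theta))$ as a closed formula in $\zeta_t,\zeta_b,\varepsilon,\delta$, and observes it is nonzero because $-1<\zeta_t,\zeta_b<1$ and $\varepsilon\le 0$ whenever $\lambda\in\mathcal{R}$; full rank of $D\Psi$ (hence smoothness) follows. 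You instead exploit the block structure of $\Psi$ and the general fact that any nontrivial element of order $3$ in $\mathrm{SL}(3,\mathbb{R})$ is regular semisimple with eigenvalues $1,\omega,\bar{\omega}$, so the differentials of $\det$, $\mathrm{tr}$ and $\mathrm{tr}(\cdot^{-1})$ (equivalently of $\Psi_3$, since at these points $d\Psi_3$ and $d\,\mathrm{tr}(\cdot^{-1})$ coincide) are independent by a Vandermonde computation in the eigenbasis; the passage through a complex diagonalization is harmless since real linear independence can be tested after complexification, and nontriviality of both images is guaranteed (e.g.\ by faithfulness on $\mathcal{R}$, or simply because $\mathcal{A}^\lambda_\Theta=\mathcal{A}^0_\Theta$ and $\mathcal{B}^\lambda_\Theta$ is conjugate to $\mathcal{B}^0_\Theta$). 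What each approach buys: yours is cleaner, computation-free, and strictly more general --- it shows \emph{every} point of $\Psi^{-1}(0)$, i.e.\ every representation sending both generators to nontrivial order-$3$ elements, is smooth, with no use of $\lambda\in\mathcal{R}$ or of the specific matrices; the paper's explicit choice of $\tilde{\Psi}$, on the other hand, establishes the stronger statement that $\tilde{\Psi}$ is a local diffeomorphism at these points, which is reused verbatim in the proof of the theorem of Section \ref{section:InVariety}, so if you substituted your argument you would need a small additional step there. Your side remarks (expected dimension $12$, identification of $\Psi^{-1}(0)$ with a union of components of $\Rep(\Gamma_o,\HH)$) are consistent with the paper's setup and do not affect the argument.
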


\begin{proof}
We claim that $\Psi^{-1}(0)$ is smooth at $(A,B) = (\pi(A_{\Theta}), \pi(B_{\Theta}^{\lambda}))$  for every $\lambda \in \mathcal{R}$ and every convex marked box $[\Theta]$. Indeed, consider a map
 $$
\tilde{\Psi} : \mathrm{M}(3,\mathbb{R}) \times \mathrm{M}(3,\mathbb{R}) \rightarrow \mathbb{R}^{6} \times \mathbb{R}^{6} \times \mathbb{R}^{6} $$ given by
$
\tilde{\Psi}(A,B) = \left( \Psi(A,B), (A_{ij})_{i \neq j}, (B_{ij})_{i \neq j} \right)
$.
A computation shows that if $\lambda = (\varepsilon,\delta) \in \mathcal{R}$ then $\varepsilon \leq 0$, and that:
\begin{multline*}
\left| \det\left( D\tilde{\Psi}|_{(A,B) = (\pi(A_{\Theta}), \pi(B_{\Theta}^{\lambda})) } \right) \right| \\
= \tfrac{9  (1+\zeta_t \zeta_b) (1-\zeta_t \zeta_b)^2 \left( 2 \cosh(2\varepsilon) (1+\zeta_t\zeta_b ) - \sinh(2\varepsilon) \left(e^{-\delta}(2+\zeta_t\zeta_b-\zeta_t^2)  + e^{\delta} (2+\zeta_t\zeta_b-\zeta_b^2) \right)  \right)}{2 (1-\zeta_t^2)^2 (1-\zeta_b^2)^2 }
\end{multline*}
which is non-zero because $-1 < \zeta_t, \zeta_b < 1$ and $\varepsilon \leq 0$. As a consequence, the points $(\pi(A_{\Theta}), \pi(B_{\Theta}^{\lambda}))$ on $\Psi^{-1}(0)$ are non-singular, which completes the proof.
\end{proof}

\begin{theorem}
Let $[\Theta_0]$ be a non-special convex marked box. Then there exists an open neighbourhood $\mathcal{U}$ of the Schwartz representation $\rho_{\Theta_0}$ in $\Rep(\Gamma_o, \HH)$ such that every representation in $\mathcal{U}$ is conjugate to a representation $\rho^\lambda_{\Theta}$ for some convex marked box $\Theta$ and some $\lambda \in \mathbb{R}^2$.
\end{theorem}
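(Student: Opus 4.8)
The plan is to realize the representations $\rho^\lambda_\Theta$, up to conjugacy, as the image of a single explicit real-analytic map $F$ with $12$-dimensional source and target, and to check that $dF$ is invertible at the point mapping to $\rho_{\Theta_0}$; the inverse function theorem then produces an open neighbourhood $\mathcal{U}$ of $\rho_{\Theta_0}$ contained in the image of $F$, and every point of that image is by construction conjugate to some $\rho^\lambda_\Theta$. Concretely: fix a $\Theta_0$-basis of $V$ and let $(\zeta_t^0,\zeta_b^0)\in\,]\!-\!1,1[^2\setminus\{(0,0)\}$ be the coordinates of $[\Theta_0]$; for $(\zeta_t,\zeta_b)$ near $(\zeta_t^0,\zeta_b^0)$ let $\Theta_{(\zeta_t,\zeta_b)}$ be the $(\zeta_t,\zeta_b)$-overmarked box in this basis (as $[\Theta_0]$ is non-special it is a manifold point of the orbifold of Corollary \ref{cor:box}, and these boxes represent a neighbourhood of it). Since $A_\Theta$, $D_\Theta$ of Lemma \ref{lemaprinc} and $\NT$ of Section \ref{subsec:new} are polynomial in their arguments, and $B^\lambda_\Theta=\NT^{-1}D_\Theta^{-1}\,{}^{t}(A_\Theta)^{-1}D_\Theta\,\NT$, so is $\rho^\lambda_\Theta=(\pi(A_\Theta),\pi(B^\lambda_\Theta))$; hence the map
$$ F:\;]\!-\!1,1[^2\times\mathbb{R}^2\times\HH\longrightarrow\Rep(\Gamma_o,\HH),\qquad F(\zeta_t,\zeta_b,\varepsilon,\delta,g)=g\,\rho^{(\varepsilon,\delta)}_{\Theta_{(\zeta_t,\zeta_b)}}\,g^{-1} $$
is real-analytic and sends $(\zeta_t^0,\zeta_b^0,0,0,\mathrm{Id})$ to $\rho_{\Theta_0}$. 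Its source has dimension $12$, and so does its target near $\rho_{\Theta_0}$: because $\Gamma_o\cong\mathbb{Z}/3\ast\mathbb{Z}/3$ and $\mathcal{A}^0_{\Theta_0},\mathcal{B}^0_{\Theta_0}$ are order-$3$ elements distinct from the identity — hence with three distinct eigenvalues, so each lies in the single $6$-dimensional conjugacy class $\mathcal{C}_3\subset\mathrm{SL}(3,\mathbb{R})$ of such elements — the variety $\Rep(\Gamma_o,\HH)$ is, near $\rho_{\Theta_0}$, the smooth $12$-manifold $\mathcal{C}_3\times\mathcal{C}_3$, consistently with Lemma \ref{lemma:Tangentspace}.

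Write $dF$ at the base point as the inner part $X\mapsto\big([X,\mathcal{A}^0_{\Theta_0}],[X,\mathcal{B}^0_{\Theta_0}]\big)$ on $\mathfrak{sl}(3,\mathbb{R})$, plus the outer part coming from the four parameters $(\zeta_t,\zeta_b,\varepsilon,\delta)$. The kernel of the inner part is the Lie algebra of the centralizer of $\rho_{\Theta_0}(\Gamma_o)$, and I would show this centralizer is trivial. It equals $Z(\mathcal{A}^0_{\Theta_0})\cap Z(\mathcal{B}^0_{\Theta_0})$; since $\mathcal{A}^0_{\Theta_0}$ is regular semisimple, $Z(\mathcal{A}^0_{\Theta_0})$ is a maximal torus, and the intersection is nontrivial only if $\mathcal{B}^0_{\Theta_0}$ preserves both the real eigenline and the real invariant plane of $\mathcal{A}^0_{\Theta_0}$ — i.e.\ only if $\rho_{\Theta_0}|_{\Gamma_o}$ is reducible over $\mathbb{R}$. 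That this fails for non-special $[\Theta_0]$ is a short check on the explicit matrices $A_{\Theta_0}$ and $B^0_{\Theta_0}=D_{\Theta_0}^{-1}\,{}^{t}(A_{\Theta_0})^{-1}D_{\Theta_0}$ of Lemma \ref{lemaprinc} (they share neither an eigenline nor an invariant plane), and is in the spirit of the irreducibility statement recalled in Section \ref{sec:anosov}. Hence the inner part of $dF$ is injective, with $8$-dimensional image equal to the tangent space of the $\HH$-orbit through $\rho_{\Theta_0}$.

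It then remains to show that the outer part maps $T_{(\zeta_t^0,\zeta_b^0,0,0)}\big(\,]\!-\!1,1[^2\times\mathbb{R}^2\big)$ onto a $4$-dimensional subspace transverse to that $\HH$-orbit; equivalently, that $(\zeta_t,\zeta_b,\varepsilon,\delta)\mapsto[\rho^{(\varepsilon,\delta)}_{\Theta_{(\zeta_t,\zeta_b)}}]$ into the character variety $(\mathcal{C}_3\times\mathcal{C}_3)/\HH$ — a $4$-manifold near $[\rho_{\Theta_0}]$ by the previous paragraph — is a local diffeomorphism at $(\zeta_t^0,\zeta_b^0,0,0)$. As the traces $\mathrm{tr}(\mathcal{A}^\lambda_\Theta),\mathrm{tr}((\mathcal{A}^\lambda_\Theta)^{-1}),\mathrm{tr}(\mathcal{B}^\lambda_\Theta),\mathrm{tr}((\mathcal{B}^\lambda_\Theta)^{-1})$ are all $0$, this character variety is coordinatized near $[\rho_{\Theta_0}]$ by four further trace functions of short words, for instance
$$ \big(\,\mathrm{tr}(\mathcal{A}^\lambda_\Theta\mathcal{B}^\lambda_\Theta),\;\mathrm{tr}(\mathcal{A}^\lambda_\Theta(\mathcal{B}^\lambda_\Theta)^{-1}),\;\mathrm{tr}((\mathcal{A}^\lambda_\Theta)^{-1}\mathcal{B}^\lambda_\Theta),\;\mathrm{tr}([\mathcal{A}^\lambda_\Theta,\mathcal{B}^\lambda_\Theta])\,\big), $$
and I would compute the $4\times4$ Jacobian of these four functions with respect to $(\zeta_t,\zeta_b,\varepsilon,\delta)$ at $(\zeta_t^0,\zeta_b^0,0,0)$ from the explicit $A_\Theta$ (which does not depend on $\lambda$), $D_\Theta$ and $\Sigma_{(\varepsilon,\delta)}$, and check that its determinant does not vanish when $-1<\zeta_t^0,\zeta_b^0<1$ are not both $0$ — the same kind of elementary but lengthy determinant computation as in the proof of Lemma \ref{lemma:Tangentspace}. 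Granting it, $dF$ is an isomorphism and the theorem follows.

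The main obstacle is exactly this last computation: it is what genuinely encodes that the two new parameters $\varepsilon,\delta$ deform $\rho_{\Theta_0}$ transversally to the two-parameter Schwartz family $\{[\rho_{\Theta'}|_{\Gamma_o}]\}$ rather than merely inside it. A conceptual indication that the $(\varepsilon,\delta)$-directions do leave the Schwartz locus is that for $\lambda\in\IR$ near $0$ the representation $\rho^\lambda_\Theta$ is Anosov by Theorem \ref{thm:main}, whereas no $\rho_{\Theta'}|_{\Gamma_o}$ is Anosov (e.g.\ $\rho_{\Theta'}(T_1^2)$ has $-1$ as a double eigenvalue, so it is not loxodromic); but upgrading this to the full $4$-dimensional transversality still requires the Jacobian estimate above.
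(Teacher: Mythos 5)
Your overall strategy coincides with the paper's: parameterize the candidate representations by $(\zeta_t,\zeta_b,\lambda,g)\mapsto g\,\rho^{\lambda}_{\Theta_{(\zeta_t,\zeta_b)}}\,g^{-1}$, identify $\Rep(\Gamma_o,\HH)$ near $\rho_{\Theta_0}$ with the smooth $12$-dimensional set of pairs of traceless order-$3$ matrices (the paper's $\Psi^{-1}(0)$), and invoke the inverse function theorem. The difference is how the non-degeneracy of the differential is established. The paper does it by one explicit computation: it appends $\det(g)$ and takes the twelve off-diagonal entries of $\bigl(g\,\pi(A_{\Theta})\,g^{-1},\,g\,\pi(B^{\lambda}_{\Theta})\,g^{-1}\bigr)$ -- which are legitimate local coordinates on $\Psi^{-1}(0)$ by the proof of Lemma \ref{lemma:Tangentspace} -- and evaluates the resulting $13\times13$ Jacobian determinant in closed form, finding it nonzero precisely when $-1<\zeta_t,\zeta_b<1$ and $(\zeta_t,\zeta_b)\neq(0,0)$. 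You instead split $dF$ into the conjugation part and the parameter part and reduce everything to a $4\times4$ Jacobian of trace functions on the character variety; that is a reasonable, more structural reformulation, but it does not avoid the computation.

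The genuine gap is that this decisive $4\times4$ Jacobian is exactly where the content of the theorem lives, and you do not compute it: you explicitly ``grant'' its non-vanishing, so the theorem is not proved. Two supporting claims are also left unchecked. First, triviality of the centralizer of $\rho_{\Theta_0}(\Gamma_o)$: the irreducibility remark in Section \ref{sec:anosov} concerns $\rho_\Theta$ on the full group $\Gamma$, while you need it for the index-$2$ subgroup $\Gamma_o$ (or a direct check on $A_{\Theta_0}$ and $B^0_{\Theta_0}=D_{\Theta_0}^{-1}\,{}^{t}(A_{\Theta_0})^{-1}D_{\Theta_0}$, which you assert but do not perform); the paper's entry-coordinate computation sidesteps this entirely. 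Second, your choice of $\mathrm{tr}(\mathcal{A}\mathcal{B}),\ \mathrm{tr}(\mathcal{A}\mathcal{B}^{-1}),\ \mathrm{tr}(\mathcal{A}^{-1}\mathcal{B}),\ \mathrm{tr}([\mathcal{A},\mathcal{B}])$ as local coordinates on the $4$-dimensional character variety near $[\rho_{\Theta_0}]$ is itself an unproved claim: by Lawton's description the invariants of a pair of unimodular $3\times3$ matrices are generated by these together with $\mathrm{tr}(\mathcal{A}^{-1}\mathcal{B}^{-1})$, subject to a relation, so one must verify at the point in question that your four functions have independent differentials before the Jacobian criterion even makes sense. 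Finally, your ``conceptual indication'' (Anosov for $\lambda\in\IR$ versus non-Anosov on the Schwartz locus) does show that the $(\varepsilon,\delta)$-directions leave the Schwartz family, but it cannot replace the rank-$4$ statement -- for instance it does not separate the $\varepsilon$- and $\delta$-directions from each other -- so without the determinant computation the proposal delivers only the framework that the paper also sets up, not the conclusion.
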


\begin{proof}
It is enough to show that a map $$\Phi : {]\!-\!1, 1[}^2 \times \mathbb{R}^2 \times \mathrm{SL}(3,\mathbb{R}) \rightarrow \Psi^{-1}(0) \subset \mathrm{M}(3,\mathbb{R}) \times \mathrm{M}(3,\mathbb{R}) $$
given by
$\left( (\zeta_t, \zeta_b),\; \lambda,\; g \right) \mapsto (g \,\pi(A_{\Theta})\, g^{-1},\; g \,\pi(B_{\Theta}^{\lambda})\, g^{-1})$
is a local diffeomorphism at any point $p$ of
 $\mathcal{V} := {]\!-\!1, 1[}^2 \backslash \{(0,0)\} \times (0,0) \times \mathrm{Id}$.
Consider another map $\tilde{\Phi} : {]\!-\!1, 1[}^2 \times \mathbb{R}^2 \times \GL(3,\mathbb{R}) \rightarrow \mathbb{R}^6 \times \mathbb{R}^6 \times \mathbb{R}$ given by:
$$ \left( \left(\zeta_t, \zeta_b\right),\; \lambda,\; g \right) \mapsto \left( \left( \left(g \,\pi(A_{\Theta})\, g^{-1} \right)_{ij} \right)_{i \neq j}, \; \left( \left(g \,\pi(B^{\lambda}_{\Theta})\, g^{-1} \right)_{ij} \right)_{i \neq j}, \; \det(g) \right)$$
A computation shows that:
$$
\left| \det\left( D\tilde{\Phi}|_{( (\zeta_t, \zeta_b),\, \lambda,\, g) = ((\zeta_t, \zeta_b),\, (0,0),\, \mathrm{Id}) } \right) \right| = \tfrac{288 (1-\zeta_t^2 \zeta_b^2)^2 (2 - \zeta_t^2 - \zeta_b^2) \left( \zeta_t^2 (1 - \zeta_b^2) + \zeta_b^2 (1- \zeta_t^2)\right)}{ (1-\zeta_t^2)^5 (1-\zeta_b^2)^5 } \\
$$
which is not zero because $-1 < \zeta_t, \zeta_b < 1$ and $(\zeta_t, \zeta_b) \neq (0,0)$. As a consequence, the map $\tilde{\Phi}$ is a local diffeomorphism at any point $p$ of $\mathcal{V}$. Since the map $\tilde{\Psi}$ in the proof of Lemma \ref{lemma:Tangentspace} is a local diffeomorphism at $(\pi(A_{\Theta}), \pi(B_{\Theta}^{\lambda}))$ (and therefore at $(g\pi(A_{\Theta})g^{-1}, g\pi(B_{\Theta}^{\lambda})g^{-1}$) for every $\lambda \in \mathcal{R}$ and every convex marked box $[\Theta]$, the map $\Phi$ is also a local diffeomorphism at $p$.
\end{proof}

\section*{Appendix}

A matrix $A$ in $\GL(3,\mathbb{R})$ is a \emph{rotation of angle} $\theta$ if there exists a matrix $Q$ in $\GL(3,\mathbb{R})$ such that $Q^{-1} A Q = \mu R_{\theta}$, where $\mu \neq 0$ and
\begin{equation*}\label{rotation}
R_{\theta} = \left(\begin{array}{ccc}
                      1 & 0 & 0 \\
                      0 & \cos(\theta) & -\sin(\theta) \\
                      0 & \sin(\theta) & \cos(\theta) \\
                    \end{array}
\right).
\end{equation*}

\begin{lemma}\label{appendix}
Let $A$ be a rotation of angle $\theta$. Assume that $0 < \theta < \pi$ and
$B = G^{-1} \trans{A}^{-1} G$ for some $G \in \GL(3,\mathbb{R})$. Then $ \det ({\rm Id} - A B ) = 0$ if and only if there exists a symmetric matrix $S \neq 0$ such that $S B = \trans{A}^{-1} S$.
\end{lemma}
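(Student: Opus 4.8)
The plan is to strip away, by conjugation, everything that is not genuinely ``rotational'', reducing the statement to a concrete assertion about the model rotation $R_\theta$ together with a single invertible matrix, and then to finish by a direct determinant computation. Write $A = \mu\, Q R_\theta Q^{-1}$ with $\mu \neq 0$ and $Q \in \GL(3,\mathbb{R})$, and put $P := \trans{Q}\, G\, Q \in \GL(3,\mathbb{R})$. The point is that $R_\theta$ is orthogonal, so $\trans{R_\theta} = R_\theta^{-1} = R_{-\theta}$, whence $\trans{A}^{-1} = \tfrac1\mu\, \trans{Q}^{-1} R_\theta\, \trans{Q}$ --- note that the scalar $\mu$ has been \emph{inverted}. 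A short substitution then yields two identities in which the factors $\mu$ and $\mu^{-1}$ cancel: on the one hand $\det({\rm Id} - AB) = \det({\rm Id} - R_\theta P^{-1} R_\theta P)$; on the other hand, the map $S \mapsto S' := \trans{Q}\, S\, G^{-1}\, \trans{Q}^{-1}$ is a linear isomorphism on $3\times3$ matrices carrying the symmetric solutions $S$ of $SB = \trans{A}^{-1}S$ bijectively onto the matrices $S'$ that commute with $R_\theta$ and satisfy ``$S'P$ is symmetric''. Thus it suffices to prove: there is a nonzero matrix commuting with $R_\theta$ whose product with $P$ is symmetric if and only if $\det({\rm Id} - R_\theta P^{-1}R_\theta P) = 0$.

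Here the hypothesis $0 < \theta < \pi$ enters: it makes the eigenvalues $1, e^{\pm i\theta}$ of $R_\theta$ pairwise distinct, so the commutant of $R_\theta$ is the $3$-dimensional algebra $\mathbb{R}[R_\theta]$, namely the matrices with first row and column $(a,0,0)$ and lower-right $2\times2$ block $\left(\begin{smallmatrix} b & -c\\ c & b\end{smallmatrix}\right)$. For such a matrix $S'$, the skew part $S'P - \trans{(S'P)}$ is a skew $3\times3$ matrix depending linearly on $(a,b,c)$, and demanding its vanishing is a homogeneous system of three linear equations in $(a,b,c)$; since its target, the space of skew $3\times3$ matrices, is also $3$-dimensional, the system has a nonzero solution exactly when its $3\times3$ determinant $\Delta(P)$ is zero.

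It remains to compare $\Delta(P)$ with $\det({\rm Id} - R_\theta P^{-1}R_\theta P)$, and this is the only computational part. I would first rewrite the latter, conjugating by $P$ and peeling off one copy of $R_\theta$, as $\det({\rm Id} - R_\theta P^{-1}R_\theta P) = \det(R_{-\theta}P - P R_\theta)/\det P$. Expanding $\det(R_{-\theta}P - P R_\theta)$ along its first column (whose $(1,1)$-entry vanishes) produces $2(1-\cos\theta)\sin\theta$ times the cubic $(p_{32}-p_{23})(p_{12}p_{21}+p_{13}p_{31}) + (p_{22}+p_{33})(p_{13}p_{21}-p_{12}p_{31})$ in the entries $p_{ij}$ of $P$; a parallel expansion shows $\Delta(P)$ equals $\pm$ this same cubic. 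As $2(1-\cos\theta)\sin\theta \neq 0$ for $\theta \in \,]0,\pi[$ and $\det P \neq 0$, the two determinants vanish together, which combined with the reductions above proves the lemma.

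The main obstacle is precisely this last comparison: it is elementary but the bookkeeping is fiddly, and one must genuinely verify that the two $3\times3$ determinants are proportional, not merely that both are ``generically'' nonzero; care is also needed in the first step to keep the scalar $\mu$ out of the picture by using $\trans{R_\theta}=R_\theta^{-1}$. A slicker route worth attempting first --- though I would not rely on it --- would be to set up a coordinate-free isomorphism between $\mathbb{R}[R_\theta]$ and the space $\mathfrak{so}(3)$ of skew $3\times3$ matrices under which $S' \mapsto S'P - \trans{(S'P)}$ becomes, up to a nonzero scalar, the map $v \mapsto (R_{-\theta}P - P R_\theta)v$ on $\mathbb{R}^3$; then the equality of determinants would be automatic and no explicit expansion would be needed.
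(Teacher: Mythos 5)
Your proposal is correct and takes essentially the same route as the paper's appendix proof: conjugate to the model rotation $R_\theta$, identify its commutant (your $S'$ is exactly the paper's product $(\trans{Q}SQ)(\trans{Q}GQ)^{-1}$, and your $3\times 3$ system with determinant $\Delta(P)$ is precisely the paper's matrix $M$), then conclude by the same computation, namely $\det(R_{-\theta}P-PR_\theta)=2\sin(\theta)(1-\cos(\theta))\det M$ versus the paper's $2\sin(\theta)(1-\cos(\theta))\det(M)=\det(U)\det(\mathrm{Id}-R_\theta U^{-1}R_\theta U)$. The differences are purely bookkeeping, and your identity $\det(\mathrm{Id}-R_\theta P^{-1}R_\theta P)=\det(R_{-\theta}P-PR_\theta)/\det P$ is valid since $R_{-\theta}P-PR_\theta=R_{-\theta}\left(PR_{-\theta}-R_\theta P\right)R_\theta$.
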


\begin{proof}
By the assumption, there exists a matrix $Q$ in $\GL(3,\mathbb{R})$ such that $Q^{-1} A Q = \mu R_{\theta}$, and so $\trans{Q} \trans{A}^{-1} \trans{Q}^{-1} = \mu^{-1} R_{\theta}$. This implies that:
\begin{align*}
S B = \trans{A}^{-1} S & \Leftrightarrow S G^{-1} \trans{A}^{-1} G = \trans{A}^{-1} S \\
 & \Leftrightarrow (\trans{Q} S Q)  (\trans{Q} G Q)^{-1}  R_{\theta} (\trans{Q} G Q) = R_\theta  (\trans{Q} S Q)
\end{align*}
As a consequence, there exists a non-zero symmetric matrix $S$ satisfying
$SB  = \trans{A}^{-1} S $ if and only if there exists a symmetric matrix $P \neq 0$ such that:
 $$ P(\trans{Q} G Q )^{-1} R_{\theta}  =   R_\theta P(\trans{Q} G Q )^{-1} $$ It follows that $R_\theta$ commutes with $P (\trans{Q} G Q)^{-1}$, and therefore
\begin{equation*}
P = \left(\begin{array}{ccc}
                      \alpha & 0 & 0 \\
                      0 & \beta  & -\gamma \\
                      0 & \gamma  & \beta \\
                    \end{array}
\right) (\trans{Q} G Q) \quad \textrm{for some } \alpha, \beta, \gamma  \in \mathbb{R}.
\end{equation*}
If $U = (U_{ij})_{i,j = 1,2,3}$ denotes $\trans{Q} G Q$, then
we can write the equation $P - \trans{P} = 0$ as follows:
\begin{equation}\label{matrix}
\left(\begin{array}{ccc}
                      -U_{12} & U_{21} & -U_{31}          \\
                      -U_{13} & U_{31} & U_{21}          \\
                      0        & U_{32}-U_{23}   & U_{22}+U_{33} \\
                    \end{array}
\right)
\left(\begin{array}{c}
                      \alpha \\
                      \beta  \\
                      \gamma \\
                    \end{array}
\right)
= \left(\begin{array}{c}
                      0 \\
                      0  \\
                      0 \\
                    \end{array}
\right)
\end{equation}
Let $M$ be the left $3 \times 3$ matrix of Equation (\ref{matrix}). Then by a simple computation, we have:
\begin{align*}
2 \sin (\theta) (1 - \cos (\theta)) \cdot \det(M) & =  {\det}(U) \cdot \det({\rm Id} - R_{\theta} U^{-1} R_{\theta} U) \\
& =  {\det}(U) \cdot \det({\rm Id} - A G^{-1} \trans{A}^{-1} G)
\end{align*}
In the last step, we use the fact that:
$$
A G^{-1} \trans{A}^{-1} G = Q R_{\theta} U^{-1} R_{\theta} U Q^{-1}
$$
Finally, $\det(M)=0$ if and only if $\det({\rm Id} - A B)=0$. The result follows.
\end{proof}

\begin{remark}
One implication in Lemma \ref{appendix} is easier to prove without  computation. If $B = S^{-1} \trans{A}^{-1} S$ with $S$ an invertible symmetric matrix, then:
\begin{align*}
{\rm Id}-AB & = {\rm Id} -AS^{-1}\trans{A}^{-1} S \\
                 & = AS^{-1}  (SA^{-1} - \trans{A}^{-1} S)   \\
                 & =AS^{-1}  (SA^{-1} - \trans{(SA^{-1})})\quad \textrm{($S$ is symmetric)}
\end{align*}
Notice that $SA^{-1} - \trans{(SA^{-1})}$ is an anti-symmetric $3 \times 3$ matrix, which implies that:
$$\det({\rm Id} -AB)=0$$
\end{remark}


\end{document}